\documentclass[11pt]{amsart}

\setlength{\textwidth}{6in}
\calclayout

\usepackage[T1]{fontenc}
\usepackage[usenames]{color}
\usepackage{amsmath,amssymb,amsthm}
\usepackage{wrapfig}
\usepackage{tikz-cd}
\usepackage{verbatimbox}

\usepackage{enumerate}

\usepackage{caption}
\usepackage{mathrsfs}

\definecolor{PineGreen}{rgb}{0.0,0.47,0.44}
\definecolor{MidnightBlue}{rgb}{0.1,0.1,0.44}
\definecolor{magenta}{rgb}{1.0,0.0,1.0}
\definecolor{bl1}{HTML}{4479A1}
\definecolor{pur1}{HTML}{52196D}
\definecolor{mag1}{HTML}{2AD0F1}
\definecolor{org1}{rgb}{.92,.39.21}
\definecolor{pur2}{rgb}{.53,.47,.7}

\usepackage{hyperref}
\hypersetup{
  colorlinks=true,
  linkcolor=MidnightBlue,
  citecolor=PineGreen,
  filecolor=magenta,
  urlcolor=MidnightBlue
}

\makeatletter
\newcommand{\eqnum}{\refstepcounter{equation}\textup{\tagform@{\theequation}}}
\makeatother

\newtheorem{theorem}{Theorem}
\numberwithin{theorem}{section}
\newtheorem{proposition}[theorem]{Proposition}
\newtheorem{lemma}[theorem]{Lemma}
\newtheorem{corollary}[theorem]{Corollary}
\theoremstyle{definition}

\newtheorem{definition}[theorem]{Definition}
\newtheorem{convention}[theorem]{Convention}
\theoremstyle{remark}
\newtheorem{remark}[theorem]{Remark}
\newtheorem{example}[theorem]{Example}

\newcommand{\bbA}{\mathbb{A}}

\newcommand{\PP}{\mathbb{P}}
\newcommand{\pp}{\mathbb{P}}
\newcommand{\CC}{\mathbb{C}}
\newcommand{\ZZ}{\mathbb{Z}}

\newcommand{\pr}{pr}

\DeclareMathOperator{\codim}{codim}
\DeclareMathOperator{\segre}{s}

\begin{document}

\title{Segre class computation and practical applications}
\subjclass[2010]{14Qxx, 13Pxx, 13H15, 14C17, 14C20, 68W30, 65H10}
\author{ Corey Harris }
\author{ Martin Helmer }

\begin{abstract} 
Let $X \subset Y$ be closed (possibly singular) subschemes of a smooth projective toric variety $T$.
We show how to compute the Segre class $\segre(X,Y)$ as a class in the Chow group of $T$.
Building on this, we give effective methods to compute intersection products in projective varieties, to determine algebraic multiplicity \textit{without} working in local rings, and to test pairwise containment of subvarieties of $T$. Our methods may be implemented without using Gr\"obner bases; in particular any algorithm to compute the number of solutions of a zero-dimensional polynomial system may be used. 
\end{abstract}

\maketitle\vspace{-0.05in}
\section{Introduction}
Segre classes capture important enumerative and geometric properties of systems of polynomial equations coming from embeddings of schemes. Historically, these classes have played a fundamental role in the development of Fulton-MacPherson intersection theory \cite[\S6.1]{fulton2013intersection}. Computation of Segre classes (other than in a few special cases) has proven to be a challenge; this has limited the development of applications in practice. 

Evidence of the significance of Segre classes in algebraic geometry can be found in the fact that many important characteristic classes can be written as $C \frown s(X,Y)$, where $C$ is some polynomial in the Chern classes of vector bundles on $X$.  The flagship example of this is the topological Euler characteristic $\chi(X)$, which appears in the Chern-Schwartz-MacPherson class $c_{SM}(X)$. By results of Aluffi \cite{Aluffi2003c,aluffi2018chern} the class $c_{SM}(X)$ can be directly obtained by computing a Segre class. There are also formulas in terms of Segre classes for the Milnor class of a hypersurface \cite{Aluffi2003c}, the Chern-Mather class and polar degrees \cite{Piene1978}, and the Euclidean distance degree of a projective variety \cite{AH17}.

More generally, many enumerative problems end up in the situation of an \emph{excess intersection}, 
in which an intersection is expected to be finite but instead is the union of a finite set of points along with a positive-dimensional set.  Typically the desired quantity is then the number of points outside the positive-dimensional part. The Segre class gives a way to express the \emph{contribution} of this part, which is the difference between the expected number (e.g., the B\'ezout bound) and the actual number of points in the finite set.

Let $X \subset Y$ be closed subschemes of a smooth projective toric variety $T$. 
The Segre class of $X$ in $Y$ is a class $s(X,Y) \in A_*(X)$ in the Chow group of $X$. Since the group $A_*(X)$ is often unknown, the best we could hope for in general is to compute the pushforward of this class to $A_*(T)$.

Previous work on computing Segre classes of the form $s(X,\mathbb{P}^n)$ in $A_*(\pp^n)$ (i.e.,~the special case where $Y=T=\pp^n$) began with the paper \cite{Aluffi2003c} and 
alternative methods were developed in \cite{Eklund2013,helmer2016proj}. These methods were generalized to compute $s(X,T)$ in $A_*(T)$ in \cite{Moe2013,helmer2017toric}.  
In \cite{Harris2017} the scope was extended to compute the Segre class $s(X,Y)$ pushed forward to $A_*(\pp^n)$ for $X \subset Y \subset \pp^n$ and $Y$ a variety. The present work goes further by taking arbitrary subschemes not just of projective space, but of any smooth projective toric variety, to obtain $s(X,Y)$ in $A_*(T)$. 

The ability to effectively compute Segre classes in this new setting opens the way for several novel computational applications. For instance, computing Segre classes in products $T \times T$ allows for a general framework to compute intersection products in subvarieties of $T$.
While a big part of this paper is devoted to studying Segre classes, many of the resulting applications can be expressed without them.  
In particular, we show that algebraic multiplicity can be computed and pairwise containment of varieties can be tested by counting the number of points in a single zero-dimensional set.

\subsection{Examples}We now give three examples which illustrate how the results developed in this paper give rise to new methods to answer classical geometric and enumerative questions. The first example shows how the intersection product  can be computed using a Segre class. Following this we give examples which compute algebraic multiplicity and test ideal containment. The methods presented in the later two examples build on ideas developed to compute Segre classes (which we present in \S\ref{sec:MainResults}), but can be understood without them. 

\subsubsection{Intersection theory}
The \textit{intersection product} of varieties $X$ and $V$ in a non-singular variety $Y$, denoted $X\cdot_Y V$ (see \S\ref{sec:intersection-theory} for a definition), captures the behavior of the intersection $X\cap V$ inside of $Y$. 
It is a class in the Chow ring $A^*(Y)$. 
If $X$ meets $V$ transversely in the expected dimension, the intersection product may be defined by
\[X \cdot_Y V = [X \cap V] \in A^*(Y).  \]
If $X$ and $V$ do not meet dimensionally transversely, 
but there exist $X'$ and $V'$ which are respectively equivalent in $A^*(Y)$ and are dimensionally transverse,
then we define the intersection analogously: $ X \cdot_Y V = [X' \cap V'].$\\ \vspace{-1.4em}
\begin{wrapfigure}{r}{0.34\textwidth}
\vspace{0.7em}
\begin{center}
\includegraphics[scale=0.16]{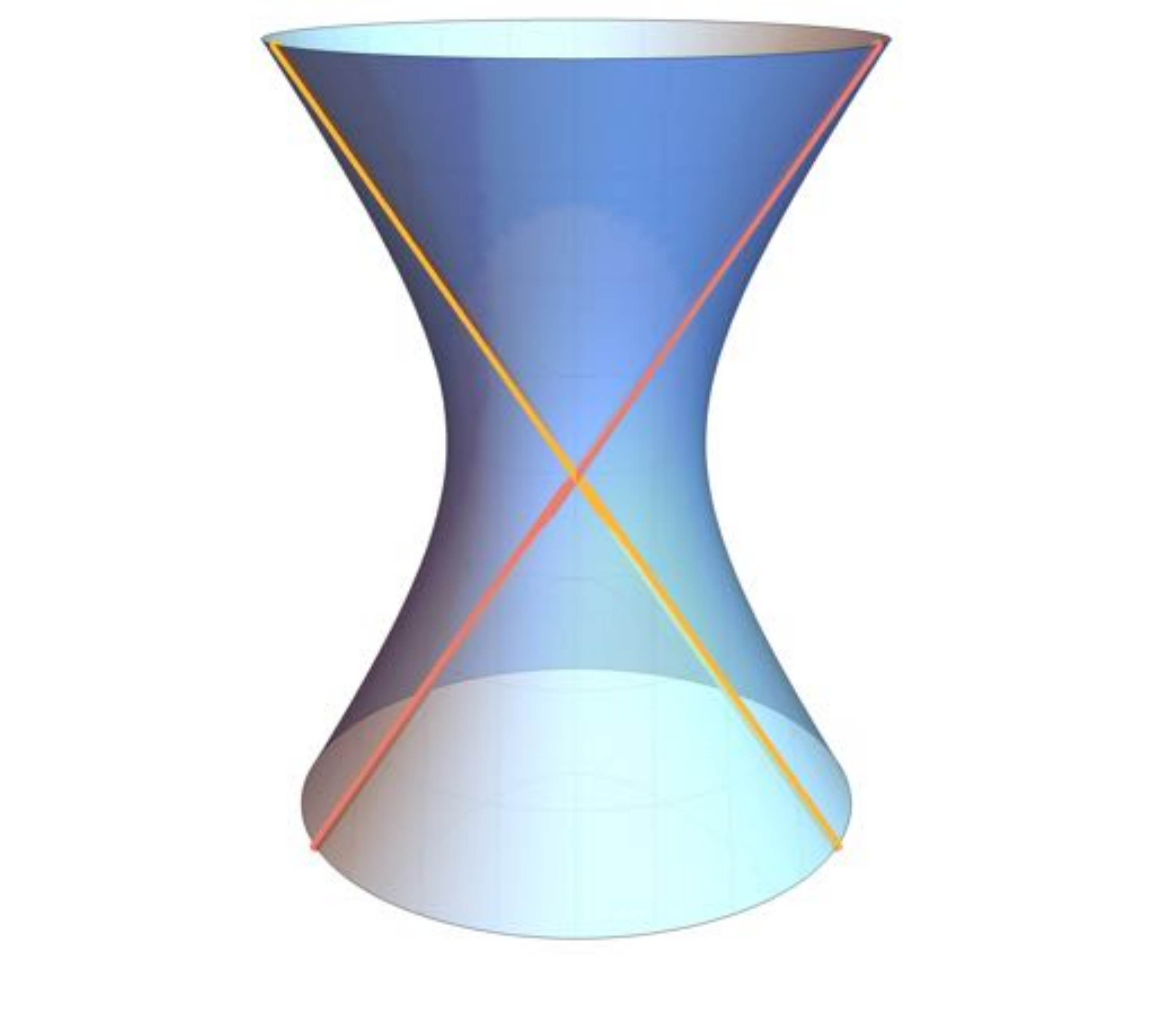}
\end{center}\caption{Lines on a quadric.\label{fig:quadric}}
\vspace{-1.5em}
\end{wrapfigure} 
\begin{example}[Lines on a quadric, I]\label{ex:quadricNaive}
The quadric surface $Q=\pp^1{\times}\pp^1 \subset \pp^3$ in Figure \ref{fig:quadric}, comes with two families of lines, $L_a = \{a\} \times \pp^1$ and $K_b = \pp^1 \times \{b\}$, and any two lines in the same family are equal in $A^*(Q)$. 
Let $L_a$ and $K_b$ be two such lines on $Q$.
We compute $L_a \cdot_Q K_b$ and $L_a \cdot_Q L_a$.

Since $L_a$ meets $K_b$ only at $(a,b)$ and the intersection is transverse of the expected dimension $\dim(L_a) - \left(\dim(Q) - \dim(K_b) \right) = 1 - (2-1) = 0$, we can conclude that $L_a \cdot_Q K_b = [pt]$ is the class of a point.  

In contrast, $L_a \cap L_a = L_a$ does not have the expected dimension, 
so we can try to ``move'' one of the terms by finding a suitable replacement with the same class in $A^*(Q)$.   
If $L_c$ is any other line, then $L_a \cap L_c$ is empty, and thus the intersection product is 0.
\end{example}
Another way to perform the computation above would be to recognize that $A^*(Q) \cong \ZZ[l,k]/\langle l^2,k^2 \rangle$ and $l = [L_a]$, $k = [K_b]$.  Then the product in this ring is the intersection product, i.e.~$l \cdot k = [pt] = L_a \cdot_Q K_b$ and $l^2 = 0 = L_a \cdot_Q L_a$.  However, when computing in practice, one may encounter a variety for which the Chow ring is not known a priori.  In this situation, using the methods of \S\ref{sec:MainResults}-\ref{sec:intersection-theory}, we could still arrive at the answer.
\begin{example}[Lines on a Quadric, II] \label{ex:quadricSegre}
Let $Q = \mathbb{V}(xy-zw) \subset \pp^3$ and forget that $Q$ is a smooth quadric.  
Given subvarieties $L$ and $K$ defined on $Q$ by $\{x=0=w\}$ and $\{y=0=w\}$, respectively, we wish to compute the intersection products $L \cdot_Q K$ and $L \cdot_Q L$.  Since we do not know the Chow ring, we must do something new. Specifically we work in the Chow ring of the ambient $\pp^3$, which is $A^*(\pp^3) \cong \ZZ[h]/\langle h^4\rangle$, and write the intersection product in terms of classes there.

In \S\ref{sec:intersection-theory}, we will see that a universal setup for this approach is to intersect $L \times K$ with the diagonal in $Q \times Q$.  Then the ingredients to our computation are:
\begin{enumerate}[(i)]
\item the \emph{Chern class} $i_* c(TQ) =  1 + 2h + 2h^2$, where $i:Q \hookrightarrow \pp^3$ is the inclusion,
\item the \emph{Segre class} $\Delta^* s(L \cap K, Q \times Q) = \Delta^*(h_1^3h_2^3) =  h^3$,
\item the \emph{Segre class} $\Delta^* s(L \cap L, Q \times Q) = \Delta^*(h_1^3h_2^2+h_1^2h_2^3-2h_1^3h_2^3) = h^2 - 2h^3$,
\end{enumerate}
where $\Delta: \pp^3 \to \pp^3 \times \pp^3$ is the diagonal map and $h_1,h_2$ are the hyperplane classes for each factor.
As above, we will frequently suppress obvious pushforwards.
Then, as follows from Theorem \ref{thm:intersectionProduct}, we confirm $L \cdot_Q K = \{(1 + 2h + 2h^2)  h^3\}_{exp.~dim.} = h^3 \in A^*(\pp^3)$ and $L \cdot_Q L =\{(1 + 2h + 2h^2) (h^2 - 2h^3)\}_{exp.~dim.} = 0$.  
\end{example}
\begin{remark} In Example \ref{ex:quadricSegre} we did not need to do any computations by hand.  Items (i)-(iii) can all be computed in a computer algebra system given the defining equations.  
Following \cite{Aluffi2003c}, 
$i_*c(TQ)$ is determined also by a Segre class (specifically the Segre class $i_*s(Q,\pp^3)$), which could already be found via the methods of \cite{Aluffi2003c,Eklund2013,helmer2016proj},
and computing items (ii) and (iii) are contributions of this paper.
\end{remark}

\subsubsection{Algebraic multiplicity}
Our second example illustrates how Segre classes can be used to compute the algebraic multiplicity of a local ring with respect to an ideal (without computing in the local ring). 

\begin{example}[Algebraic multiplicity along a component]\label{ex:algMult}
Let $R=\CC[x,y,z,w]$ be the homogeneous coordinate ring of $\PP^3$. 
As in \cite[Ex.~III.10]{sayrafi2017computations}, we consider the twisted cubic $X \subset \pp^3$ defined by the prime ideal
\[\mathcal{I}_X = \langle y w-z^{2},xw-y z,xz-y^{2} \rangle\] 
and the scheme $Y$ defined by the ideal 
\[\mathcal{I}_Y =\langle z(yw-z^2)-w(xw-yz), xz-y^2 \rangle.\] 
Then $X$ is a subscheme of $Y$, since $\mathcal{I}_Y \subset \mathcal{I}_X$. 
Let $\mathcal{O}_{X,Y}=(R/{\mathcal{I}_Y})_{\mathcal{I}_X}$ be the local ring of $Y$ along $X$. The \textit{algebraic multiplicity} of $Y$ along $X$ is the leading coefficient $e_XY$ of the Hilbert-Samuel polynomial associated to the local ring $ \mathcal{O}_{X,Y}$ (see \S\ref{subSection:AlgMult1}).
The multiplicity may be read off of the Segre class $s(X,Y)$ since $e_XY$ is also the coefficient of $[X]$ in the class $s(X,Y) \in A^*(X)$, see Definition \ref{def:eXY}. 
The class $[X]$ is $3h^2 \in A^*(\pp^3)$. Applying the methods of \S\ref{sec:MainResults} we find that
\[
s(X,Y)=6 h^2=2 (3h^2) \in A^*(\PP^3).
\]
Together this gives that {$e_XY=2$}. Let $d$ be the maximum degree among the defining equations of the ideals $\mathcal{I}_X$ and $\mathcal{I}_Y$ and let $g$ be the \emph{dimension-$X$ projective degree} defined by $X$ in $Y$ (see \S\ref{subsec:ProjectiveDegrees}). More directly, by Theorem \ref{theorem:eXY_PPn} we have that 
\[
e_XY=\frac{\deg(Y)d^{\dim(Y)-\dim(X)}-g}{\deg(X)}=\frac{6\cdot 3^0 -0}{3}=2.
\]
\end{example}
\subsubsection{Containment}Our third example demonstrates a new criterion to test containment of varieties
or of irreducible components of schemes (see \S\ref{sec:GBFreeContainment}).
\begin{example}[Containment of Varieties]
Work in $\pp^6$ with coordinates $x_0,\dots,x_6$ and let
\[
  I=\text{the ideal defined by all } 3{\times}3 \text{ minors of}
  \begin{pmatrix}
    {x}_{0}&
      {x}_{1}&
      {x}_{2}&
      {x}_{3}\\
      3 {x}_{3}&
      4 {x}_{4}&
      5 {x}_{5}&
      6 {x}_{6}\\
      {x}_{2}&
      {x}_{3}&
      {x}_{4}&
      {x}_{5}\\
      {x}_{0}+5 {x}_{1}&
      {x}_{1}+6 {x}_{2}&
      {x}_{2}+7 {x}_{3}&
      {x}_{3}+8 {x}_{4}\\
    \end{pmatrix}.
  \]
  The variety $Y=\mathbb{V}(I)$ is an irreducible singular surface of degree 20 in $\pp^6$.
  
Now set $K=\langle x_0,x_1,x_2,x_3,x_4 \rangle$ and $X=\mathbb{V}(K)$. We seek to determine if the line $X$ is contained in the singular locus ${\rm Sing}(Y)$ of the surface $Y$. The standard method to test for this containment is to compute the ideal $J$ defining ${\rm Sing}(Y)$ and reduce each generator of $J$ with respect to a Gr\"obner basis for $K$. In this case the ideal $J$ is defined by the $4\times 4$ minors of the Jacobian matrix of $I$; it is clear from the structure of $I$ (it has $16$ generators of degree three in six variables) that the computation of the minors to obtain the ideal $J$ will be very time consuming.

On the other hand, by Corollary \ref{cor:contSingLocusPn} we have that $X\subset {\rm Sing}(Y)$ if and only if
\[ 
  \frac{20\cdot 3^{2-1}-g}{1}>1
\]
where the left-hand side is $e_XY$ (computed via Theorem \ref{theorem:eXY_PPn}), and $g$ is the {dimension-$X$ projective degree} of $X$ in $Y$ (see \S\ref{subsec:ProjectiveDegrees}). Using Theorem \ref{MainTheorem1_Multi_proj} we compute $g=58$ by finding the number of solutions to a single zero-dimensional system of polynomials, with each polynomial of degree at most three. Substituting this in we have that $e_XY=2>1$ and, hence, $X\subset {\rm Sing}(Y)$. The computation of the integer $g=58$ takes approximately $0.07$ seconds using Macaulay2 \cite{M2} on a laptop. 
      
We note again that the test described above using Corollary \ref{cor:contSingLocusPn} does \textit{not} compute the ideal defining ${\rm Sing}(Y)$. In this case computing the ideal $J$ and using  Gr\"obner basis methods to test if $J\subset K $ takes approximately 692 seconds using Macaulay2 \cite{M2} on the same test machine (the majority of this time, about 690 seconds, is spent computing the ideal $J$). 
\end{example}

This paper is organized as follows. In Section \ref{section:background} we establish our notation and conventions and present relevant background on Segre classes and projective degrees. The main results of this paper are presented in Section \ref{sec:MainResults}. In \S\ref{subSec:SegreMultiProj} we consider the case where $X$ is a subscheme of an irreducible scheme $Y\subset \pp^{n_1}\times \cdots \times \pp^{n_m}$. In Theorem \ref{MainTheorem2MultiProj} we give an explicit formula for the Segre class $s(X,Y)$ in terms of the projective degrees of a rational map defined by $X$ from $Y$ to a projective space (see \S\ref{subsec:ProjectiveDegrees}). In Theorem \ref{MainTheorem1_Multi_proj} we give an expression for these projective degrees as a vector space dimension of a ring modulo a certain zero-dimensional ideal. These results are generalized in \S\ref{subsec:SegSubToric} to the case where $Y$ is a subscheme of a smooth projective toric variety $T$. 

In Section \ref{sec:intersection-theory} we show how the results of Section \ref{sec:MainResults} can be applied to compute intersection products.  If $Y \subset \pp^n$ is a smooth variety, $X \cdot_Y V$ requires computing $s(X \cap V, Y \times Y)$. The main result is that the pushforward of this class to $\pp^n$ is enough to recover the pushforward of the intersection product.

In Section \ref{subSection:AlgMult1} we use the results of \S\ref{subSec:SegreMultiProj} to give an explicit expression for $e_XY$, the algebraic multiplicity of $Y$ along $X$, in terms of ideals in the coordinate ring of $ \pp^{n_1}\times \cdots \times \pp^{n_m}$. The expressions are also generalized to $Y\subset T$. 

Finally in Section \ref{sec:GBFreeContainment} we combine the results from
\S\ref{subSection:AlgMult1} with a classical result of Samuel
\cite{samuel1955methodes} to yield new numerical tests for the containment of one
variety in another. Let $X$ and $Y$ be arbitrary subvarieties of a smooth projective toric
variety $T$. In \S\ref{subsec:subVarInSingLocus} we give a simple criterion
to determine if $X$ is contained in the singular locus of $Y$ \textit{without}
computing the defining equations of the singular locus. 
In \S\ref{subsec:ContainmentTest} we give a criterion to determine if
$X\subset Y$. 
As in the previous results, computing a Gr\"obner basis is not required and methods
from numerical algebraic geometry could be used. To the best of our knowledge, this is the
first general purpose method which is able to test containment of possibly singular varieties using only numeric methods. 

 As of version 1.13, Macaulay2 \cite{M2} contains the {\tt SegreClasses} package, which implements many of the results described in this paper.

\section{Background}\label{section:background}In this section we review several definitions and explicitly state the notations and conventions we will use throughout the paper. 
\subsection{Notations and conventions}\label{subsec:notationConvention}
We work throughout over an algebraically closed field $k=\bar k$ of characteristic
zero. 
\subsubsection{Varieties, schemes, and irreducible components}
Since we always work in an ambient projective variety, all schemes will be of finite type over the base field.
By \emph{variety} we mean a reduced and irreducible, separated scheme, that is,
an irreducible algebraic set.
Given polynomials $f_1,\dots,f_r$ we let $\mathbb{V}(f_1,\dots,f_r)$ denote the
algebraic set defined by $f_1=\cdots=f_r=0$. 
Conversely, if $Y$ is a subscheme of a smooth variety $Z$ with coordinate ring
$R$, we will let $\mathcal{I}_Y$ be the ideal in $R$ defining the scheme $Y$ and let $\sqrt{\mathcal{I}_Y}$ be the radical
ideal in $R$ defining the reduced scheme $Y_{\mathrm{red}}$.
If $Y$ is a subscheme of a smooth variety defined by an ideal $\mathcal{I}_Y$,
its 
\emph{primary components} are the schemes associated to the primary components of $\mathcal{I}_Y$;
its \textit{irreducible components} are the varieties defined
by the associated primes of $\mathcal{I}_Y$.

\subsubsection{Chow classes} 
Let $Y$ be a subscheme of a smooth variety $Z$. The irreducible components $Y_i$ of $Y$ have associated geometric multiplicity $\mathfrak{m}_i$ given by the length of the local ring $\mathcal{O}_{Y_i,Y}$, and we write $[Y] = \sum_{i=1}^t \mathfrak{m}_i [Y_i] $ for the rational equivalence class of $Y$ in $A_*(Y)$.  We frequently write $[Y] \in A_*(Z)$ to mean the pushforward via inclusion.
For a cycle class $\beta$ in
the Chow group $A_*(Z)$ we will use the notation $\int \beta$ to denote the degree of the zero-dimensional
part of $\beta$ (as in Definition 1.4 of Fulton \cite{fulton2013intersection}).
The degree of a zero-dimensional scheme $W$ is $\deg(W) = \int [W]$.  
For instance, if $W \subset \pp^2$ is defined by  $\langle x^2,y^2 \rangle$  then $\deg(W)=4$. 

\subsubsection{The total coordinate ring of a toric
  variety}\label{subsubsec:CoxRing} Let $T_{\Sigma}$ be a smooth projective
toric variety defined by a fan $\Sigma$ and let $\Sigma(1)$ denote the rays in the fan.
The \textit{Cox ring} of $T_{\Sigma}$ is $R=k[x_{\rho} \; | \; \rho \in \Sigma(1) ] $.
The ring $R$ can be graded by defining the \textit{multidegree} of a monomial $ \mathsf{x}=\prod_{\rho \in
\Sigma(1)}x_{\rho}^{a_{\rho}} $ to be $ \left[ \mathbb{V}\left( \mathsf{x} \right) \right] \in
A^1(T_{\Sigma})$, where $A^1(T_{\Sigma})$ denotes the codimension-one Chow group of $T_\Sigma$. Setting $ R_{\alpha}=\bigoplus_{\mathsf{x}:\;\left[ \mathbb{V}\left( \mathsf{x} \right) \right] =\alpha} k \cdot \mathsf{x}$ we have
that $R=\bigoplus_{\alpha \in A^1(T_{\Sigma})} R_{\alpha}.$ We say that a
polynomial $f\in R$ is \textit{homogeneous} if it is homogeneous with respect to
this grading, i.e., if all monomials in $f$ have the same multidegree. An ideal in $R$ is
called homogeneous if it is generated by homogeneous polynomials. When $T_\Sigma=\pp^n$ the Cox ring is simply the standard graded coordinate ring of $\pp^n$ and the multidegree of a monomial is simply the total degree of the monomial multiplied by the class of a general hyperplane. More details can be found in the book \cite{cox2011toric}. 
\subsubsection{Homogeneous generators}\label{subsubsec:HomGens}
Let $X \subset T_\Sigma$ be a closed subscheme.
We can always find an ideal $I = \langle f_0, \dots, f_r \rangle \subset R$ defining $X$ so 
that $[\mathbb{V}(f_i)]=\alpha$ for all $i$, for some fixed $\alpha$ in
$A^1(T_\Sigma)$ (see \cite[6.A]{cox2011toric}).
Using this set of generators we see that $X$ is the base scheme of the linear system defined by
$f_0,\dots, f_r$, viewed as sections of $\mathcal{O}(\alpha)$. 
\begin{definition}\label{def:alpha-homogeneous}
We say a homogeneous polynomial $f_i$ in the Cox ring of a smooth projective toric variety $T_\Sigma$ has \emph{multidegree} $\alpha=[\mathbb{V}(f_i)]\in A^1(T_{\Sigma})$ and say a set of polynomials $f_0,\dots,f_r$ all having the same multidegree $\alpha$ is \emph{$\alpha$-homogeneous}.
\end{definition}
\begin{convention}
Let $X$ be a subscheme of $T_\Sigma$ defined by an ideal $I$ generated by polynomials $f_0,\dots, f_r$, we assume (without loss of generality) that this set of polynomials is $\alpha$-homogeneous. We will use this convention for the defining equations of \textit{all} subschemes/subvarieties considered in this paper unless otherwise stated.
\end{convention}

\begin{remark}
In the case where $T_\Sigma=\PP^n$ we are simply assuming that a given set of polynomial generators have the same degree. If we are given an ideal in the coordinate ring of $\pp^{n_1}\times\cdots \times \pp^{n_m}$ where the generators do not have the same multidegree we may construct a new ideal which has $\alpha$-homogeneous generators and defines the same scheme as follows. 

Work in $\pp^{n_1}\times\cdots \times \pp^{n_m}$ with multi-graded coordinate ring 
$R=k[x^{(1)},\dots,x^{(m)}]$, where $x^{(j)}=x_1^{(j)},\dots,x_{n_j}^{(j)}$. In this case the Chow group
$A^1(\pp^{n_1}\times\cdots \times \pp^{n_m})$ is generated by $h_1,\dots, h_m$ where $h_i$ is the 
pullback of the hyperplane class in the factor $\pp^{n_i}$. The multidegree of a monomial
$\mathbf{x}^{\mathbf{a}}$ in $R$ has the form $d_1h_1+\cdots +d_mh_m$ where $d_i$ is the total degree of
$\mathbf{x}^{\mathbf{a}}$ in the variables $x^{(i)}$. Let $B_j=\left\langle x_1^{(j)},\dots,x_{n_j}^{(j)}\right\rangle$, so that the irrelevant ideal of $R$ has
primary decomposition $B=B_1\cap \cdots \cap B_m$,
and for $d\in \mathbb{N}$, let $B_j(d)$ be the ideal generated by the $d$-th
powers of the generators of $B_j$.

Take a subscheme $X\subset T_\Sigma$ defined by a homogeneous ideal $I= \langle w_1,\dots, w_l \rangle$ in $R$ with the
generator $w_i$ having multidegree $d^{(i)}_1h_1+\cdots+d^{(i)}_mh_m$.
Let $D_i = \max_{1 \leq j \leq l} (d^{(j)}_i)$ for $1 \leq i \leq m$.
We can construct a new ideal $J$ for $X$ with generators all having multidegree $D = D_1h_1 + \cdots+ D_m h_m$; the ideal $J$ is given by:
  \[
 J= \sum_{i=1}^m \langle w_i \rangle \cdot B_1(D_1-d^{(i)}_1)\cdots B_m(D_m-d^{(i)}_m).
\]Note that in the equation above we use summation notation for the sum of {ideals} in $R$.
The reader can verify that $I:B^\infty = J:B^\infty$, meaning that $I$ and $J$ define the same subscheme. \label{remark:SameMultiDegreeGens}
\end{remark}

\begin{example}
The procedure discussed in Remark \ref{remark:SameMultiDegreeGens} above is easy to apply by hand.  
Work in $\pp_x^2\times \pp_y^3$ with coordinate ring $R=k[x_0,\dots,x_2,y_0,\dots,y_3]$ and codimension-one Chow
group $A^1(\pp_x^2\times \pp_y^3)$ generated by the hyperplane classes $h_x,h_y$.
The irrelevant ideal of the coordinate ring $R$ is $B= \langle x_0,x_1,x_2 \rangle \cdot \langle y_0,y_1,y_2,y_3 \rangle.$

Consider the ideal
$I=\langle x_0x_1^2y_1-x_2^3y_3,x_2y_2^2-x_1y_0y_1 \rangle$ defining a scheme $X$.
The generators of $I$ have multidegrees $3h_x+h_y$ and $h_x+2h_y$, respectively.
The new ideal $J$ will have generators all of multidegree $3h_x + 2h_y$:
\[ J = \langle  x_0x_1^2y_1-x_2^3y_3 \rangle \cdot \langle y_0,y_1,y_2,y_3 \rangle +
                   \langle x_2y_2^2-x_1y_0y_1 \rangle \cdot \langle x_0^2, x_1^2, x_2^2 \rangle\\
\]
and the ideal $J$ also defines the scheme $X\subset \pp_x^2\times \pp_y^3$ since $I:B^\infty=J:B^\infty$.
\end{example}

To construct an $\alpha$-homogeneous set of generators from a given set of homogeneous generators for a subscheme of an arbitrary smooth projective toric variety, a technique similar to that of Remark \ref{remark:SameMultiDegreeGens} can be used. Instead of multiplying by powers of the generators of components of the irrelevant ideal, we multiply the given generators by powers of the generators of products of components of the irrelevant ideal of the Cox ring. Since the fan of a general smooth projective toric variety is more combinatorially complicated than that of a product of projective spaces, the procedure is also more difficult to write down, but is otherwise similar.

\subsubsection{Multi-indices}
We will make frequent use of standard multi-index notations throughout the paper.  In particular
for a non-negative integer vector $a=(a_1,\dots,a_m)$, we have $|a|=a_1{+}\cdots{+}a_m$, and if $x_1,\dots,x_m$
are the variables of a ring, we write $x^a = x_1^{a_1} \cdots x_m^{a_m}$.  Notice that if $h_1,\dots,h_m$ are
generators of the Chow ring $A^*(\pp^{n_1}\times\cdots\times\pp^{n_m})$, then $h^a$ is a class of
codimension $|a|$.
\subsection{Segre classes}
In this subsection we define the Segre class of a subscheme and summarize computational methods for Segre classes $s(X,Y)$ in the Chow ring $A^*(\pp^n)$.

For $X$ a subscheme of a scheme $Y$ the Segre class $s(X,Y)=s(C_XY)$ is the Segre class of the normal cone $C_XY$ to $X$ in $Y$ (see \cite[\S4.2]{fulton2013intersection} for more details). In the case where $Y$ is a variety we may define the Segre class via Corollary 4.2.2 of \cite{fulton2013intersection}. 
\begin{definition}
  Let $X$ be a closed subscheme of a variety $Y$.  We have a blowup diagram
  \[\begin{tikzcd}
    E \ar[d,"\eta"] \ar[r] \arrow[dr, phantom, "\square"] & \mathrm{Bl}_X Y \ar[d,"\pi"] \\
    X \ar[r] & Y \rlap{\ ,}
    \end{tikzcd} \]
    where $E$ is the exceptional divisor. The \emph{Segre class} of $X$ in $Y$ is
  \[
    s(X,Y) = \eta_*((1 - E + E^2 - \dots) \frown [E]) \in A_*(X),
  \]
  see \cite[Corollary~4.2.2]{fulton2013intersection}.  
  When $ Y$ is contained in some smooth projective toric variety $T_\Sigma$ we will frequently abuse
  notation and write $s(X,Y)$ for the pushforward to $A^*(T_\Sigma)$. \label{def:Segre}
\end{definition}
We now review the computation of the class $s(X,Y) \in A^*(\pp^n)$ in the special case where $X$ is a
subscheme of a projective variety $Y \subset \pp^n$. In this case we may think of $X$ as the base scheme
of an $(r+1)$-dimensional linear system of global sections of $\mathcal{O}_Y(d)$. In practice this means
choosing a set of $r+1$ (scheme-theoretic) generators for $ X$ all of the same degree $d$, say $f_0,\dots,f_r
\in k[x_0,\dots,x_n]$.
Consider the graph
\begin{equation}
\begin{tikzcd}
  & \mathrm{Bl}_X Y \ar[dr,"\rho"] \ar[dl,swap,"\pi"] \arrow[draw=none]{r}[sloped,auto=false]{\subset} & \pp^n \times \pp^r \\
  Y \ar[rr,dashed,"\pr_X"] & & \pp^r 
\end{tikzcd}\label{eq:SegreDiagBackGrd}
\end{equation} 
of the rational map $\pr_X : Y \dashrightarrow \pp^r$ defined by $\pr_X :p\mapsto (f_0(p):\cdots: f_r(p))$.
\begin{definition} We call $pr_X$ the \emph{projection of $Y$ along $X$}.
The \emph{projective degrees} of $\pr_X$ are the non-negative integers
\[
  g_i(X,Y)= \int h^i \cdot \left[\overline{\pr_X^{-1}(\pp^{r-(\dim(Y)-i)}) - X} \right],
\]
where $h$ is the hyperplane class in the Chow ring $A^*(\pp^n)\cong \ZZ[h]/\langle h^{n+1} \rangle$. 
\label{def:projDegreesPPn}
\end{definition}

\begin{proposition}[{\cite[Prop. 5]{Harris2017}}]
  \label{prop:segreFormulaPPn}
  The Segre class $s(X,Y) \in A^*(\pp^n)$ is given by
  \[ s_i = \sum_{j=0}^{\dim X} \binom{\dim Y - i}{j - i} (-d)^{j-i}(d^{\dim Y}\deg(Y) - g_j(X,Y)). \]
\end{proposition}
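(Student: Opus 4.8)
The plan is to run the ``projective degrees'' computation on the blow-up that already appears in Definition \ref{def:Segre}, and then to invert a triangular linear system. Write $\pi \colon \tilde Y = \mathrm{Bl}_X Y \to Y$ for the blow-up with exceptional divisor $E$, set $m = \dim Y$, and let $H = \pi^* h$ be the pullback of the hyperplane class of $\pp^n$ and $\hat H = \rho^*\mathcal{O}_{\pp^r}(1)$ the pullback of the hyperplane class of $\pp^r$ along the map $\rho$ in diagram \eqref{eq:SegreDiagBackGrd}. The single geometric input is that, because $X$ is the base scheme of the linear system spanned by the degree-$d$ forms $f_0,\dots,f_r$, the map $\rho$ is resolved by the blow-up and $\pi^*\mathcal{O}_Y(d) = \mathcal{O}(E)\otimes \rho^*\mathcal{O}_{\pp^r}(1)$, i.e.\ $\hat H = dH - E$. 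Everything else is a formal consequence of this identity together with Definition \ref{def:Segre}.

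First I would rewrite the Segre class in terms of $E$ alone. Pushing the formula of Definition \ref{def:Segre} forward along $\pi$ and using the self-intersection identity $j_*\!\big(c_1(\mathcal{O}_E(E))^{k-1}\frown[E]\big) = E^k$ for $j\colon E \hookrightarrow \tilde Y$ gives $s(X,Y) = \sum_{k\ge 1}(-1)^{k-1}\pi_*(E^k)$ in $A_*(\pp^n)$. The crucial point is that every $E^k$ with $k\ge 1$ is supported on $E$, so each $\pi_*(E^k)$ is supported on $X$; capping with $H^{m-k}$ and applying the projection formula then shows $\pi_*(E^k) = \big(\int_{\tilde Y} H^{m-k}E^k\big)\,h^{\,n-m+k}$. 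Reading off the dimension-$i$ part yields $s_i = (-1)^{m-i-1}\int_{\tilde Y} H^i E^{m-i}$ for $0\le i \le \dim X$.

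Next I would identify the projective degrees as intersection numbers on $\tilde Y$. A generic codimension-$(m-j)$ linear subspace of $\pp^r$ pulls back under $\pr_X$ to the intersection of $m-j$ degree-$d$ hypersurfaces with $Y$, so its \emph{total} degree against $h^j$ is the expected value $d^{\,m-j}\deg(Y)$, while passing to the closure in Definition \ref{def:projDegreesPPn} (removing the base scheme $X$) leaves exactly $g_j(X,Y) = \int_{\tilde Y} H^j \hat H^{\,m-j}$. Substituting $\hat H = dH - E$, expanding, and collecting the numbers $\int_{\tilde Y} H^{\,m-k}E^k$ (equal to $\deg(Y)$ for $k=0$ and to $(-1)^{k-1}s_{m-k}$ for $k\ge 1$ by the previous paragraph) gives the triangular system
\[
 d^{\,m-j}\deg(Y) - g_j(X,Y) = \sum_{l\ge j}\binom{m-j}{l-j}\,d^{\,l-j}\,s_l .
\]
Finally I would invert it: multiplying the $j$-th equation by $\binom{m-i}{j-i}(-d)^{j-i}$ and summing over $j$, the coefficient of each $s_l$ is $d^{\,l-i}\sum_{p}\binom{m-i}{p}\binom{m-i-p}{l-i-p}(-1)^p$, which by the subset-of-subset identity $\binom{N}{p}\binom{N-p}{q-p}=\binom{N}{q}\binom{q}{p}$ collapses to $d^{\,l-i}\binom{m-i}{l-i}(1-1)^{l-i}$, hence vanishes unless $l=i$. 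This isolates $s_i$ as the asserted alternating sum of the corrected degrees $d^{\,\dim Y - j}\deg(Y) - g_j$.

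The main obstacle is the middle step: one must justify rigorously that the ``$-X$'' (closure) in the definition of the projective degrees is \emph{exactly} the passage from the total transform to the intersection number $\int_{\tilde Y}H^j\hat H^{\,m-j}$, with no leftover excess from components of $\rho^{-1}(\text{linear subspace})$ lying over $X$. This is where the genericity of the linear subspace, together with the fact that any such components map into $X$ and are therefore annihilated after capping with $H^j$ for $j\le \dim X$, must be invoked. The matching of $\int_{\tilde Y}H^{\,m-k}E^k$ to the Segre coefficients and the final binomial inversion are then routine bookkeeping.
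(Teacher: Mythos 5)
Your outline is the standard blow-up argument, and it is essentially the right one: the paper itself does not prove this proposition (it imports it from \cite[Prop.~5]{Harris2017}), but your three ingredients --- the identity $\rho^*\mathcal{O}_{\pp^r}(1)=dH-E$, the identification of Segre coefficients and projective degrees as intersection numbers on $\mathrm{Bl}_XY$, and a triangular system solved by binomial inversion --- are exactly the mechanism behind both the cited result and the paper's own generalization, Theorem \ref{MainTheorem2MultiProj}, which replaces your explicit inversion by a recursion. The genericity issue you flag in the middle step is precisely what the paper disposes of in \S\ref{subsec:ProjectiveDegrees} (the identity $\pi_*(K^{i}\cdot[\Gamma])=[\overline{\pr_X^{-1}(\pp^{r-(\dim(Y)-i)})-X}]$) and Remark \ref{remark:GeneralChoiceOFLinSpace}, and your inversion identity is correct.

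There is, however, a discrepancy you did not notice: what your computation yields is
\[
s_i=\sum_{j}\binom{\dim Y-i}{j-i}(-d)^{j-i}\left(d^{\dim Y-j}\deg(Y)-g_j(X,Y)\right),
\]
with exponent $\dim Y-j$, whereas the statement as printed has the $j$-independent factor $d^{\dim Y}\deg(Y)$; these differ by $\sum_j\binom{\dim Y-i}{j-i}(-d)^{j-i}\bigl(d^{\dim Y}-d^{\dim Y-j}\bigr)\deg(Y)$, which is not zero, so your closing claim that you obtained ``the asserted'' sum is inaccurate. A careful check shows your version is the correct one under Definition \ref{def:projDegreesPPn}. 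For instance, take $Y=\pp^3$ and $X=\mathbb{V}(x_0,x_1)$ generated by the seven quadrics spanning $\langle x_0,x_1\rangle\cdot\langle x_0,x_1,x_2,x_3\rangle$, so $d=2$: two general members meet in the line plus a residual twisted cubic, giving $g_1=3$, while $s(X,\pp^3)=h^2-2h^3$, so $s_1=1$; your formula gives $4-3=1$, the printed one gives $8-3=5$. Your version is also what the paper's own machinery forces: by \eqref{eq:Lambda} the initial data of the recursion in Theorem \ref{MainTheorem2MultiProj} is $\Lambda_j(X,Y)=d^{\dim Y-j}\deg(Y)-g_j(X,Y)$, and unwinding that recursion reproduces exactly your triangular system and hence your closed form (note also that $\Lambda_j=0$ for $j>\dim X$, which is why the truncation of the sum at $\dim X$ is harmless in your version but not in the printed one). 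So: your proof is correct, but it proves a corrected statement; the proposition as printed carries a typo ($d^{\dim Y}$ for $d^{\dim Y-j}$), and a complete answer should have flagged the mismatch explicitly rather than asserting agreement.
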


In \cite[Theorem 4.1]{helmer2016proj}, the projective degrees of a rational map $\Phi: \pp^n \dashrightarrow \pp^r$
are expressed as the dimensions of a sequence of finite-dimensional $k$-algebras.  There is an analogous result
for our more general situation.  The projective degrees $g_i(X,Y)$ for $\pr_X: Y \dashrightarrow \pp^r$ can be computed
directly from the definition above
as the degree of the 0-dimensional variety
\[
 Z_i=(Y \cap L^{i} \cap\mathbb{V}(P_1,\dots,P_{\dim(Y)-i})) - X
\]
where $L^{i}$ is a general linear space of codimension $ i$
in $\pp^n$ and $P_j$ is a general $k$-linear combination of $\alpha$-homogeneous generators of $\mathcal{I}_X$. Note $\mathbb{V}(P_1,\dots,P_{\dim(Y)-i}))=\pr_X^{-1}(\pp^{r-(\dim(Y)-i)})$ by construction.

We now move to $\pp^n \times \bbA^1$ and restrict to the affine open
subset $D(L')$ defined by the non-vanishing of a general linear form $L'$.
Further, we want to remove $X$, so we restrict to $D(L') \cap D(F')$ 
where $F'$ is a general $k$-linear combination of $\alpha$-homogeneous generators for $\mathcal{I}_X$.
The resulting affine variety $$((Y \cap L^{i} \cap \mathbb{V}(P_1,\dots,P_{\dim Y -i})) \times \bbA^1) \cap D(F') \cap D(L')$$ is in
1-1 correspondence with the set of points $Z_i$.  This leads to
the following expression for the projective degrees.

\begin{proposition}[cf.~{\cite[Theorem 4.1]{helmer2016proj}}]
  \label{prop:projDegreesPPn}
  The projective degrees $g_i$ of $\pr_X$ are given by
  \[ g_i(X,Y) = \dim_k \frac{k[x_0,\dots,x_n,T]}{\mathcal{I}_Y + \mathcal{I}_{L^{i}} +  \langle P_1,\dots, P_{\dim(Y)-i}, 1 - T \cdot F', 1 - L' \rangle }, \]
 where $L'$ is a general affine linear form in the $x_i$ and $F'$ is a general $k$-linear combination of $\alpha$-homogeneous generators for $\mathcal{I}_X$.
\end{proposition}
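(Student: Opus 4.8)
The plan is to exhibit the quotient ring in the statement as the coordinate ring of the affine scheme constructed immediately before the proposition, and then to show that this ring is a finite-dimensional $k$-algebra whose dimension is exactly $\deg(Z_i) = g_i(X,Y)$. Since the $k$-dimension of the coordinate ring of a zero-dimensional scheme is its total length, the whole argument reduces to matching lengths under the passage from the projective count to the affine model.

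I would begin from the starting point supplied by the discussion preceding the proposition: for a general linear space $L^i$ of codimension $i$ and general $\alpha$-homogeneous combinations $P_1,\dots,P_{\dim(Y)-i}$, the scheme $Z_i = (Y \cap L^i \cap \mathbb{V}(P_1,\dots,P_{\dim(Y)-i})) - X$ is zero-dimensional and $g_i(X,Y) = \deg(Z_i) = \sum_{p} \mathrm{length}(\mathcal{O}_{Z_i,p})$; properness of the cut, and hence finiteness, follows from Bertini-type genericity in characteristic zero. Next I would identify the coordinate ring: working in $\bbA^{n+1}$ with coordinates $x_0,\dots,x_n$ together with the extra variable $T$, the ideal $\mathcal{I}_Y + \mathcal{I}_{L^i} + \langle P_1,\dots,P_{\dim(Y)-i}\rangle$ cuts out $(Y \cap L^i \cap \mathbb{V}(P_1,\dots,P_{\dim(Y)-i})) \times \bbA^1_T$; the relation $1 - L'$ normalizes the general linear form $L'$ to $1$, i.e.\ restricts to the affine chart $D(L')$; and the relation $1 - T\cdot F'$ is the Rabinowitsch trick recording $T = 1/F'$, which both imposes $F' \neq 0$ and expresses $T$ through the remaining coordinates. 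Thus the displayed quotient is precisely the coordinate ring of
\[
  \tilde Z_i = \big((Y \cap L^i \cap \mathbb{V}(P_1,\dots,P_{\dim(Y)-i})) \times \bbA^1\big) \cap D(F') \cap D(L').
\]

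The core of the proof is to upgrade the set-theoretic bijection $\tilde Z_i \leftrightarrow Z_i$ already noted to a genuine isomorphism of schemes, for only then do the lengths, and hence $\dim_k$ and $\deg$, agree. I would argue this in two steps, each of which is an isomorphism onto an open set. Dehomogenizing along $L' = 1$ identifies the chart $\{L'=1\}$ with $D(L') \subset \pp^n$, and for general $L'$ the finitely many points of $Z_i$ all lie in $D(L')$, so no point is lost and every local ring is preserved. The projection forgetting $T$ identifies the graph $\{1 - T F' = 0\}$ with the open locus $D(F')$ where $F'$ is invertible. Finally, since $F' \in \mathcal{I}_X$ vanishes on $X$ while, for general $F'$, it vanishes at none of the off-$X$ points of $Y \cap L^i \cap \mathbb{V}(P_1,\dots,P_{\dim(Y)-i})$, restricting to $D(F')$ deletes exactly the components supported on $X$ and leaves the surviving points with their induced local rings unchanged; this matches the definition of $Z_i$. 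Composing the two isomorphisms gives $\tilde Z_i \cong Z_i$ as schemes, whence $\dim_k$ of the displayed quotient equals $\mathrm{length}(\tilde Z_i) = \deg(Z_i) = g_i(X,Y)$.

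The main obstacle is exactly this scheme-theoretic matching rather than the bijection of points. One must verify that the several genericity hypotheses ($L^i$, the $P_j$, $L'$, and $F'$ all general) can be met simultaneously and do not interact, and that ``removing $X$'' coincides with restriction to the open set $D(F')$ at the level of local rings and not merely set-theoretically. Both points are handled by noting that each requirement excludes only a proper closed subset of the relevant parameter space, so a simultaneously general choice exists, and that restriction to an open subscheme preserves the local rings at the points that remain; this is what makes the length (equivalently, multiplicity) bookkeeping exact. The argument is otherwise a direct adaptation of the method of \cite[Theorem 4.1]{helmer2016proj}.
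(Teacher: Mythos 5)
Your proposal is correct and takes essentially the same approach as the paper: start from the geometric count $g_i(X,Y)=\deg(Z_i)$, pass to the affine chart $\{L'=1\}$, and remove $X$ via the Rabinowitsch relation $1-T\cdot F'$, which is exactly the argument sketched in the discussion preceding the proposition and carried out in the proof of Theorem~\ref{MainTheorem1_Multi_proj}. The only difference is one of emphasis: where you spell out the scheme-theoretic isomorphisms and length bookkeeping, the paper reaches the same conclusion by invoking Kleiman transversality so that the off-$X$ points are reduced, making the point count, the degree, and the vector-space dimension coincide.
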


\subsection{Projective degrees of $X$ in $Y$}\label{subsec:ProjectiveDegrees}
Let $T_{\Sigma}$ be a smooth projective toric variety defined by a fan $\Sigma$ 
with Cox ring $R$ (see \S\ref{subsubsec:CoxRing}). Let $ Y \subset T_\Sigma$ be a pure-dimensional subscheme and 
let $X \subset Y$ be a subscheme defined by an $\alpha$-homogeneous ideal $\mathcal{I}_X=\langle f_0,\dots,f_r \rangle \subset R$ (see Definition \ref{def:alpha-homogeneous}).

\begin{definition}
  We define the \emph{projection of $Y$ along $X$} to be the rational map $\pr_X:Y \dashrightarrow \pp^r$ given by
\begin{equation}
\pr_X: p \mapsto (f_0(p):\cdots: f_r(p)).\label{eq:rational_map_of_ideal}
\end{equation} \end{definition}Let $\Gamma$ denote the blowup of $Y$ along $X$. We have the following diagram: 
\begin{equation}
\begin{tikzcd}
  & \Gamma \ar[dr,"\rho"] \ar[dl,swap,"\pi"] \arrow[draw=none]{r}[sloped,auto=false]{\subset} & T_\Sigma \times \pp^r \\
  Y \ar[rr,dashed,"\pr_X"] & & \pp^r \rlap{\ .}
\end{tikzcd}\label{eq:SegreGraphDiagram}
\end{equation}
Now we define the class
\begin{equation}
G(X,Y) := \sum_{{i}=0}^{\dim(Y)} \pi_*(K^{i} \cdot [\Gamma])  \in A^*(T_\Sigma), 
\end{equation}
where $K$ is the pullback along $\rho$ of the hyperplane class.
This class is a minor generalization of Aluffi's \emph{shadow of the graph} (\cite{Aluffi2003c}).
By construction
\[ \pi_*(K^{i}\cdot [\Gamma])=\left[ \overline{\pr_X^{-1}(\pp^{r-{(\dim(Y)-i)}}) - X} \right],\]
where $\pp^{r-{(\dim(Y)-i)}} \subset \pp^r$ is a general linear subspace.

We may write $G(X,Y)$ more explicitly as follows. Let $b_1,\dots, b_{m} \in A^1(T_{\Sigma})$ be a fixed nef basis for $A^1(T_{\Sigma})$ (this exists since $T_\Sigma$ is projective, see \cite[Proposition 6.3.24]{cox2011toric}). 
We may express the rational equivalence class of a point in $T_\Sigma$ as $b^n=b_1^{n_1}\cdots b_{m}^{n_m}$ where $n_j > 0$ for all $j$, and
the degree-$i$ monomials in $b_1,\dots, b_{m}$ which divide $b^n$ in $A^*(T_\Sigma)$ form a monomial basis for $A^i(T_\Sigma)$.
Hence we may write 
\begin{equation}
G (X,Y)=\sum_{|\nu|\leq \dim(Y)} g_{\nu}(X,Y) \cdot b^{n-\nu} \in A^*(T_{\Sigma}), \label{eq:G_projDeg}
\end{equation} where $\nu=(\nu_1,\dots, \nu_m)$.  
Note that the indices $\nu$ appearing in this expression depend on the choice of representative $b^n$ of the point class, which is not unique in general; however the class $G(X,Y)$ {\em does not} depend on this choice.
\begin{definition}
  We refer to the coefficients $g_{\nu}(X,Y)$ as the \textit{projective degrees of $X$ in $Y$}.
  In other words the projective degrees are
\begin{equation}
g_{\nu}(X,Y)=\int b^{\nu} \cdot \left[ \overline{\pr_X^{-1}\left(\pp^{r-(\dim(Y)-{|\nu|})}\right)
 - X} \right].\label{eq:ProjDegsExplicit_No_G}
\end{equation}
\label{def:projDegrees}
\end{definition}
The class $G(X,Y)$ measures, in a sense, how algebraically dependent the polynomials
$f_0,\dots,f_r$ are in $Y$.  If the linear system were base-point free on $Y$ (so $X \not\subset Y$ and $Y \not\subset X$), the shadow
of the graph would be $\sum_{i=0}^{\dim(Y)} \alpha^{\dim(Y)-i} [Y]$.  
The difference of these two classes will play an important role in the computation of the Segre class (see Theorems \ref{MainTheorem2MultiProj} and \ref{thm:MainSegreToric}), so we define the following notation:
\begin{equation}
  \Lambda (X,Y) =\sum_{|a|\leq \dim(X)}\Lambda_{a}(X,Y) \cdot b^{n-a}
:= \sum_{i=0}^{\dim(Y)} \alpha^{\dim(Y)-i}[Y] - G(X,Y) \in A^*(T_{\Sigma}).
  \label{eq:Lambda}
\end{equation}
In particular, $\Lambda_a(X,Y)=0$ when $|a| > \dim(X)$.

\begin{example}[Projective degrees in $\pp^n$] Work in $\pp^2_x$ with Chow ring $A^*(\pp^2_x)=\ZZ[h]/\langle h^3\rangle$ and consider the varieties $X=\mathbb{V}(x_0,x_1)$ and $Y=\mathbb{V}(x_0^3 + x_0^2 x_2 - x_1^2 x_2)$.
Using Theorem \ref{MainTheorem1_Multi_proj} we compute that the projective degrees are $g_0=7$ and $g_1=3=\deg(Y)$. Hence we have that $G(X,Y)=7h^2+3h$. For this example $[Y]=3h$ and we let $\alpha=3h$. Substituting these values into \eqref{eq:Lambda} we obtain $$\Lambda(X,Y)=(9h^2+3h)-(7h^2+3h)=2h^2.$$
\end{example}
\begin{example}[Projective degrees in products of projective spaces]Work in $\pp_x^2\times \pp_y^3$ with Chow ring $A^*(\pp_x^2\times \pp_y^3)\cong \ZZ[h_1,h_2]/\langle h_1^3,h_2^4 \rangle$. In this Chow ring the class of a point is $[pt]=h_1^2h_2^3$.
Consider the 3-dimensional variety  $$Y=\mathbb{V}(x_0x_2y_0-x_1^2y_2,y_3)$$
with divisor $X \subset Y$ defined by $x_1y_2+x_0y_0=0$.
For this example,  $[Y]=2h_1h_2+h_2^2$, and 
and we let $\alpha=2h_1+h_2$.
Computing the projective degrees using Theorem \ref{MainTheorem1_Multi_proj} we obtain the following. In dimension zero we have $g_{(0,0)}=0$. In dimension one we have $g_{(1,0)}=0$ and $g_{(0,1)}=1$. In dimension two we have $g_{(2,0)}=0$, $g_{(1,1)}=1$, and $g_{(0,2)}=2$. In dimension three we have $g_{(2,1)}=1$, $g_{(1,2)}=2$, and $g_{(0,3)}=0$. Hence we have that\footnotesize \begin{align*}
 G(X,Y)&=(0\cdot h_1^2h_2^3)+(0\cdot h_1h_2^3+1\cdot h_1^2h_2^2)+(0\cdot h_2^3+1\cdot h_1h_2^2+2\cdot h_1^2h_2)+(1\cdot h_2^2 + 2\cdot h_1h_2+0\cdot h_1^2)\\
 &=h_1^2h_2^2+h_1h_2^2+2h_1^2h_2+h_2^2+2h_1h_2.   
\end{align*}\normalsize Substituting these values into \eqref{eq:Lambda} we obtain \begin{align*}
    \Lambda(X,Y)&=(24h_1^2h_2^3+12h_1^2h_2^2+6h_1h_2^3+4h_1^2h_2+4h_1h_2^2+h_2^3+2h_1h_2+h_2^2)-G(X,Y)\\
    &=24h_1^2h_2^3+11h_1^2h_2^2+6h_1h_2^3+2h_1^2h_2+3h_1h_2^2+h_2^3.
\end{align*}

\end{example}

\begin{example}[Projective degrees in a toric variety]
Work in the smooth Fano toric threefold\footnote{The variety $T_\Sigma$ is generated in Macaulay2 \cite{M2} with the command \texttt{smoothFanoToricVariety(3,2)} from the {\tt NormalToricVarieties} package.} 
 $T_\Sigma$ where $\Sigma$ is the fan with rays $\rho_0{=}(1, 0, 0)$, $\rho_1{=}(0, 1, 0)$, $\rho_2{=}(-1, -1, -1)$, $\rho_3{=}(0, 0, 1)$, $\rho_4{=}(0, 0, -1)$ and maximal cones $\left\langle\rho_0, \rho_1, \rho_3\right\rangle$, $\left\langle\rho_0, \rho_1, \rho_4\right\rangle$, $\left\langle\rho_0, \rho_2, \rho_3\right\rangle$, $\left\langle\rho_0, \rho_2, \rho_4\right\rangle$, $\left\langle\rho_1, \rho_2, \rho_3\right\rangle$, $\left\langle\rho_1, \rho_2, \rho_4\right\rangle$.
Let $\overline{O(\sigma)}$ denote the orbit closure of a cone $\sigma\in \Sigma$ and set $b_1=[\overline{O(\rho_3)}]\in A^1(T_\Sigma)$ and $b_2=[\overline{O(\rho_0)}]=[\overline{O(\rho_1)}]=[\overline{O(\rho_2)}]\in A^1(T_\Sigma)$. The divisors $b_1,b_2$ form a nef basis for $A^1(T_\Sigma)$. The Cox ring of $T_\Sigma$ is $R=\CC[x_0,\dots, x_4]$, with irrelevant ideal $B=(x_0,x_1,x_2)\cap (x_3,x_4)$. The Cox ring is $\ZZ^2$ graded; the multidegree of $x_3$ is $b_1$, the multidegree of $x_0$, $x_1$ and $x_2$ is $b_2$, and the multidegree of $x_4$ is $b_1-b_2$. The Chow ring can be written as
\[
A^*(T_\Sigma)\cong \ZZ[b_1,b_2]/\left\langle b_1^3,2b_2-2b_1,b_1b_2-b_2^2 \right\rangle.
\]
For more on constructing the Chow ring of a smooth projecive toric variety from its fan see \cite[Theorem~10.8]{Danilov1978}.  

Let $X=\mathbb{V}(5x_0+7x_2,x_3+x_2x_4)\subset T_\Sigma$ and let $Y=\mathbb{V}(x_3+x_2x_4)\subset T_\Sigma$. Then $X$ is a curve on the surface $Y$. For this example, $[Y]=b_1$ and we let $\alpha=b_1+b_2$.
Let $[pt]$ denote the rational equivalence class of a point in $A^*(T_\Sigma)$. Using Proposition \ref{propn:projective_degreeToric} we compute the projective degrees and obtain 
$
G(X,Y)=[pt]+b_1b_2+b_1.
$
Substituting $\alpha=b_1+b_2$ and $[Y]=b_1$ into \eqref{eq:Lambda} we obtain $$
\Lambda(X,Y)=([pt]+b_1^2+b_2^2+b_1)-([pt]+b_1b_2+b_1)=b_1^2\in A^*(T_\Sigma)
.$$
Note that $[pt]=b_2^3=b_1b_2^2 = b_1^2b_2$.  The lack of a unique representative for the point class in this case stems from the existence of non-effective divisors on $T_\Sigma$. Since the indexing convention for projective degrees depends on the chosen representative of the point class, we have opted to simply write the resulting unique class $G(X,Y)$.
\end{example}
\begin{remark}
  If $L^a$ is the complete intersection of general divisors of $T_{\Sigma}$ with $[L^a]=b^a$ such that $|a| \leq \min\{\dim(X),|\nu|\}$,
  then we have that \[g_\nu(X,Y)=g_{\nu-a}(X\cap L^a,Y\cap L^a). \]
  In particular, if $D$ is a divisor with $[D] =b_i \in A^1(T_\Sigma)$, then $g_1(X,Y) = g_0(X \cap D, Y \cap D)$. 
  \label{remark:projDegLinSlice}
\end{remark}
 
\section{Computing Segre classes}
\label{sec:MainResults}
In this section we give the main results of this paper, namely formulas for the Segre class in terms of projective degrees (\S\ref{subsec:ProjectiveDegrees}) and explicit formulas which allow us to compute these projective degrees effectively in practice. We first give our results for subschemes of a product of projective spaces. In \S\ref{subsec:SegSubToric} we generalize this to subschemes of a smooth projective toric variety. Throughout this section we will freely use the notations and conventions defined in \S\ref{subsec:notationConvention}.

\subsection{Computing Segre classes in $A^*(\pp^{n_1} \times\cdots \times \pp^{n_m})$}\label{subSec:SegreMultiProj}We begin by defining some notations for products of projective spaces. 
\begin{definition}\label{def:chowring}
Throughout this section we work in $\pp^{\mathbf{n}}_m := \pp^{n_1}\times\cdots \times \pp^{n_m}$ with multi-graded coordinate ring 
$R=k[x^{(1)},\dots,x^{(m)}]$, where $x^{(j)}=x_1^{(j)},\dots,x_{n_j}^{(j)}$.
The Chow ring of $\pp^{n_1}\times\cdots \times \pp^{n_m}$ is
\begin{equation}
A^*(\pp^{\bf n}_m)\cong \ZZ[h_1,\dots,h_m]/\langle h_1^{n_1+1},\dots,h_m^{n_m+1} \rangle\label{eq:ChowRingMultiProj}
\end{equation}
so that $h_j$ is (the pullback of) the class of a hyperplane defined by a general linear form in the variables $x^{(j)}$.
It is sometimes convenient to write a class by dimension, instead of by codimension.  To this end, for
$a=(a_1,\dots,a_m)$ we write \[ [\pp^a_m] := h^{n-a} = h_1^{n_1-a_1} \cdots h_m^{n_m-a_m} , \] 
and we write $L^a$ for a general linear space in $\pp^\mathbf{n}_m$ such that
\[ [L^a] = h^a . \]

\end{definition}
\begin{convention}
In this section, we consider $X\subset Y \subset \pp^{\bf n}_m$ with the following conventions:
\begin{enumerate}
\item $Y \subset \pp^{\bf n}_m$ is an irreducible scheme of dimension $N = \dim(Y)$,
\item $X \subset Y$ is a closed subscheme defined by $\alpha$-homogeneous polynomials $f_0,\dots,f_r$.
\end{enumerate}\label{conv:X_Y_MultiProj}
\end{convention}
Recall from \S\ref{subsubsec:HomGens} that the assumption (2) above is made without loss of generality.
\begin{proposition}
The projective degrees of $X$ in $Y$ are given by
\[ g_{a}(X,Y) = \deg \left( Y \cap L^a \cap W - X \right),\]
 where \begin{equation}
W=\mathbb{V}(P_1,\dots,P_{\dim(Y)-|a|}), \;\;\;\mathrm{with}\;\;\; P_j=\sum_i \lambda_i f_i\label{eq:Pjdef}
\end{equation} 
for general $\lambda_i\in k$. 
\label{propn:projective_degreeFirstMultiProj}
\end{proposition}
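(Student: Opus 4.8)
The plan is to extract the projective degrees directly from their definition in Equation \eqref{eq:ProjDegsExplicit_No_G} and show that the geometric description of $\pr_X^{-1}(\pp^{r-(\dim(Y)-|a|)})$ coincides with the scheme $W$ cut out by general linear combinations of the $f_i$. First I would recall from Definition \ref{def:projDegrees} that
\[
g_a(X,Y)=\int b^a \cdot \left[\overline{\pr_X^{-1}\left(\pp^{r-(\dim(Y)-|a|)}\right) - X}\right],
\]
specialized to $T_\Sigma = \pp^{\bf n}_m$, where in place of the nef basis $b_1,\dots,b_m$ we now use the hyperplane classes $h_1,\dots,h_m$. Intersecting with $b^a = h^a$ amounts, by the discussion preceding Definition \ref{def:chowring}, to intersecting with a general linear space $L^a$ of the appropriate multidegree, and taking $\int$ of a zero-dimensional class is precisely taking its degree; this is how the factor $Y \cap L^a$ and the outer $\deg(\,\cdot\,)$ enter the formula.

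The key step is the identification of the preimage of a general linear subspace under the rational map $\pr_X$. Since $\pr_X: p \mapsto (f_0(p):\cdots:f_r(p))$, a general codimension-$(\dim(Y)-|a|)$ linear subspace $\pp^{r-(\dim(Y)-|a|)} \subset \pp^r$ is the common zero locus of $\dim(Y)-|a|$ general linear forms $\ell_1,\dots,\ell_{\dim(Y)-|a|}$ in the homogeneous coordinates of $\pp^r$. Pulling each $\ell_j$ back along $\pr_X$ yields exactly a general $k$-linear combination $P_j = \sum_i \lambda_i f_i$ of the defining polynomials, so that
\[
\pr_X^{-1}\left(\pp^{r-(\dim(Y)-|a|)}\right) = \mathbb{V}(P_1,\dots,P_{\dim(Y)-|a|}) = W
\]
on the locus where $\pr_X$ is defined, namely away from the base scheme $X$. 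Combining this with the preceding paragraph gives
\[
g_a(X,Y)=\deg\left(\overline{\left(Y \cap L^a \cap W\right) - X}\right),
\]
and since the general linear slices force the relevant locus to be finite, the closure is a finite set of reduced points and the closure operation may be dropped, yielding the stated formula.

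The main obstacle is justifying the passage from the \emph{class} computation $\int b^a \cdot [\,\cdot\,]$ to the honest scheme-theoretic degree $\deg(Y \cap L^a \cap W - X)$ with the base locus properly excised. The subtlety is that $\pr_X$ is only a rational map, so $\pr_X^{-1}$ must be interpreted via the graph $\Gamma$ in diagram \eqref{eq:SegreGraphDiagram} and its pushforward $\pi_*$, rather than as a naive preimage; one must check that the strict transform, and not the exceptional contribution over $X$, is what $\pi_*(K^{|a|}\cdot[\Gamma])$ records. This is where the removal of $X$ in $Y \cap L^a \cap W - X$ is essential: a general linear combination $P_j$ vanishes along $X$ automatically, so $X$ always lies in $W$, and the degree counts only the \emph{residual} points off of $X$. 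Verifying by a Bertini-type genericity argument that for general $\lambda_i$ and general $L^a$ the intersection $Y \cap L^a \cap W$ meets the complement of $X$ transversely in the expected dimension, so that these residual points are reduced and correctly counted, is the technical heart of the argument.
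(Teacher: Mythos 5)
Your proposal is correct and takes essentially the same approach as the paper's proof: unwind Definition \ref{def:projDegrees}, observe that pulling back general linear forms on $\pp^r$ along $\pr_X$ gives general $k$-linear combinations $P_j=\sum_i \lambda_i f_i$, so that $\overline{\pr_X^{-1}\left(\pp^{r-(\dim(Y)-|a|)}\right)-X}=\overline{(Y\cap W)-X}$, and then use genericity to reduce the class computation to a count of reduced points off of $X$. The Bertini-type genericity that you flag as the remaining technical heart is exactly what the paper dispatches in one line by invoking Kleiman's transversality theorem \cite{kleiman1974transversality}, which guarantees both that $W-X$ is reduced and that the general $L^a$ meets it properly.
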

\begin{proof}
By definition, $g_a(X,Y) = \int b^{n-a} \cdot [\overline{pr_X^{-1}(\mathfrak{L})-X}]$, where $\mathfrak{L} = \pp^{r-(\dim(Y)-|a|)} \subset \pp^r$ is general in the sense of Kleiman's transversality theorem \cite{kleiman1974transversality}.  Then $W=pr_X^{-1}(\mathfrak{L})$ is the complete intersection of $\dim(Y)-|a|$ hypersurfaces defined by general $k$-linear combinations of the $f_i$ and $W-X$ is reduced.
\end{proof}

\begin{remark} 
Let $C_XY$ denote the normal cone to $X$ in $Y$.
The components $C_1,\dots,C_l$ of the normal cone project onto subvarieties $Z_1,\dots,Z_l$ of $X$. These are known as the \emph{distinguished subvarieties}; see also \cite[Definition~6.1.2]{fulton2013intersection}. 

Consider the blowup diagram
   \[\begin{tikzcd}[baseline=(b.base)]
       & \Gamma \ar[dr,"\rho"] \ar[dl,swap,"\pi"] & \\
       Y \ar[rr,dashed,"\pr_X"] & & |[alias=b]| \pp^r \rlap{.}
     \end{tikzcd} \]
Then $\Gamma \subset \pp^{\bf n}_m \times \pp^r$ has class 
$$[\Gamma] = \sum_a g_a(X,Y) [\pp^a_m \times \pp^{\dim(Y)-|a|}].$$
Let $L^a \subset \pp^{\bf n}_m$ and let $W(X,Y)= \overline{\pr_X^{-1}\left(\pp^{r-(\dim(Y)-|a|)}\right)}$. 
In order to compute the projective degrees via Definition \ref{def:projDegrees}, we take $\deg(\pi^{-1}(L^a) \cap \rho^{-1}(\pp^{r-(\dim(Y)-|a|)}) \cap \Gamma)$, and this will equal $g_a(X,Y)$, provided that our choices are \emph{general enough}.
The precise condition is that $L^a$ must not contain the distinguished subvarieties for $X$ in $Y$, or else $\pi^{-1}(L^a)$ will contain components of the exceptional divisor, and therefore fail to be of the correct codimension in $\Gamma$.  
Since we perform the computation in $\pp^{\bf n}_m$, the condition on $W$ is simpler; it must meet $L^a \cap Y$ in dimension 0.  
This is a simpler point of view than that of \cite[Theorem 3.2]{Harris2017}. In implementations, ``general'' is replaced by some version of ``random''; however, using this remark we can verify that our choices are general enough, which makes the resulting algorithms non-probabilistic.\label{remark:GeneralChoiceOFLinSpace}
\end{remark}

We now give an algebraic version of the geometric result above.  This formulation is what allows for numerical approaches to be applied to the computations in this paper.

\begin{theorem}
The projective degrees of $X$ in $Y$ are given by
  \[ g_a(X,Y)= \dim_k \frac{R[T]}{\mathcal{I}_Y + \mathcal{I}_{L^a} + A + \langle P_1,\dots,P_{N-|a|},1-T\cdot P_0 \rangle }, \]
where $A=\langle \ell(x^{(1)})-1,\dots,\ell(x^{(m)})-1 \rangle$ for $\ell(x^{(i)})$ a general linear form in $x^{(i)}$, and $P_j$ for $0 \leq j$ is as in \eqref{eq:Pjdef}.
\label{MainTheorem1_Multi_proj}
\end{theorem}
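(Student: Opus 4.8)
The plan is to translate the geometric description of the projective degrees from Proposition~\ref{propn:projective_degreeFirstMultiProj} into a vector-space dimension count, and the bridge between the two is the standard fact that for a zero-dimensional scheme $Z$ over an algebraically closed field $k$, one has $\deg(Z) = \dim_k k[Z]$ exactly when $Z$ is reduced (so that each point contributes multiplicity one). Proposition~\ref{propn:projective_degreeFirstMultiProj} already tells us that $g_a(X,Y) = \deg(Y \cap L^a \cap W - X)$ where $W = \mathbb{V}(P_1,\dots,P_{N-|a|})$, and that the points of $Y \cap L^a \cap W - X$ are reduced by Kleiman transversality. So the entire content is to exhibit the quotient ring in the statement as the coordinate ring of an affine scheme that is in bijection (as a reduced zero-dimensional scheme) with the projective set $Y \cap L^a \cap W - X$.

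First I would identify the role of each piece of the ideal in the denominator. The generators $\mathcal{I}_Y$, $\mathcal{I}_{L^a}$, and $\langle P_1,\dots,P_{N-|a|}\rangle$ cut out the projective closed set $Y \cap L^a \cap W$ inside $\pp^{\bf n}_m$. The subtraction of $X$ is handled by the saturation-style trick: introducing the auxiliary variable $T$ and the relation $1 - T\cdot P_0$ forces $P_0$ to be invertible, which removes exactly the locus $\mathbb{V}(P_0) \supset X$; since $P_0$ is a general $k$-linear combination of the $f_i$, the hypersurface $\mathbb{V}(P_0)$ meets $Y \cap L^a \cap W$ precisely along (a neighborhood of) $X$, so inverting $P_0$ deletes the unwanted points while preserving the rest. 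This is the same localization/Rabinowitsch device used in Proposition~\ref{prop:projDegreesPPn}, and I would cite that precedent.

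Next I would deal with the dehomogenization encoded by the ideal $A = \langle \ell(x^{(1)})-1,\dots,\ell(x^{(m)})-1\rangle$. Each relation $\ell(x^{(i)}) = 1$ chooses a general affine chart on the $i$-th projective factor $\pp^{n_i}$, and imposing all $m$ of them simultaneously passes from $\pp^{\bf n}_m$ to a distinguished affine open $\bbA^{n_1}\times\cdots\times\bbA^{n_m}$. The key geometric claim to verify is that, because each $\ell$ is general, the hyperplanes $\mathbb{V}(\ell(x^{(i)}))$ miss the finitely many points of the zero-dimensional set $Y \cap L^a \cap W - X$, so the passage to this affine chart is a bijection on points and does not drop or merge any of them. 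Combining this with the previous step, the affine scheme defined by the full ideal is set-theoretically in $1$--$1$ correspondence with $Y \cap L^a \cap W - X$, and it is reduced for the same transversality reason; hence its coordinate ring has $k$-dimension equal to the number of points, which is $\deg(Y \cap L^a \cap W - X) = g_a(X,Y)$.

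The main obstacle, and the step requiring the most care, is the reducedness/generality argument: I must confirm that the general choices (of the $P_j$ via the $\lambda_i$, of the linear forms cutting out $L^a$, of $P_0$, and of the $\ell(x^{(i)})$) can be made \emph{simultaneously} general so that the resulting affine scheme is genuinely reduced and zero-dimensional, with no embedded or fat points inflating the dimension count. The cleanest route is to invoke Kleiman's transversality theorem \cite{kleiman1974transversality} once, as in Proposition~\ref{propn:projective_degreeFirstMultiProj}, to get a reduced zero-dimensional intersection, and then to argue that the saturation by $P_0$ and the affine restriction by $A$ are both transparent operations (a localization and a choice of general chart) that manifestly preserve reducedness and the point count; the ring-theoretic verification that these operations correspond exactly to adding the listed generators is then routine. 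I would present the argument in that order, leaning on Proposition~\ref{propn:projective_degreeFirstMultiProj} for the hard transversality input and treating the rest as bookkeeping.
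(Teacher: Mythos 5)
Your proposal takes the same route as the paper's proof: invoke Proposition \ref{propn:projective_degreeFirstMultiProj}, dehomogenize each factor by the general affine chart ideal $A$, and delete $X$ with the Rabinowitsch relation $1-T\cdot P_0$; in both arguments the content is checking that these two operations preserve the relevant count. So the skeleton matches.

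There is, however, one genuine flaw in how you close the count, and it bites in exactly the generality in which the theorem is stated. Convention \ref{conv:X_Y_MultiProj} allows $Y$ to be an irreducible \emph{scheme}, possibly non-reduced, say with geometric multiplicity $\mathfrak{m}$. Your bridge between geometry and algebra is the claim that $\deg(Z)=\dim_k k[Z]$ ``exactly when $Z$ is reduced,'' together with the claim that $Y\cap L^a\cap W-X$ is reduced by Kleiman transversality. Both are incorrect: for a zero-dimensional affine scheme over $k=\bar{k}$ the equality $\deg(Z)=\dim_k k[Z]$ holds \emph{always} (degree is length; reducedness is what gives that $\deg(Z)$ equals the number of points), and when $Y$ is non-reduced the scheme $Y\cap L^a\cap W-X$ is \emph{not} reduced --- transversality applies to $Y_{\rm red}$, so every point carries multiplicity $\mathfrak{m}$. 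Read literally, your argument equates $\dim_k$ of the quotient ring with the number of points, which for non-reduced $Y$ equals $g_a(X,Y)/\mathfrak{m}$, not $g_a(X,Y)$; your conclusion is saved only because the two misstatements cancel. The distinction is not pedantic: the proof of Theorem \ref{MainTheorem2MultiProj} relies on these projective degrees counting points \emph{with} geometric multiplicity, via $g_a(X,Y)=\mathfrak{m}\cdot g_a(X\cap Y_{\rm red},Y_{\rm red})$. The repair is what the paper does: assert reducedness only of the complete-intersection part $(L^a\cap W-X)\cap\mathbb{V}(A)$, track degrees (lengths) rather than point counts through the chart restriction and the localization --- both of which preserve the local rings at the surviving points --- and finish with the multiplicity-insensitive identity $\deg(Z)=\dim_k k[Z]$.
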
\begin{proof}
From Proposition \ref{propn:projective_degreeFirstMultiProj} we know that
\[
g_a(X,Y)=\deg(Y\cap L^a \cap W-X)
\]
where $V_1=(Y\cap L^a \cap W)-X$ is a zero-dimensional subscheme of $\PP^{n_1}\times \cdots\times \pp^{n_m}$. The general (homogenous) linear forms in $x^{(i)}$ have the form $\ell(x^{(i)})=\theta_0x_0^{(i)}+\cdots +\theta_{n_i}x_{n_i}^{(i)}$ for general $\theta_j\in k$. We can dehomogenize each factor $\pp^{n_i}$ by choosing a general hyperplane at infinity defined by the affine linear form $\ell(x^{(i)})-1$ (instead of, for example, the hyperplane defined by $x_0^{(i)}-1$). The ideal $A$ then dehomogenizes $\PP^{n_1}\times \cdots\times \pp^{n_m}$ and gives an affine scheme 
\[ 
\tilde{V}_1=(Y\cap L^a \cap W-X)\cap \mathbb{V}(A)\subset k^{n_1+1}\times \cdots \times k^{n_m+1}
\]
such that $\deg(V_1)=\deg(\tilde{V}_1)$ and that $(L^a \cap W-X)\cap \mathbb{V}(A)$ is reduced. Linearly embed \[k^{n_1+1} \times \dots \times k^{n_m+1} \hookrightarrow (k^{n_1+1} \times \dots \times k^{n_m+1}) \times k^1=\mathrm{Spec}(R[T])\] 
and consider its image
\[
\tilde{V}_2=Y\cap L^a \cap W\cap \mathbb{V}(A)\cap \mathbb{V}(1-TP_0)\subset \mathrm{Spec}(R[T]),
\] written using the Rabinowitsch trick, by which it follows that $\deg(\tilde{V}_2)=\deg(\tilde{V}_1)$.
\end{proof}

Let $i = (i_1,\dots,i_m)$, and write the Segre class as
\begin{eqnarray}
s(X,Y) \;\;=\sum_{|i|\leq \dim(X)}
    s_{i}(X,Y) \cdot h^{n-i}.\label{eq:SegreCoeffs}
\end{eqnarray} 
We now give a recursive formula for the Segre class in terms of projective
degrees.  
Note that the recursion begins from $|a|=\dim(X)$ and proceeds to $|a|=0$. 
\begin{theorem}
The Segre class of $X$ in $Y$ is given by the recursive formula
 \begin{equation}\label{eq:segreRecursion}
s_a(X,Y)=\Lambda_{a}(X,Y) -\int (1+\alpha)^{N-|a|} h^a \cdot
\sum_{|i| > |a|} s_i(X,Y) h^{n-i},
\end{equation}
where $\Lambda_a(X,Y)$ is the coefficient defined in \eqref{eq:Lambda}.
\label{MainTheorem2MultiProj}\label{propn:SegreSchemeY}
\end{theorem}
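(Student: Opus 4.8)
The plan is to relate the shadow of the graph $G(X,Y)$ to the Segre class via the standard projection formula for the blowup, then extract the recursion by comparing coefficients dimension by dimension. First I would recall the blowup diagram from \eqref{eq:SegreGraphDiagram}: the graph $\Gamma$ is the blowup of $Y$ along $X$ sitting inside $\pp^{\mathbf n}_m \times \pp^r$, with exceptional divisor $E$ whose projection to $X$ computes $s(X,Y)$ by Definition \ref{def:Segre}. The key geometric input is that the pullback $K$ of the hyperplane class from $\pp^r$ restricts on $\Gamma$ to the divisor class $\pi^*\alpha - E$, since the rational map $\pr_X$ is cut out by the $\alpha$-homogeneous sections $f_0,\dots,f_r$ whose common base locus is $X$. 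This is exactly Aluffi's setup, and it is the identity that lets us convert powers of $K$ into a generating-function expression involving $E$.

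With that identity in hand, I would compute $\pi_*(K^i \cdot [\Gamma])$ by substituting $K = \pi^*\alpha - E$ and expanding. Summing over $i$ from $0$ to $\dim(Y)$ gives $G(X,Y) = \sum_i \pi_*((\pi^*\alpha - E)^i \cdot [\Gamma])$. Using the projection formula to pull $\pi^*\alpha$ out of the pushforward, and recognizing that the terms purely in powers of $E$ reassemble (after applying $\eta_*$ and the inverse-class expansion $1 - E + E^2 - \cdots$) into $s(X,Y)$, one obtains a closed relation of the shape
\[
\sum_{i=0}^{\dim(Y)} \alpha^{\dim(Y)-i}[Y] - G(X,Y) = (\text{a } (1+\alpha)\text{-weighted transform of } s(X,Y)).
\]
The left-hand side is precisely $\Lambda(X,Y)$ by its definition in \eqref{eq:Lambda}. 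The weighting by powers of $(1+\alpha)$ arises because capping $s(X,Y)$ against the various $K^i$ and repackaging the binomial coefficients from $(\pi^*\alpha - E)^i$ produces the factor $(1+\alpha)^{N-|a|}$ seen in \eqref{eq:segreRecursion}; this is the manipulation that matches Aluffi's formula for $c_{SM}$-type classes and should be carried out by comparing the coefficient of $h^{n-a}$ on both sides.

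Having the identity $\Lambda(X,Y) = \sum_a \bigl(\text{terms coupling } \alpha \text{ and } s(X,Y)\bigr) h^{n-a}$, the recursion itself follows by solving for $s_a$ one graded piece at a time. For the top-dimensional piece $|a| = \dim(X)$, no higher-codimension Segre coefficients interfere, so $\Lambda_a$ directly yields $s_a$; then descending to smaller $|a|$, the already-known coefficients $s_i$ with $|i| > |a|$ appear in the correction term $\int (1+\alpha)^{N-|a|} h^a \cdot \sum_{|i|>|a|} s_i(X,Y) h^{n-i}$, giving exactly \eqref{eq:segreRecursion}. I expect the main obstacle to be the bookkeeping in the middle step: correctly tracking the binomial expansion of $(\pi^*\alpha - E)^i$ through $\pi_*$ and $\eta_*$ so that the powers of $(1+\alpha)$ and the truncation to the expected dimension come out precisely, rather than the conceptual setup, which is the standard blowup formalism. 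Care is also needed to justify the degree extraction $\int(\cdots)h^a$ lands in the right graded component of $A^*(\pp^{\mathbf n}_m)$, using the monomial basis $h^{n-i}$ and the pairing conventions fixed in Definition \ref{def:chowring}.
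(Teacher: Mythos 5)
Your strategy for the case when $Y$ is a variety is sound and close in spirit to the paper's: the identity you are driving at,
\[
g_a(X,Y) \;=\; \int \alpha^{N-|a|}h^a\, [Y] \;-\; \int (1+\alpha)^{N-|a|}h^a \frown s(X,Y),
\]
is exactly the paper's central relation, and your derivation of it --- writing $K=\pi^*\alpha - E$ on the blowup, expanding binomially, and reassembling the powers of $E$ into $s(X,Y)$ via $\eta_*\bigl((1-E+E^2-\cdots)\frown[E]\bigr)$ --- is a valid inlining of the proof of Fulton's Proposition 4.4, and the bookkeeping you worry about does close up (the binomial coefficients $\binom{N-|a|}{j}$ pair against the dimension-$(N-j)$ pieces of $s(X,Y)$ to give the factor $(1+\alpha)^{N-|a|}$). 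The paper instead cites Fulton's Proposition 4.4 only for $g_0$ and reduces general $g_a$ to $g_0$ by slicing with a general linear space, $g_a(X,Y)=g_0(X\cap L^a, Y\cap L^a)$ (Remark \ref{remark:projDegLinSlice}); your version proves the relation for all $a$ simultaneously and avoids the slicing step. Either way, summing over $a$, comparing with the definition of $\Lambda(X,Y)$ in \eqref{eq:Lambda}, and solving downward from $|a|=\dim(X)$ yields \eqref{eq:segreRecursion}, exactly as you describe.

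However, there is a genuine gap: Convention \ref{conv:X_Y_MultiProj} only requires $Y$ to be an \emph{irreducible scheme}, not a variety, and your entire argument runs through Definition \ref{def:Segre}, which (via Fulton's Corollary 4.2.2) expresses $s(X,Y)$ by the blowup formula only when $Y$ is reduced and irreducible. When $Y$ is non-reduced, $s(X,Y)$ is defined as the Segre class of the normal cone, and the identity $K=\pi^*\alpha-E$ on $\mathrm{Bl}_X Y$ is no longer, by itself, a computation of $s(X,Y)$. The paper devotes the second half of its proof to this case: writing $\mathfrak{m}$ for the geometric multiplicity of $Y$, it invokes Fulton's Lemma 4.2 to get $s(X,Y)=\mathfrak{m}\cdot s(X\cap Y_{\rm red},Y_{\rm red})$, observes that projective degrees count points with geometric multiplicity so that $g_a(X,Y)=\mathfrak{m}\cdot g_a(X\cap Y_{\rm red},Y_{\rm red})$ and hence $\Lambda(X,Y)=\mathfrak{m}\cdot\Lambda(X\cap Y_{\rm red},Y_{\rm red})$, and then notes that $\mathfrak{m}$ factors out of both sides of \eqref{eq:segreRecursion}. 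Without this reduction, your argument proves the theorem only for $Y$ a variety, not in the stated generality.
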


\begin{proof}
  First suppose that $Y$ is a variety. 
	To write $s(X,Y)$ in terms of $G(X,Y)$, we use
  Proposition 4.4 of \cite{fulton2013intersection} to obtain the
  relation
  \[ g_{0} = \int c_1(\mathcal{O}(\alpha))^{N}\frown [Y] -\int c(\mathcal{O}(\alpha))^{N} \frown s(X,Y). \]
  Let $a=(a_1,\dots,a_m)$. 
  By Remark
  \ref{remark:projDegLinSlice} we get
  \[ g_{a}(X,Y)= g_{0}(X\cap L^a,Y \cap L^a)= \int \alpha^{N-|a|} {h}^{a} [Y] -\int (1 + \alpha)^{N-|a|}
    {h}^{a} \frown s(X,Y) ,\] where $L^a$ is a general linear space.
  Using this expression for the projective degrees and summing over $a$, we obtain that
  \begin{equation}
    G(X,Y)= \sum_{i=0}^{\dim(Y)}\alpha^{N-i}[Y] - \sum_{|a|\leq \dim(Y)} \left( \int (1 + \alpha)^{N-|a|}
    {h}^{a} \frown s(X,Y) \right) h^{n-a}, \label{eq:segProofFormula}
  \end{equation}
  and when $|a| > \dim(X)$ the last term in \eqref{eq:segProofFormula} is zero, so we can write
  \begin{equation}
    \Lambda(X,Y)= \sum_{|a|\leq \dim(X)} \left( \int (1 + \alpha)^{N-|a|}
    {h}^{a} \frown s(X,Y) \right) h^{n-a}. 
  \end{equation}
In the first step, when $|a|=\dim(X)$, we get $s_{a}(X,Y) = \Lambda_{a}(X,Y)$ and proceeding with $|a|=\dim(X)-1,\dots, 0$ gives \eqref{eq:segreRecursion}.

Now we return to the general hypothesis of $Y$ an irreducible scheme.
Let $\mathfrak{m}$ denote the geometric multiplicity of $Y$ so that $[Y]=\mathfrak{m} [Y_{\mathrm{red}}]$.
By Fulton \cite[Lemma~4.2]{fulton2013intersection} we have that
\[
  s(X,Y)=\mathfrak{m}\cdot s(X \cap Y_\mathrm{red},Y_{\mathrm{red}}).
\]
Projective degrees count points (with geometric multiplicity) in a zero-dimensional set, and we have
\begin{align*}
  g_a(X,Y) &= \deg\left( Y \cap L^a \cap W - X \right) \\
  &= \mathfrak{m} \cdot \deg\left( Y_{\rm red} \cap L^a \cap W - X \right) \\
  &= \mathfrak{m} \cdot g_a(X \cap Y_{\rm red}, Y_{\rm red}).
\end{align*}
Substituting this into $\Lambda(X,Y)$ gives
  \begin{align*}
  \Lambda(X,Y)  &= \sum_{i=0}^{\dim(Y)}\alpha^{N-i} [Y] - G(X,Y) \\
  &= \sum_{i=0}^{\dim(Y)}\alpha^{N-i} (\mathfrak{m}[Y_{\rm red}]) - \mathfrak{m}\cdot G(X \cap Y_{\rm red},Y_{\rm red}) \\
  &= \mathfrak{m} \cdot \Lambda(X \cap Y_{\rm red}, Y_{\rm red})\;.
  \end{align*}
The proof is completed by observing that $\mathfrak{m}$ also factors out of the right-hand side of \eqref{eq:segreRecursion}.
\end{proof}
  
We now prove a lemma which gives the value of the right-hand side of the
expression for the dimension-$X$ projective degrees
when $X$ is not contained in $Y$. Using this,
we show in Proposition \ref{corr:LambdaDimXReducible} that the dimension-$X$ part of the Segre class $s(X,Y)$
can be computed directly (i.e.,~without knowing the irreducible components of $Y$) when $Y$ is a 
reducible scheme and has pure dimension. This fact will be useful in \S\ref{subSection:AlgMult1} and \S\ref{sec:GBFreeContainment}.  
\begin{lemma}
  Let $X\subset  \pp^{\mathbf{n}}_m$ be a closed subscheme defined by an $\alpha$-homogeneous ideal.
  Let $Z \subseteq \pp^{\mathbf{n}}_m$ be an irreducible scheme such that  $Z_{\rm red} \not\subset X$ and $(X_i)_{\rm red} \not\subset Z$ for all top-dimensional irreducible components $X_i$ of $X$. 
For $a\in \ZZ_{\geq 0}^m$ such that $|a|=\dim(X)$, 
we have that
\[ 
g_a(X \cap Z, Z) =\deg \left( Z \cap L^a \cap W  \right)
= \int h^a \cdot \alpha^{N - \dim(X)} [Z].
\] 
where $W$ is as in \eqref{eq:Pjdef};
or equivalently,
$ \Lambda_a(X \cap Z, Z) = 0.$
\label{propn:reducible_ProjDeg}
\end{lemma}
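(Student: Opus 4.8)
The plan is to start from the geometric description of the projective degree in Proposition~\ref{propn:projective_degreeFirstMultiProj}, which gives
\[
g_a(X\cap Z, Z) = \deg\!\left( Z \cap L^a \cap W - (X\cap Z) \right),
\]
where $W=\mathbb{V}(P_1,\dots,P_{N-|a|})$ as in \eqref{eq:Pjdef} and the $P_j$ are general $\alpha$-homogeneous forms cutting out the base scheme $X\cap Z$ of the linear system on $Z$. The goal is to prove two things: first, that subtracting $X\cap Z$ removes nothing, so that $g_a(X\cap Z,Z)=\deg(Z\cap L^a\cap W)$; and second, that this remaining intersection has the naive B\'ezout degree $\int h^a\cdot\alpha^{N-\dim(X)}[Z]$. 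The main obstacle is exactly the bookkeeping around the base locus: the $P_j$ are \emph{not} general sections of $\mathcal{O}(\alpha)$ but rather all vanish on $X$, so one must argue that this excess vanishing contributes nothing to the dimension-$X$ projective degree.

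First I would establish the dimension estimate $\dim(X\cap Z) < \dim(X) = |a|$. Writing $X\cap Z=\bigcup_i (X_i\cap Z)$ over the irreducible components $X_i$ of $X$, any component with $\dim X_i < \dim X$ contributes dimension at most $\dim X_i < \dim X$, while for a top-dimensional component the hypothesis $(X_i)_{\rm red}\not\subset Z$ forces $(X_i)_{\rm red}\cap Z$ to be a proper closed subvariety of the irreducible $(X_i)_{\rm red}$, hence of dimension strictly less than $\dim X$. Since $L^a$ is a general linear space of codimension $|a|=\dim X$, Kleiman's transversality theorem \cite{kleiman1974transversality} then gives $X\cap Z\cap L^a=\emptyset$. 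Consequently $Z\cap L^a\cap W$ is supported away from the base locus, the removal of $X\cap Z$ is vacuous, and $g_a(X\cap Z,Z)=\deg(Z\cap L^a\cap W)$.

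It remains to evaluate $\deg(Z\cap L^a\cap W)$. Because $Z\cap L^a$ is disjoint from the base locus $X\cap Z$, the linear system spanned by $f_0,\dots,f_r$ restricts to a base-point-free system on $Z\cap L^a$, so $\pr_X$ is an honest morphism there and $Z\cap L^a\cap W=\pr_X^{-1}(\mathfrak{L})$ for a general linear subspace $\mathfrak{L}\subset\pp^r$ of codimension $N-|a|$. Here the hypothesis $Z_{\rm red}\not\subset X$ is what guarantees that the general $P_j$ meet $Z$ properly (otherwise every $P_j$ would contain $Z$ and no dimension would drop), so B\'ezout is non-degenerate; applying Kleiman's theorem to this morphism yields a proper, transverse, $0$-dimensional intersection of degree $\int (\pr_X^*\mathcal{O}(1))^{\,N-|a|}\cdot[Z\cap L^a]$. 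Since $\pr_X^*\mathcal{O}(1)=\mathcal{O}(\alpha)$ and $[Z\cap L^a]=h^a\cdot[Z]$ for general $L^a$, this degree equals $\int h^a\cdot\alpha^{N-\dim(X)}[Z]$. When $Z$ is non-reduced the geometric multiplicity $\mathfrak{m}$ with $[Z]=\mathfrak{m}[Z_{\rm red}]$ scales both the point count and the class identically, so the transversality argument may be run on $Z_{\rm red}$ and multiplied back.

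Finally, the equivalent statement $\Lambda_a(X\cap Z,Z)=0$ is immediate from the main equality: extracting the coefficient of $h^{n-a}$ in \eqref{eq:Lambda}, the only dimension-$|a|$ contribution of $\sum_{i}\alpha^{N-i}[Z]$ is the term $\alpha^{N-\dim(X)}[Z]$, whose $h^{n-a}$-coefficient is precisely $\int h^a\cdot\alpha^{N-\dim(X)}[Z]=g_a(X\cap Z,Z)$, so the two contributions to $\Lambda_a$ cancel.
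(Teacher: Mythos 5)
Your proof is correct and follows essentially the same route as the paper's: reduce to Proposition~\ref{propn:projective_degreeFirstMultiProj}, use the non-containment hypotheses to conclude $\dim(X\cap Z)<|a|$ so that a general $L^a$ misses $X\cap Z$ entirely, then evaluate $\deg(Z\cap L^a\cap W)$ by transversality of the general $L^a$ and $W$, and substitute into \eqref{eq:Lambda}. You merely spell out details the paper leaves implicit (the component-by-component dimension count, why base-point-freeness of the system on $Z\cap L^a$ lets Kleiman's theorem handle the $P_j$ even though they all vanish on $X$, and the non-reduced case), so this is an elaboration of the same argument rather than a different one.
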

\begin{proof}
By Proposition \ref{propn:projective_degreeFirstMultiProj} we have
that \[g_a(X \cap Z, Z)=\deg \left(Z \cap L^a \cap W -(X\cap Z) \right),\]
Since $Z$ is irreducible, the non-containment assumptions imply that $\dim(X \cap Z) < \dim X = |a|$. 
Since $L^a$ is a general linear space then,  $(Z \cap X) \cap L^a = \emptyset$.  
Therefore, we have that \[
  (Z\cap L^a\cap W)-(X\cap Z)=Z\cap L^a\cap W 
\]
and in particular the degrees match, proving the first equality.
Additionally, $L^a$ and $W$ are general, so the intersection is transverse and we have 
\[ 
[Z\cap L^a\cap W]=[Z] \cdot h^{a} \cdot \alpha^{N-|a|}.
\] 
Substituting this into \eqref{eq:Lambda} gives $\Lambda_a(X \cap Z, Z) = 0$.
\end{proof}

\begin{proposition} \label{corr:LambdaDimXReducible}
  Let $X$ and $Z$ be closed subschemes of $\pp^{\mathbf{n}}_m$ and let $Z_1,\dots,Z_t$ be the primary components of $Z$ with $Z_i\not \subset X$ for all $i$. Suppose that $X_{\rm red}\subset Z_i$ for $i\leq \rho$, and $X_{\rm red}\not \subset Z_i$ for $i>\rho$. Then 
\[
  \{\Lambda(X \cap Z, Z)\}_{\dim(X)}= \{\Lambda(X \cap Z,Z_1 \cup \cdots \cup Z_\rho)\}_{\dim(X)}.
\]
Hence, if $X \subset Z$ the $\dim(X)$ part of $s(X,Z)$ is
    \[
\{s(X,Z)\}_{\dim(X)}= \left\lbrace \alpha^{\dim(Z)-\dim(X)}[{Z}] - \sum_{|a|=\dim(X)} g_a(X,{Z}) \cdot [\pp^a_m]  \right\rbrace_{\dim(X)}.
\] \label{cor:SegreDimX} 
\end{proposition}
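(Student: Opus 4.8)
The plan is to extract, throughout, the coefficient of $h^{n-a}$ for $|a|=\dim(X)$, since by \eqref{eq:Lambda} together with Definition \ref{def:chowring} we have $\{\Lambda(X\cap Z,Z)\}_{\dim(X)}=\sum_{|a|=\dim(X)}\Lambda_a(X\cap Z,Z)\,h^{n-a}$, and for such $a$ the only summand of \eqref{eq:Lambda} landing in dimension $\dim(X)$ is $\alpha^{\dim(Z)-\dim(X)}[Z]$. Hence it suffices to analyse $\Lambda_a(X\cap Z,Z)=\{\alpha^{\dim(Z)-\dim(X)}[Z]\}_a-g_a(X\cap Z,Z)$ and to show that both terms split over the primary components. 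First I would invoke Proposition \ref{propn:projective_degreeFirstMultiProj} to write $g_a(X\cap Z,Z)=\deg(Z\cap L^a\cap W-X\cap Z)$, where $W$ is the complete intersection of \eqref{eq:Pjdef} of codimension $\dim(Z)-|a|$; since $|a|=\dim(X)$, the scheme $Z\cap L^a\cap W$ is cut out by $\dim(Z)$ general conditions and is zero-dimensional.

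The key step is additivity of this degree over the primary components. For generic $L^a$ and $W$ the finite point set $Z\cap L^a\cap W$ avoids the embedded points of $Z$ and the pairwise intersections $(Z_i)_{\mathrm{red}}\cap (Z_j)_{\mathrm{red}}$, which have dimension $<\dim(Z)$ and are therefore met emptily after imposing $\dim(Z)$ general conditions; moreover only the components $Z_i$ with $\dim(Z_i)=\dim(Z)$ can meet $L^a\cap W$ at all, the lower-dimensional ones being over-cut. Near each surviving point exactly one component passes, so the local contribution agrees with the one computed on that component, giving $g_a(X\cap Z,Z)=\sum_i g_a(X\cap Z_i,Z_i)$ for $|a|=\dim(X)$. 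The same dimension count shows that only top-dimensional components contribute to $\{\alpha^{\dim(Z)-\dim(X)}[Z]\}_{\dim(X)}$, and since $[Z]=\sum_i[Z_i]$ on these components the cycle term splits identically. Combining, I obtain $\Lambda_a(X\cap Z,Z)=\sum_i\Lambda_a(X\cap Z_i,Z_i)$ for $|a|=\dim(X)$. Now Lemma \ref{propn:reducible_ProjDeg} applies to each $Z_i$ with $i>\rho$: there $Z_i$ is irreducible with $(Z_i)_{\mathrm{red}}\not\subset X$ and $X_{\mathrm{red}}\not\subset Z_i$, so $\Lambda_a(X\cap Z_i,Z_i)=0$. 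Thus only the indices $i\le\rho$ survive, and reassembling the same additivity for $Z'=Z_1\cup\cdots\cup Z_\rho$ (using that $Z$ is pure-dimensional, so $\dim(Z')=\dim(Z)$) yields the first claimed equality $\{\Lambda(X\cap Z,Z)\}_{\dim(X)}=\{\Lambda(X\cap Z,Z')\}_{\dim(X)}$.

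For the \emph{Hence} statement assume $X\subset Z$, so that $X\cap Z=X$ and $g_a(X\cap Z,Z)=g_a(X,Z)$. The first step of the Segre recursion of Theorem \ref{MainTheorem2MultiProj}, taken at $|a|=\dim(X)$ where the correction sum $\sum_{|i|>|a|}$ is empty, gives $s_a(X,Z)=\Lambda_a(X,Z)$; this step rests only on the relation obtained from \cite[Proposition 4.4]{fulton2013intersection}, so it transfers to the pure-dimensional (possibly reducible) scheme $Z$ by applying it to each irreducible component and summing via the additivity just established. Unwinding with $\Lambda_a(X,Z)=\{\alpha^{\dim(Z)-\dim(X)}[Z]\}_a-g_a(X,Z)$ and $[\pp^a_m]=h^{n-a}$ from Definition \ref{def:chowring}, and noting that $\alpha^{\dim(Z)-\dim(X)}[Z]$ lies entirely in dimension $\dim(X)$, gives $\{s(X,Z)\}_{\dim(X)}=\{\alpha^{\dim(Z)-\dim(X)}[Z]-\sum_{|a|=\dim(X)}g_a(X,Z)[\pp^a_m]\}_{\dim(X)}$, as required. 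The main obstacle I anticipate is making the additivity over primary components fully rigorous: one must pin down the genericity of $L^a$ and $W$ so that the zero-dimensional locus is disjoint from the embedded components and from the pairwise intersections of the $(Z_i)_{\mathrm{red}}$, and one must justify that the top-degree coefficient of the Segre class is additive over the components of a pure-dimensional scheme, which is what lets the first recursion step pass from the irreducible case of Theorem \ref{MainTheorem2MultiProj} to reducible $Z$.
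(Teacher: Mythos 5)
Your proposal is correct and follows essentially the same route as the paper's own proof: write $g_a(X\cap Z,Z)$ as the degree of a zero-dimensional set via Proposition \ref{propn:projective_degreeFirstMultiProj}, split that degree over the primary components using the genericity of $L^a$ and $W$, cancel the components with $X_{\rm red}\not\subset Z_i$ via Lemma \ref{propn:reducible_ProjDeg}, and deduce the \emph{hence} statement from the first step of the recursion of Theorem \ref{MainTheorem2MultiProj}. If anything, your justification of that last step (applying the recursion componentwise and summing via additivity of Segre classes over irreducible components) is more explicit than the paper's terse appeal to the recursion, so no gap remains.
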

\begin{proof}
Fix $a\in \ZZ_{>0}^m$ such that $|a|= \dim(X)$. Again we have
\[ g_a(X \cap Z,Z) = \deg((Z \cap L^a \cap W) - (X \cap Z)). \]
This can be expanded as
\begin{align*}
g_a(X \cap Z,Z) &= \deg(Z \cap L^a \cap W - (X \cap Z))\\
&= \sum_{i=1}^t \;  \deg(Z_i \cap L^a \cap W - (X \cap Z))
\end{align*}
where the second equality follows from the fact that $L^a$ and $W$ are general, so the finite set
$Z_i \cap L^a \cap W - (X\cap Z)$ is disjoint from $Z_j \cap L^a \cap W - (X \cap Z)$ when $i\neq j$. 
Since none of $Z_{\rho+1},\dots, Z_t$ contain $X$, by Lemma \ref{propn:reducible_ProjDeg} we have that 
\[
\sum_{i=\rho+1}^t \;  \deg(Z_i \cap L^a \cap W) = \sum_{i=\rho+1}^t \;  \int L^a \cdot \alpha^{N-|a|} [Z_i].
\]By definition,
\[\{\Lambda({X \cap Z,Z})\}_{\dim(X)} =  \alpha^{N-\dim(X)}[Z] - \sum_{|a|=\dim(X)} g_{a}(X \cap Z,Z)
  [\pp^a_m]\]
and the first term is $\sum_i \alpha^{N-\dim(X)}[Z_i]$, we have shown that the summands corresponding to
$Z_{\rho+1},\dots,Z_t$ cancel, and so
\begin{align*}
  \{\Lambda({X \cap Z,Z})\}_{\dim(X)} &=  \sum_{i=1}^t \; \alpha^{N-\dim(X)}[Z_i] - \sum_{|a|=\dim(X)} g_{a}(X \cap Z,Z) [\pp^a_m]\\
  &=  \sum_{i=1}^\rho \; \alpha^{N-\dim(X)}[Z_i] - \sum_{|a|=\dim(X)} g_{a}(X \cap \hat{Z},\hat{Z}) [\pp^a_m]\\
  &=\{\Lambda({X \cap \hat{Z}, \hat{Z}})\}_{\dim(X)},
\end{align*}
where $\hat{Z} = Z_1 \cup \cdots \cup Z_\rho$.

Now suppose $X \subset Z$. Then $Z = \hat Z$ and the proof of the recursive formula of Theorem \ref{MainTheorem2MultiProj} (which begins in dimension $\dim(X)$), applied to this case, gives us that the dim($X$) part of $s(X,Z)$ is 
\[
\{s(X,Z)\}_{\dim(X)}=\alpha^{\dim(Z)-\dim(X)}[Z] - \sum_{|a|=\dim(X)} g_a(X,Z) \cdot[\pp^a_m]. \qedhere
\]
\end{proof}

Note that, in particular, Proposition \ref{cor:SegreDimX} means that even if $Z$ is a reducible subscheme of $\pp^{\bf n}_m$ we can compute $\{s(X,Z)\}_{\dim(X)}$ using Theorem \ref{MainTheorem1_Multi_proj} directly, \textit{without} knowing the irreducible components of $Z$. Further, even if $Z$ is an arbitrary (possibly reducible) subscheme of $\pp^{\bf n}_m$ the result of Theorem \ref{MainTheorem2MultiProj} above can still be used to compute the entire Segre class $s(X,Z)$, provided we can compute the irreducible components of $Z$. In particular, if $Z$ has irreducible (and reduced) components $Z_1,\dots, Z_s$ where $Z_i$ has geometric multiplicity $\mathfrak{m}_i$, then  \cite[Lemma~4.2]{fulton2013intersection} gives $$
  s(X,Z)=\sum_{i=1}^s\mathfrak{m}_is(X\cap Z_i,Z_i).
  $$   
\subsection{Computing Segre classes in $A^*(T_\Sigma)$}\label{subsec:SegSubToric}
In this subsection we generalize the results of \S\ref{subSec:SegreMultiProj} to a subscheme of a smooth projective toric variety $T_\Sigma$ defined by a fan $\Sigma$. 
While the majority of the results from the previous section carry over mutatis mutandis, a few details should be discussed.
The main purpose of this section is to record these differences.
The Chow ring $A^*(T_\Sigma)$ of $T_\Sigma$ is explicitly determined by the fan $\Sigma$; see \cite[\S12.5]{cox2011toric} for a discussion of this. A Cartier divisor $D$
on $T_\Sigma$ is termed \textit{numerically effective} or \textit{nef} if
$D\cdot C \geq 0$ for every irreducible complete curve $C\subset
T_\Sigma$.

\begin{remark}
Proposition 6.3.24 of \cite{cox2011toric} tells us that when
$T_{\Sigma}$ is a smooth projective toric variety there exists a basis for
$A^1(T_{\Sigma})$ consisting of nef divisors.
Each divisor in this basis is base-point free since by Theorem 6.3.12
of \cite{cox2011toric} a Cartier divisor $D$ on $T_\Sigma$ is nef
if and only if $D$ is base-point free. \label{remark:toricBasepointFree}
\end{remark}

\subsubsection{Segre class computation for subschemes of $T_{\Sigma}$}
Remark \ref{remark:toricBasepointFree} allows us to generalize many results in \S\ref{subSec:SegreMultiProj} from the ambient space $\pp^{n_1}\times \cdots\times \pp^{n_m}$ to any smooth projective toric variety in a fairly straightforward fashion. The main practical difference is that an analogue to the simple dehomogenization method used in Theorem \ref{MainTheorem1_Multi_proj} may not exist. See \cite[\S3.1]{helmer2017toric} for a discussion of this. Below we state the results from \S\ref{subSec:SegreMultiProj} that continue to hold in this more general setting. We will again employ the notation of \S\ref{subsec:ProjectiveDegrees} for the projective degrees. 

Let $b_1,\dots, b_{m} \in A^1(T_{\Sigma})$ be a fixed nef basis for $A^1(T_{\Sigma})$. 
We express the rational equivalence class of a point in $T_\Sigma$ as the monomial $b^n=b_1^{n_1}\cdots b_{m}^{n_m}$.  
The monomials $b^e$ which divide $b^n$ in $A^*(T_\Sigma)$ form a basis for $A^{|e|}(T_\Sigma)$. 
\begin{convention}
Let $T_\Sigma$ be a smooth projective toric variety. For pairs $X\subset Y \subset T_\Sigma$ we will frequently use the following conventions:
\begin{enumerate}
\item $Y \subset T_\Sigma$ is an irreducible subscheme of dimension $N = \dim(Y)$,
\item $X \subset Y$ is a closed subscheme defined by $\alpha$-homogeneous polynomials $f_0,\dots,f_r$ in the Cox ring $R$ of $T_\Sigma$.
\end{enumerate}\label{conv:X_Y_Toric}
\end{convention}
Recall that the $\alpha$-homogeneity assumption on the defining equations of $X$ is made without loss of generality; see \S\ref{subsubsec:HomGens} and Definition \ref{def:alpha-homogeneous}.
\begin{proposition}
The projective degrees of $X$ in $Y$ are given by
\[ g_{a}(X,Y) = \deg \left( \left(Y \cap L^a \cap W \right) - X\right)\] where $[L^a] = b^a$ and $
W=\mathbb{V}(P_1,\dots,P_{\dim(Y)-|a|})$, with $ P_j=\sum_i \lambda_i f_i
$
for general $\lambda_i\in k$. 
\label{propn:projective_degreeToric}
\end{proposition}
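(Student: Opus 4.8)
The plan is to reproduce the argument of Proposition \ref{propn:projective_degreeFirstMultiProj} essentially verbatim, with the single modification that the linear forms used to cut out $L^a$ in a product of projective spaces are replaced by general members of the base-point-free linear systems attached to the nef basis $b_1,\dots,b_m$. I would begin from the definition of the projective degrees (Definition \ref{def:projDegrees}), which expresses
\[
g_a(X,Y) = \int b^a \cdot \left[ \overline{\pr_X^{-1}\left(\pp^{r-(\dim(Y)-|a|)}\right) - X} \right].
\]
Writing $\mathfrak{L} = \pp^{r-(\dim(Y)-|a|)} \subset \pp^r$ for a general linear subspace of codimension $\dim(Y)-|a|$, the rational map $\pr_X : p \mapsto (f_0(p):\cdots:f_r(p))$ pulls $\mathfrak{L}$ back to the complete intersection $\pr_X^{-1}(\mathfrak{L}) = W = \mathbb{V}(P_1,\dots,P_{\dim(Y)-|a|})$, where each $P_j = \sum_i \lambda_i f_i$ is a general $k$-linear combination of the $\alpha$-homogeneous generators $f_i$; this is exactly the identification already built into the definition.

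Next I would realize the factor $b^a$ geometrically. By Remark \ref{remark:toricBasepointFree}, each basis divisor $b_i$ is nef and hence base-point free, so a general member of the linear system $|b_i|$ moves without base points. Taking $L^a$ to be a complete intersection of general such members realizing $[L^a] = b^a$, Kleiman's transversality theorem \cite{kleiman1974transversality} (applicable since we work over an algebraically closed field of characteristic zero) guarantees that $Y$, $L^a$, and $W$ meet properly and that the zero-dimensional locus $(Y \cap L^a \cap W) - X$ is reduced. Consequently the intersection number $\int b^a \cdot [\overline{\pr_X^{-1}(\mathfrak{L}) - X}]$ equals $\deg\big((Y \cap L^a \cap W) - X\big)$, which is the claimed formula.

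The main obstacle is not a new calculation but the existence of ``general enough'' choices in the toric setting, where the divisors $b_i$ are no longer literal hyperplanes and one cannot simply dehomogenize by linear forms. The essential input is precisely the equivalence of nef and base-point free on a smooth projective toric variety recorded in Remark \ref{remark:toricBasepointFree}: base-point-freeness is exactly the hypothesis needed to apply Kleiman's theorem to the moving systems $|b_i|$. One must also respect the caveat of Remark \ref{remark:GeneralChoiceOFLinSpace}, namely that $L^a$ must avoid the distinguished subvarieties of $X$ in $Y$, so that $\pi^{-1}(L^a)$ does not absorb components of the exceptional divisor of the blowup $\Gamma$ and thereby fail to have the expected codimension. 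Once these genericity conditions are secured, the equality of the intersection-theoretic definition with the naive degree follows exactly as in the product-of-projective-spaces case of Proposition \ref{propn:projective_degreeFirstMultiProj}.
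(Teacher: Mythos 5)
Your proposal is correct and follows essentially the same route as the paper: the paper's proof likewise observes that the nef basis divisors $b_i$ are base-point free (Remark \ref{remark:toricBasepointFree}), so Kleiman's transversality theorem applies, and then transports the argument of Proposition \ref{propn:projective_degreeFirstMultiProj} verbatim with the $b_i$ in place of the hyperplane classes $h_i$. Your additional remarks on genericity (avoiding the distinguished subvarieties, as in Remark \ref{remark:GeneralChoiceOFLinSpace}) are consistent with, and slightly more explicit than, the paper's treatment.
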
\begin{proof}
The divisors $b_i$ are nef, meaning the corresponding line bundles are generated by global sections and Kleiman's transversality theorem \cite{kleiman1974transversality} applies. Hence, the proof is entirely analogous to that of Proposition \ref{propn:projective_degreeFirstMultiProj} with the $b_i$ taking the place of the $h_i$.
\end{proof}
\begin{remark}
To apply Proposition \ref{propn:projective_degreeToric} above in practice, one must first explicitly compute the saturation $J=(\mathcal{I}_Y+\mathcal{I}_{L^a}+\mathcal{I}_W):\mathcal{I}_X^\infty$ in the multi-graded Cox ring $R$. This step is often quite computationally expensive, and is avoided in Theorem \ref{MainTheorem1_Multi_proj} because we can always dehomogenize. Having computed $J$, one must then employ some method to count (with multiplicities) the points in the scheme $S$ defined by ${J}$. In particular, as noted above, there is in general no analogue of Theorem \ref{MainTheorem1_Multi_proj} in this setting, which limits the methods that may be applied for this later computation as well. In \cite[Theorem~3.1]{helmer2017toric} a combinatorial criterion for the existence of an analogue to Theorem \ref{MainTheorem1_Multi_proj} is given, however this criterion is quite restrictive. One broadly applicable approach is to use symbolic methods based on results of \cite{miller2004combinatorial} such as the \texttt{Macaulay2} \cite{M2} command \texttt{multidegree} to count the points in $S$. A numeric approach using, for example, homotopy continuation and Cox's geometric quotient construction \cite[\S5]{cox2011toric} is theoretically possible, however to the best of our knowledge no such implementation exists. In practice this means that computations involving subschemes of an arbitrary smooth projective toric variety $T_\Sigma$ (to yield Segre classes in $A^*(T_\Sigma)$) are possible, but tend to be more computationally expensive and have less flexibility in the underlying computational methods we can employ to implement them.   \end{remark}
\begin{theorem}

Write the Segre class as $$s(X,Y) \;\;=\hspace{-1mm}\sum_{|i|\leq \dim(X)}  s_{i}(X,Y) b^{n-i}.$$ With $\Lambda(X,Y)$ as in \eqref{eq:Lambda} we have the following recursive formula for $s(X,Y)$:
\[
s_{a}(X,Y)=\Lambda_{a}(X,Y)-\int (1+\alpha)^{N-|a|} b^a \sum_{{|i|>|a|} }s_{i}(X,Y) b^{n-i} \;.
\] \label{thm:MainSegreToric}
\end{theorem}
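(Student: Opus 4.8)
The plan is to run the proof of Theorem~\ref{MainTheorem2MultiProj} essentially verbatim, replacing the hyperplane classes $h_i$ of $\pp^{\mathbf n}_m$ throughout by the fixed nef basis $b_1,\dots,b_m$ of $A^1(T_\Sigma)$ and each $h^a$ by $b^a$. Every geometric ingredient used there has already been put in place for $T_\Sigma$: Proposition~\ref{propn:projective_degreeToric} expresses the projective degrees as degrees of the zero-dimensional sets $(Y\cap L^a\cap W)-X$ with $[L^a]=b^a$, and Remark~\ref{remark:projDegLinSlice} supplies the slicing identity $g_a(X,Y)=g_0(X\cap L^a,Y\cap L^a)$ in the toric setting. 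The only structural difference from the multi-projective case is that the transversality needed to realize these slices comes from the nef (hence, by Remark~\ref{remark:toricBasepointFree}, base-point-free) basis divisors, so that Kleiman's theorem applies to general members of the linear systems $|b_i|$ exactly as it did for the $|h_i|$.

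First I would treat the case where $Y$ is a variety. Applying Proposition~4.4 of \cite{fulton2013intersection} to the pair $(X\cap L^a, Y\cap L^a)$, and simplifying the two terms exactly as in the proof of Theorem~\ref{MainTheorem2MultiProj} (using $[Y\cap L^a]=b^a[Y]$ for the first term and the compatibility of Segre classes with general slicing for the second), gives
\[
g_a(X,Y)=g_0(X\cap L^a,Y\cap L^a)=\int \alpha^{N-|a|} b^a\,[Y]-\int (1+\alpha)^{N-|a|} b^a\frown s(X,Y).
\]
Summing over $|a|\le\dim(Y)$ and isolating the term $\sum_i\alpha^{N-i}[Y]$ reproduces the toric analogue of \eqref{eq:segProofFormula}; since the Segre contribution vanishes for $|a|>\dim(X)$, subtracting $G(X,Y)$ from $\sum_i\alpha^{N-i}[Y]$ yields
\[
\Lambda(X,Y)=\sum_{|a|\le\dim(X)}\left(\int (1+\alpha)^{N-|a|} b^a\frown s(X,Y)\right) b^{n-a}.
\]
Reading off coefficients from the top, the case $|a|=\dim(X)$ gives $s_a(X,Y)=\Lambda_a(X,Y)$, and descending through $|a|=\dim(X)-1,\dots,0$ produces the stated recursion.

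To remove the assumption that $Y$ is a variety I would repeat the reduction from the proof of Theorem~\ref{MainTheorem2MultiProj}. Writing $[Y]=\mathfrak{m}[Y_{\mathrm{red}}]$, Lemma~4.2 of \cite{fulton2013intersection} gives $s(X,Y)=\mathfrak{m}\cdot s(X\cap Y_{\mathrm{red}},Y_{\mathrm{red}})$, while the projective degrees count points with geometric multiplicity, so $g_a(X,Y)=\mathfrak{m}\cdot g_a(X\cap Y_{\mathrm{red}},Y_{\mathrm{red}})$ and hence $\Lambda(X,Y)=\mathfrak{m}\cdot\Lambda(X\cap Y_{\mathrm{red}},Y_{\mathrm{red}})$. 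Since $\mathfrak{m}$ factors out of both sides of the recursion, the variety case implies the general one.

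The step requiring the most care is the coefficient extraction, because in $A^*(T_\Sigma)$ the point class $b^n$ need not have a unique monomial representative and non-effective divisors may occur (as in the toric example preceding this theorem). I would emphasize that this causes no difficulty: the quantities $\int (1+\alpha)^{N-|a|} b^a\frown s(X,Y)$ are honest intersection numbers, independent of any choice of representative, and the monomials $b^e$ dividing $b^n$ do form a basis of $A^{|e|}(T_\Sigma)$, so the pairing defining each $\Lambda_a(X,Y)$ and $s_a(X,Y)$ is well defined. With this observed, the descending recursion closes exactly as in the multi-projective setting.
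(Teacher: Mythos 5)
Your proposal is correct and follows exactly the route the paper takes: the paper's own proof of Theorem \ref{thm:MainSegreToric} is literally ``analogous to that of Theorem \ref{MainTheorem2MultiProj}, with $b_i$ replacing $h_i$,'' and you have carried out that substitution faithfully, invoking Proposition \ref{propn:projective_degreeToric}, Remark \ref{remark:projDegLinSlice}, the base-point-freeness of the nef basis (Remark \ref{remark:toricBasepointFree}) for Kleiman transversality, and the same reduction to the variety case via geometric multiplicity. Your extra care about the non-uniqueness of the point-class representative $b^n$ is a welcome clarification of a point the paper only flags in an example, but it does not change the argument.
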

\begin{proof}
The proof is analogous to that of Theorem \ref{MainTheorem2MultiProj}, with $b_i$ replacing $h_i$.
\end{proof}
 \begin{corollary}
  Let $X$ and $Z$ be closed subschemes of $T_\Sigma$ and let $Z_1,\dots,Z_t$ be the primary components of $Z$ with $Z_i\not \subset X$ for all $i$. Suppose that $X_{\rm red}\subset Z_i$ for $i\leq \rho$, and $X_{\rm red}\not \subset Z_i$ for $i>\rho$. Then 
\[
  \{\Lambda(X \cap Z, Z)\}_{\dim(X)}= \{\Lambda(X \cap Z,Z_1 \cup \cdots \cup Z_\rho)\}_{\dim(X)}.
\]
Hence, if $X \subset Z$ the $\dim(X)$ part of $s(X,Z)$ is
    \[
\{s(X,Z)\}_{\dim(X)}= \left\{\alpha^{\dim(Z)-\dim(X)}[{Z}] - \sum_{|a|=\dim(X)} g_a(X,{Z}) \cdot b^{n-a}\right\}_{\dim(X)} .
\] \label{cor:SegreDimXToric}

\end{corollary}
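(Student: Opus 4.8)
The plan is to prove Corollary~\ref{cor:SegreDimXToric} by observing that it is the toric analogue of Proposition~\ref{cor:SegreDimXReducible}, and that the proof of that proposition only used general-position facts and the recursive structure from Theorem~\ref{MainTheorem2MultiProj}, all of which remain valid in $T_\Sigma$ once we work with the nef basis $b_1,\dots,b_m$ in place of the hyperplane classes $h_1,\dots,h_m$. First I would fix $a\in\ZZ_{>0}^m$ with $|a|=\dim(X)$ and invoke Proposition~\ref{propn:projective_degreeToric} to write $g_a(X\cap Z,Z)=\deg\bigl((Z\cap L^a\cap W)-(X\cap Z)\bigr)$, where $[L^a]=b^a$ and $W$ is as in that proposition. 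The key point, exactly as before, is that $L^a$ and $W$ are general, so the zero-dimensional set decomposes as a disjoint union over the primary components $Z_1,\dots,Z_t$ of $Z$, giving
\[
g_a(X\cap Z,Z)=\sum_{i=1}^t \deg\bigl(Z_i\cap L^a\cap W-(X\cap Z)\bigr).
\]

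Next I would apply Lemma~\ref{propn:reducible_ProjDeg}, which is stated for an irreducible $Z$ and whose hypotheses ($Z_{\rm red}\not\subset X$ and $(X_j)_{\rm red}\not\subset Z$) hold for each component $Z_i$ with $i>\rho$ by assumption. This lemma was proved using only Proposition~\ref{propn:projective_degreeFirstMultiProj} together with Kleiman transversality; since Proposition~\ref{propn:projective_degreeToric} supplies the toric version of the former and the $b_i$ being base-point free (Remark~\ref{remark:toricBasepointFree}) supplies Kleiman transversality, the lemma carries over verbatim with $h^a$ replaced by $b^a$. Thus for $i>\rho$ we get $\deg(Z_i\cap L^a\cap W)=\int b^a\cdot\alpha^{N-\dim(X)}[Z_i]$, and the dimension-$X$ coefficient of $\Lambda$ for these components matches the corresponding term $\int b^a\cdot\alpha^{N-\dim(X)}[Z_i]$ coming from the $\alpha^{N-\dim(X)}[Z]$ summand. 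Hence the contributions of $Z_{\rho+1},\dots,Z_t$ cancel in $\{\Lambda(X\cap Z,Z)\}_{\dim(X)}$, yielding the first claimed equality with $\hat Z=Z_1\cup\cdots\cup Z_\rho$.

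For the second assertion I would specialize to $X\subset Z$, so that every $Z_i$ contains $X_{\rm red}$, forcing $\rho=t$ and $Z=\hat Z$. Then I would invoke Theorem~\ref{thm:MainSegreToric}: its recursion starts precisely at dimension $\dim(X)$, where the sum $\sum_{|i|>|a|}$ is empty, so the initial step reads $s_a(X,Z)=\Lambda_a(X,Z)$. Reassembling over all $a$ with $|a|=\dim(X)$ and substituting the definition \eqref{eq:Lambda} of $\Lambda$ gives
\[
\{s(X,Z)\}_{\dim(X)}=\Bigl\{\alpha^{\dim(Z)-\dim(X)}[Z]-\sum_{|a|=\dim(X)}g_a(X,Z)\cdot b^{n-a}\Bigr\}_{\dim(X)},
\]
as required. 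I expect no serious obstacle here: the only thing to watch is the bookkeeping between dimension and codimension indexing (the point class $b^n$ need not have a unique representative in a general toric variety), so I would take care to write the dimension-$X$ part invariantly as $\{-\}_{\dim(X)}$ rather than tracking individual monomials, mirroring the care taken in the toric examples earlier in the paper. Given that every ingredient—Proposition~\ref{propn:projective_degreeToric}, Lemma~\ref{propn:reducible_ProjDeg}, and Theorem~\ref{thm:MainSegreToric}—has already been established in the toric setting, the proof reduces to repeating the argument of Proposition~\ref{cor:SegreDimXReducible} mutatis mutandis, which is what I would state.
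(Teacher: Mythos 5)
Your proposal is correct and follows essentially the same route as the paper: the paper's proof simply notes that Lemma~\ref{propn:reducible_ProjDeg} carries over to $A^*(T_\Sigma)$ (with projective degrees computed via Proposition~\ref{propn:projective_degreeToric}, justified by the nef basis being base-point free so Kleiman transversality applies) and then repeats the argument of Proposition~\ref{cor:SegreDimX} verbatim, which is precisely the component-by-component cancellation and the first step of the recursion of Theorem~\ref{thm:MainSegreToric} that you spell out. Your write-up is a more detailed expansion of the same argument, including the same final reduction to $Z=\hat Z$ that the paper itself uses.
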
 
\begin{proof} The result corresponding to Lemma \ref{propn:reducible_ProjDeg} holds in the Chow ring $A^*(T_\Sigma)$ where the projective degrees are computed using Proposition \ref{propn:projective_degreeToric}. Using this result the remainder of the proof is identical to that of Proposition \ref{cor:SegreDimX}.
  \end{proof}

 \begin{remark}
Remark \ref{remark:GeneralChoiceOFLinSpace} generalizes immediately to the case where $\pp^{n_1}\times\cdots\times\pp^{n_m}$ is replaced by a smooth projective toric variety $ T_\Sigma$ with fixed nef basis.
 \end{remark}

\section{Intersection theory}
\label{sec:intersection-theory}
In this section we employ the results from \S\ref{sec:MainResults} to give a method to compute the intersection product of two varieties inside another (for more on intersection products see \cite[\S6]{fulton2013intersection}).
Fix a smooth projective toric variety $T_\Sigma$.
Let $Y \subset T_\Sigma$ be a nonsingular variety
with $X \subset Y$ a regular embedding and $V \subset Y$ any subvariety.
The standard setup for an intersection product is the fiber square
\begin{equation}
 \begin{tikzcd}
  X \cap V \ar[r,"e_1"] \ar[d,"e_2"] & V \ar[d]\\
  X \ar[r] & |[alias=Y]| Y \rlap{\ .}
 \end{tikzcd}
 \label{fig:std-int-square}
\end{equation}

We would like to compute the intersection product $X \cdot_Y V \in A_*(X \cap V)$. This is hopeless in
the generality stated since we can't a priori know anything about the Chow ring of an arbitrary variety. However, we
could learn the pushforward of the intersection product to a class in $A^*(T_\Sigma)$. In this computation there are two ingredients: the total Chern class $c(N_X Y)$ of the normal bundle to $X$ in $Y$, and the Segre class $s(X \cap
 V,V)$. The intersection product \cite[Proposition 6.1]{fulton2013intersection},
 denoted $X \cdot_Y V$, is the part of the class $c(e_2^*N_X Y) \frown s(X \cap V,
 V) $ in the \emph{expected dimension} $\dim(V)-(\dim(Y)-\dim(X))$, that is
\[ X \cdot_Y V = \left\{ c(e_2^*N_X Y) \frown s(X \cap V, V) \right\}_{exp.~dim.} \in A_*(X \cap V). \]
If we know both pushforwards to the ambient projective space, it is possible to perform the computation there. However, often the Chern class $c(N_X Y)$ is difficult (or impossible) to obtain in practice. 
We can get around this by a reduction to the diagonal:
\begin{equation}
  \begin{tikzcd}
  X \cap V \ar[r] \ar[d,"i"] & X \times V \ar[d]\\
  Y \ar[r] \ar[d,"j"] & Y \times Y \ar[d] \\
T_\Sigma \ar[r,"\Delta"] & |[alias=T]| T_\Sigma \times T_\Sigma \rlap{\ .}
 \end{tikzcd} \label{fig:reduction-diag}
\end{equation}

The intersection product corresponding to the upper square in \eqref{fig:reduction-diag} is the same class $X \cdot_Y V$ in $A_*(X \cap V)$ as that associated to \eqref{fig:std-int-square}.
For this intersection product the required Segre class is $s(X \cap V, X \times V)$ and the Chern class is simply the Chern class $c(TY)$ of the tangent bundle. Theorem \ref{thm:intersectionProduct} allows us to compute the pushforward to $A^*(T_\Sigma)$ of the intersection product $X \cdot_Y V$. We use the notation from \eqref{fig:reduction-diag} in the statement of Theorem \ref{thm:intersectionProduct} below.

\begin{theorem}\label{thm:intersectionProduct}
  Suppose $TY$ is the pullback $j^*E$ of some vector bundle on $T_\Sigma$.
 Then the class $X \cdot_Y V$, pushed forward to $T_\Sigma$, is given
 by \[
   j_* i_* (X \cdot_Y V) = \left\{ c(E) \frown \Delta^*\underline{s}(X
     \cap V, X \times V) \right\}_{exp.~dim.}\in A^*(T_\Sigma),
 \]
 where
 $\underline{s}(X \cap V, X \times V)$ denotes the pushforward to $T_\Sigma \times T_\Sigma$.
 \end{theorem}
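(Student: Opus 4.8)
The plan is to reduce the computation to the diagonal via the multi-step fiber diagram in \eqref{fig:reduction-diag}, and then invoke the definition of the intersection product together with the Segre-class formula as a pushforward to $T_\Sigma$. The key conceptual input is the reduction to the diagonal: the intersection product $X \cdot_Y V$ associated to the standard fiber square \eqref{fig:std-int-square} coincides with the one obtained by intersecting $X \times V$ with the diagonal inside $Y \times Y$, as explained in the text preceding the statement. For the diagonal setup the relevant Segre class is $s(X \cap V, X \times V)$ and the relevant Chern class is that of the tangent bundle $TY$, so that
\[
  X \cdot_Y V = \left\{ c(TY) \frown s(X \cap V, X \times V) \right\}_{exp.~dim.} \in A_*(X \cap V).
\]

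First I would make precise the reduction to the diagonal. The diagonal embedding $Y \hookrightarrow Y \times Y$ is a regular embedding (since $Y$ is nonsingular) whose normal bundle is $TY$. Intersecting $X \times V \subset Y \times Y$ with this diagonal recovers $X \cap V$ with the same fiber-square data, and by \cite[Proposition~6.1]{fulton2013intersection} the resulting intersection product is $\{ c(TY) \frown s(X \cap V, X \times V)\}_{exp.~dim.}$. This matches the class $X \cdot_Y V$ of \eqref{fig:std-int-square}, as asserted in the paragraph following \eqref{fig:reduction-diag}.

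Next I would push the whole formula forward to $T_\Sigma$ and handle the Chern class. Pushing forward along $i$ and then $j$ commutes with capping by Chern classes only after one uses the projection formula, and this is where the hypothesis $TY = j^*E$ becomes essential: since $c(TY) = c(j^*E) = j^*c(E)$, the projection formula $j_*\!\left( j^*c(E) \frown \beta \right) = c(E) \frown j_*\beta$ lets us move the Chern class outside the pushforward, turning the intrinsic class $c(TY)$ on $Y$ into the globally defined class $c(E)$ on $T_\Sigma$. Applying this together with the compatibility of pushforward with the ``expected dimension'' truncation (pushforward preserves dimension of cycles) yields
\[
  j_* i_*(X \cdot_Y V) = \left\{ c(E) \frown j_* i_*\, s(X \cap V, X \times V) \right\}_{exp.~dim.},
\]
and $j_* i_*\, s(X \cap V, X \times V)$ is exactly $\Delta^* \underline{s}(X \cap V, X \times V)$ by the definition of $\underline{s}$ as the pushforward to $T_\Sigma \times T_\Sigma$ combined with the diagonal pullback $\Delta^*$; this last identification is the content of the compatibility between refined Gysin pullback along $\Delta$ and the ordinary pushforwards in the tower \eqref{fig:reduction-diag}.

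\emph{The hard part} will be justifying the passage from the Segre class $s(X \cap V, X \times V)$ living in $A_*(X \cap V)$ to the expression $\Delta^* \underline{s}(X \cap V, X \times V)$ in $A^*(T_\Sigma)$: one must check that pushing the Segre class out to $T_\Sigma \times T_\Sigma$ and then applying the refined intersection with the diagonal $\Delta$ reproduces the pushforward of the intrinsic intersection product, i.e.\ that $\Delta^*$ and the pushforwards $i_*, j_*$ interact correctly. This requires the commutativity of refined Gysin maps with proper pushforward \cite[Theorem~6.2]{fulton2013intersection}, and some care that the ``expected dimension'' part is preserved under these operations. The Chern-class step, by contrast, is essentially formal once the hypothesis $TY = j^*E$ and the projection formula are in hand.
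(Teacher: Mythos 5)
Your reduction to the diagonal and the two applications of the projection formula are exactly the paper's argument, and they are correct: $X\cdot_Y V=\left\{c(i^*TY)\frown s(X\cap V,X\times V)\right\}_{exp.~dim.}$ by \cite[Proposition~6.1]{fulton2013intersection} (the normal bundle of the diagonal being $TY$), and since $c(TY)=j^*c(E)$ and proper pushforward preserves dimension (hence commutes with the expected-dimension truncation), one gets $j_*i_*(X\cdot_Y V)=\left\{c(E)\frown j_*i_*\,s(X\cap V,X\times V)\right\}_{exp.~dim.}$.

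The genuine gap is in the final step, which you flag as ``the hard part'' and propose to settle by reading $\Delta^*$ as a refined Gysin pullback and invoking the commutativity of refined Gysin maps with proper pushforward \cite[Theorem~6.2]{fulton2013intersection}. Under that reading the needed identity $j_*i_*\,s=\Delta^*\underline{s}$ is not delicate but false: a Gysin pullback along $\Delta$ lowers dimension by $\dim T_\Sigma$, and what commutativity together with the excess intersection formula actually yield is $\Delta^*\underline{s}=c_{top}(TT_\Sigma)\frown j_*i_*\,s$, which vanishes whenever $\dim(X\cap V)<\dim T_\Sigma$. You can see the failure concretely in Example \ref{ex:quadricSegre}: there the pushforward of the Segre class to $\pp^3\times\pp^3$ is $h_1^3h_2^3$, a zero-dimensional class, so its Gysin pullback lies in $A_{-3}(\pp^3)=0$, while the theorem needs $\Delta^*(h_1^3h_2^3)=h^3$. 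The intended meaning of $\Delta^*$ in the statement is different, and it makes this step essentially tautological, which is how the paper's one-line justification works: because the inclusion of $X\cap V$ into $T_\Sigma\times T_\Sigma$ factors through the diagonal, $\underline{s}=\Delta_*\left(j_*i_*\,s\right)$; since $p_1\circ\Delta=\mathrm{id}$, the map $\Delta_*$ is split injective, and $\Delta^*$ denotes its inverse on classes supported on the diagonal (computable as $p_{1*}$, which preserves dimension). Hence $\Delta^*\underline{s}=j_*i_*\,s$ by construction, with no appeal to refined Gysin theory and no dimension shift; your proposed justification instead proves a different, incompatible identity.
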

 \begin{proof}

  The intersection product $X \cdot_Y V$ is given by $c(i^*TY) \frown s(X \cap V, X \times V)$.
  Then we have the following, via successive application of the projection formula:
  \begin{align*}
   j_* i_* \left( c(i^*TY) \frown s(X \cap V, X \times V) \right) & = j_* \left( c(TY) \frown i_* s(X \cap V, X \times V) \right) \\
   & = c(E) \frown j_* i_* s(X \cap V, X \times V) \\
   & = c(E) \frown \Delta^* \underline{s}(X\cap V, X \times V)
  \end{align*}
  where the last step follows from the fact that the pushforward from $X \cap V$ to $T_\Sigma \times T_\Sigma$ factors through $\Delta$.
\end{proof}

We note that the bundle $E$ in Theorem \ref{thm:intersectionProduct} above may not exist. Further, is not clear how to find the bundle $E$ even when its existence is known.
As such, the following variation of
Theorem \ref{thm:intersectionProduct} is more computationally useful; 
 we again use the notation of \eqref{fig:reduction-diag}. 
 
\begin{theorem}
Suppose that the fan $\Sigma$ has rays $\{\rho_1, \dots, \rho_m\}$ and let $D_{\rho_i}$ be the divsor associated to $\rho_i$ in $A^1(T_\Sigma)$. Assume $Y \subset T_\Sigma$ is a smooth complete intersection of codimension $r$ defined by $f_1,\dots, f_r$ with $[V(f_i)]=\alpha_i\in A^*(T_{\Sigma})$. Then
  \[ X \cdot_Y V = \left\{ \frac{\prod_{i=1}^m(1+D_{\rho_i})}{\prod_{i=1}^r (1+\alpha_i)} \frown
    \Delta^* \underline{s}(X \cap V, X \times V) \right\}_{exp.~dim.} \in A^*(T_\Sigma) .\]
  \label{thm:intersectionProductCI}
\end{theorem}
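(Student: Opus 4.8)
The plan is to obtain this as a direct specialization of Theorem~\ref{thm:intersectionProduct}, the only real work being to identify the total Chern class appearing there with the explicit toric expression
\[
  c \;:=\; \frac{\prod_{i=1}^m(1+D_{\rho_i})}{\prod_{i=1}^r(1+\alpha_i)} \;\in\; A^*(T_\Sigma),
\]
where the denominator is a unit because its degree-zero part is $1$ and every positive-degree element of the finite-dimensional graded ring $A^*(T_\Sigma)$ is nilpotent. Two standard inputs drive the identification. First, the total Chern class of the tangent bundle of the smooth complete toric variety $T_\Sigma$ is $c(T(T_\Sigma))=\prod_{\rho\in\Sigma(1)}(1+D_\rho)=\prod_{i=1}^m(1+D_{\rho_i})$; this follows from the generalized Euler sequence $0\to\Omega^1_{T_\Sigma}\to\bigoplus_\rho\mathcal O(-D_\rho)\to\mathrm{Pic}(T_\Sigma)\otimes_\ZZ\mathcal O_{T_\Sigma}\to 0$ and the Whitney formula (see \cite[\S8.1, \S13.1]{cox2011toric}). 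Second, because $Y=\mathbb{V}(f_1,\dots,f_r)$ is a smooth complete intersection with $[\mathbb{V}(f_i)]=\alpha_i$, its normal bundle is $N_Y T_\Sigma=\bigoplus_{i=1}^r\mathcal O_Y(\alpha_i)$, so $c(N_Y T_\Sigma)=j^*\prod_{i=1}^r(1+\alpha_i)$, with $j:Y\hookrightarrow T_\Sigma$ the inclusion.

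With these in hand the computation of $c(TY)$ is immediate. Since $Y$ and $T_\Sigma$ are both smooth, the normal bundle exact sequence $0\to TY\to j^*T(T_\Sigma)\to N_Y T_\Sigma\to 0$, together with the Whitney formula and the fact that $j^*$ is a ring homomorphism, yields
\[
  c(TY)=\frac{c\big(j^*T(T_\Sigma)\big)}{c(N_Y T_\Sigma)}=j^*\!\left(\frac{\prod_{i=1}^m(1+D_{\rho_i})}{\prod_{i=1}^r(1+\alpha_i)}\right)=j^*c.
\]
It then remains to feed $c(TY)=j^*c$ into the pushforward computation of Theorem~\ref{thm:intersectionProduct}, starting from the defining expression $X\cdot_Y V=\{c(i^*TY)\frown s(X\cap V,X\times V)\}_{exp.~dim.}$ and applying the projection formula along $j_*i_*$.

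The one genuine obstacle is that Theorem~\ref{thm:intersectionProduct} is stated under the hypothesis that $TY$ be \emph{equal} to a pullback $j^*E$ of an honest vector bundle on $T_\Sigma$, whereas for a general complete intersection no such global bundle need exist; we have only produced a \emph{class} $c\in A^*(T_\Sigma)$ with $c(TY)=j^*c$. The way around this is to observe that the proof of Theorem~\ref{thm:intersectionProduct} invoked the hypothesis $TY=j^*E$ at exactly one point, the projection-formula step $j_*\big(c(TY)\frown\beta\big)=c(E)\frown j_*\beta$, and that this identity is valid as soon as $c(TY)$ lies in the image of $j^*$ --- no underlying bundle is required, since the projection formula is a statement about classes. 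I would therefore either weaken the hypothesis of Theorem~\ref{thm:intersectionProduct} to ``$c(TY)\in\mathrm{im}(j^*)$'' or simply rerun its short projection-formula computation verbatim with $c(E)$ replaced by the class $c$ above; either route gives
\[
  j_*i_*(X\cdot_Y V)=\{c\frown\Delta^*\underline s(X\cap V,X\times V)\}_{exp.~dim.},
\]
which is the asserted formula. The expected-dimension truncation is preserved throughout, as it commutes with the functorial operations used.
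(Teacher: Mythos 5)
Your proof is correct and takes essentially the same route as the paper's: compute $c(TY)$ from the normal bundle sequence and the Whitney formula, identify $c(T(T_\Sigma))=\prod_{i=1}^m(1+D_{\rho_i})$, and push forward via the projection formula. Your explicit observation that the hypothesis $TY=j^*E$ of Theorem~\ref{thm:intersectionProduct} can be relaxed to $c(TY)\in\mathrm{im}(j^*)$ is precisely what the paper handles implicitly by rerunning the projection-formula computation with the class $c(j^*T(T_\Sigma))/c(j^*F)$, where $F=\bigoplus\mathcal{O}(\alpha_i)$.
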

\begin{proof}
  If $Y$ is a smooth complete intersection, then its tangent bundle is
  determined by the exact sequence
  \[
    0 \to TY \to T(T_\Sigma|_Y) \to N_Y T_\Sigma \to 0.
  \]
The Whitney sum formula gives a description of its Chern classes:
  \[ c(TY) = \frac{c(T(T_\Sigma|_Y))}{c(N_Y T_\Sigma)} = \frac{c(j^* T(T_\Sigma))}{c(j^* F)}\]
  where $F = \bigoplus \mathcal{O}(\alpha_i)$ is a sum of line bundles. By \cite[Proposition~13.1.2]{cox2011toric} we have that $c(T(T_\Sigma))=\prod_{i=1}^m(1+D_{\rho_i}) \in A^*(T_\Sigma)$.
  Combining this with Theorem \ref{thm:intersectionProduct} we have
  \begin{align*}
   j_* i_* \left( c(i^*TY) \frown s(X \cap V, X \times V) \right) & = j_* \left( c(TY) \frown i_* s(X \cap V, X \times V) \right) \\
   & = j_* \frac{c(j^* T(T_\Sigma))}{c(j^*F)} \frown i_* s(X \cap V, X \times V) \\
   & = \frac{c(T(T_\Sigma))}{c(\bigoplus \mathcal{O}(\alpha_i))} \frown j_* i_* s(X \cap V, X \times V) \\
   & = \frac{\prod_{i=1}^m(1+D_{\rho_i})}{\prod_{i=1}^r (1+\alpha_i)} \frown j_* i_* s(X \cap V, X \times V) \\
   & = \frac{\prod_{i=1}^m(1+D_{\rho_i})}{\prod_{i=1}^r (1+\alpha_i)} \frown \Delta^* \underline{s}(X \cap V, X \times V). \qedhere
  \end{align*}
\end{proof}

In the case where $T_\Sigma = \pp^n$, 
Theorem \ref{thm:intersectionProductCI} simplifies to the following. 

\begin{corollary}
  Assume $Y \subset \pp^n$ is a smooth complete intersection. Then
  \[ X \cdot_Y V = \left\{ \frac{(1+{h})^{n+1}}{\prod_{i=1}^r (1+d_i {h})} \frown
    \Delta^* \underline{s}(X \cap V, X \times V) \right\}_{exp.~dim.} \in A^*(\pp^n)\cong \ZZ[h]/\langle h^{n+1} \rangle,\]
  where $Y$ is defined by $r=\codim(Y)$ polynomials of degrees $d_1,\dots,d_r$.
  \label{thm:intersectionProductCIPPn}
\end{corollary}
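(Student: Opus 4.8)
The plan is to obtain this corollary as a direct specialization of Theorem \ref{thm:intersectionProductCI} to the toric variety $T_\Sigma = \pp^n$. All of the substance reduces to re-expressing the two products appearing in that theorem in terms of the single hyperplane class $h$ generating $A^*(\pp^n) \cong \ZZ[h]/\langle h^{n+1}\rangle$. Since the hypotheses of Theorem \ref{thm:intersectionProductCI} (that $Y$ is a smooth complete intersection of codimension $r$) are identical to those assumed here, no additional geometric input is required beyond the toric combinatorics of projective space.

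First I would recall the standard toric description of $\pp^n$: its fan $\Sigma$ has exactly $n+1$ rays, generated by $e_1,\dots,e_n$ together with $-(e_1+\cdots+e_n)$, so that $m = n+1$ in the notation of Theorem \ref{thm:intersectionProductCI}. The key observation is that each torus-invariant prime divisor $D_{\rho_i}$ is rationally equivalent to the hyperplane class, that is $D_{\rho_i} = h \in A^1(\pp^n)$ for every $i$; this is a standard fact about projective space as a toric variety (see \cite[\S12.5]{cox2011toric}). Substituting these into the numerator of the formula in Theorem \ref{thm:intersectionProductCI} gives $\prod_{i=1}^m(1+D_{\rho_i}) = (1+h)^{n+1}$, which of course also recovers $c(T\pp^n)$ via the Euler sequence.

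Next I would treat the denominator. Since $Y \subset \pp^n$ is cut out by $r = \codim(Y)$ polynomials of degrees $d_1,\dots,d_r$, the $i$-th defining hypersurface $\mathbb{V}(f_i)$ has class $\alpha_i = d_i\,h \in A^1(\pp^n)$. Hence $\prod_{i=1}^r(1+\alpha_i) = \prod_{i=1}^r(1+d_i\,h)$, which matches the stated denominator. Combining the two identifications with Theorem \ref{thm:intersectionProductCI} then yields the claimed formula verbatim.

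I do not expect any genuine obstacle here: the corollary is a pure substitution once the two standard facts—that $\pp^n$ has $n+1$ ray divisors each of class $h$, and that a degree-$d_i$ hypersurface has class $d_i\,h$—are recorded. The only point worth stating carefully is the first of these, since it is exactly what collapses the toric product $\prod_{i=1}^m(1+D_{\rho_i})$ to the clean binomial form $(1+h)^{n+1}$; everything else is automatic from the ambient Chow ring structure.
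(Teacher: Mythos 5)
Your proposal is correct and matches the paper's intent exactly: the corollary is stated as an immediate specialization of Theorem \ref{thm:intersectionProductCI} to $T_\Sigma=\pp^n$, where the $n+1$ ray divisors are each rationally equivalent to $h$ (so the numerator collapses to $(1+h)^{n+1}$) and each degree-$d_i$ hypersurface has class $d_i h$ (giving the denominator). The paper offers no further argument, so your substitution-based proof is essentially the same approach, just written out explicitly.
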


\section{Algebraic multiplicity}\label{subSection:AlgMult1}
In this section, we consider applications of the results from \S\ref{sec:MainResults} to the computation of Samuel's algebraic multiplicity of an ideal in a local ring. In particular, we prove a new explicit expression for these multiplicities which does not require working in local rings. We note that our results in this section can be seen as a generalization of classical techniques such as those presented in \cite[pg.~259]{harris1992algebraic}, which give geometric expressions for the algebraic multiplicity of a point in a projective variety. 

Let $R$ be the Cox ring of a smooth projective toric variety $T_\Sigma$. Let $\mathcal{I}_X$ be a prime ideal in $R$ and $\mathcal{I}_X\supset \mathcal{I}_Y$ be a primary ideal in $R$, defining an irreducible scheme $Y$ and a subvariety $X=\mathbb{V}(\mathcal{I}_X)\subset Y$. The local ring of $Y$ along $X$ is defined as the localization of $R/\mathcal{I}_Y$ at the prime ideal $\mathcal{I}_X$, that is $$
\mathcal{O}_{X,Y}=(R/\mathcal{I}_Y)_{\mathcal{I}_X}.
$$Let $\mathcal{M}$ denote the maximal ideal of $\mathcal{O}_{X,Y}$. For $t>>0$ and $d=\mathrm{codim}(X,Y)$ the \textit{Hilbert-Samuel polynomial} is$$
P_{HS}(t):=\ell(\mathcal{O}_{X,Y}/\mathcal{M}^t)={e_XY}\cdot \frac{ t^{d}}{d!} +\mathrm{ lower \; terms}.
$$ The coefficient {$e_XY$} of the leading term of the Hilbert-Samuel polynomial is known as the {\textit{algebraic multiplicity} of $Y$ along $X$}. The coefficient $e_XY$ is also the multiplicity of the ideal $\mathcal{I}_X\cdot \mathcal{O}_{X,Y}$ in the local ring $ \mathcal{O}_{X,Y}$, denoted $e(\mathcal{I}_X,\mathcal{O}_{X,Y})$. This definition is due to Samuel \cite{samuel1955methodes}. 
Samuel's multiplicity {$e_XY$} is also given by the {integer coefficient of $[X]$ in $s(X,Y)$}; see \cite[Ex.~4.3.4]{fulton2013intersection}. We take this later characterization of $e_XY$ as its definition for the remainder of this note. 
\begin{definition}
Let $Y$ be a pure-dimensional subscheme of a smooth projective toric variety $T_\Sigma$ and let $X$ be a subvariety of $Y$. The {algebraic multiplicity} of $Y$ along $X$, denoted $e_XY$, is the integer coefficient of $[X]$ in the class $s(X,Y) \in A^*(T_\Sigma)$.\label{def:eXY}
\end{definition}

Using Proposition \ref{cor:SegreDimX} we obtain the following expression for the algebraic multiplicity in $T_\Sigma$. 
We use the notation
of \S\ref{subsec:notationConvention}.
As in \S\ref{subsec:SegSubToric} we let $b_1,\dots, b_{m} \in A^1(T_{\Sigma})$ be a fixed nef basis for $A^1(T_{\Sigma})$. 
We express the rational equivalence class of a point in $T_\Sigma$ as the monomial $b^n=b_1^{n_1}\cdots b_{m}^{n_m}$.  
The monomials $b^e$ which divide $b^n$ in $A^*(T_\Sigma)$ form a basis for $A^{|e|}(T_\Sigma)$. 

\begin{theorem}
Let $Y \subset T_\Sigma$ be a pure-dimensional subscheme, and let $V \subset Y$ be a non-empty subvariety defined by $\alpha$-homogeneous polynomials. Write
\[
[V]=\sum_{|a|=\dim(V)} v_a b^{n-a}, \quad\text{and}\quad \alpha^{\dim(Y)-\dim(V)}[Y]=\sum_{|a|=\dim(V)}{y}_a b^{n-a}.
\]
Let $g_a(V,Y)$ be the projective degree as in \eqref{eq:ProjDegsExplicit_No_G}. 
Then the algebraic multiplicity of $Y$ along $V$ is given by 
\[
e_VY=\frac{y_a-g_{a}(V,Y)}{v_a} \in \ZZ_{>0}.
\]
for any $a$ such that $|a|=\dim(V)$ and $v_a\neq 0$.
\label{thm:eX_Y_Toric}
\label{thm:eX_Y_MultiProj}
\end{theorem}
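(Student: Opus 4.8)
The plan is to read off the algebraic multiplicity $e_VY$, defined as the coefficient of $[V]$ in $s(V,Y)$, from the $\dim(V)$-part of the Segre class, and to compute that part via the closed formula already supplied by Corollary~\ref{cor:SegreDimXToric} (or Proposition~\ref{cor:SegreDimX} in the product-of-projective-spaces case). Since $V$ is a subvariety of the pure-dimensional scheme $Y$ with $V\subset Y$, Corollary~\ref{cor:SegreDimXToric} gives directly
\[
\{s(V,Y)\}_{\dim(V)}=\Bigl\{\alpha^{\dim(Y)-\dim(V)}[Y]-\sum_{|a|=\dim(V)}g_a(V,Y)\,b^{n-a}\Bigr\}_{\dim(V)}.
\]
First I would expand both sides in the monomial basis $\{b^{n-a}:|a|=\dim(V)\}$ for the relevant graded piece of $A^*(T_\Sigma)$, using the given expansions $[V]=\sum_{|a|=\dim(V)}v_a\,b^{n-a}$ and $\alpha^{\dim(Y)-\dim(V)}[Y]=\sum_{|a|=\dim(V)}y_a\,b^{n-a}$.

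\textbf{Extracting the coefficient.} By Definition~\ref{def:eXY}, $e_VY$ is the integer $c$ for which $\{s(V,Y)\}_{\dim(V)}=c\,[V]$; the point is that the lowest-dimensional (here $\dim(V)$) part of the Segre class of a variety $V$ is always a multiple of the fundamental class $[V]$, so such a $c$ exists. Matching coefficients of $b^{n-a}$ on the two sides of the displayed formula yields, for every $a$ with $|a|=\dim(V)$,
\[
c\,v_a=y_a-g_a(V,Y).
\]
Hence for any index $a$ with $v_a\neq 0$ I can solve $c=(y_a-g_a(V,Y))/v_a$, which is exactly the claimed formula $e_VY=(y_a-g_a(V,Y))/v_a$. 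Consistency across different admissible $a$ is automatic, since all such ratios equal the single scalar $c$.

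\textbf{The main obstacle} is justifying the structural fact that $\{s(V,Y)\}_{\dim(V)}$ is genuinely a scalar multiple of $[V]$, i.e.\ that it is supported on the single fundamental class and not some other $\dim(V)$-cycle; this is what makes the coefficient extraction well-defined and guarantees $c\in\ZZ_{>0}$. This follows because $V$ is \emph{irreducible} (a subvariety), so the leading term of the Segre class $s(V,Y)=s(C_VY)$ of the normal cone is $[V]$ with multiplicity equal to Samuel's algebraic multiplicity $e(\mathcal{I}_V,\mathcal{O}_{V,Y})$, as recorded in the discussion preceding Definition~\ref{def:eXY} via \cite[Ex.~4.3.4]{fulton2013intersection}; here one also uses that $V$ is reduced so that its geometric multiplicity in $[V]$ is $1$. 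I would remark that the positivity $e_VY\in\ZZ_{>0}$ is inherited from the Hilbert--Samuel characterization, its leading coefficient being a positive integer. The remaining steps—the coefficient bookkeeping and the division—are routine once this identification is in place.
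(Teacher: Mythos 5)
Your proposal is correct and follows essentially the same route as the paper: both read off $e_VY$ as the coefficient of $[V]$ in $\{s(V,Y)\}_{\dim(V)}$ (using irreducibility of $V$ to know this part is an integer multiple of $[V]$, which you justify via \cite[Ex.~4.3.4]{fulton2013intersection} and the paper justifies via Definition~\ref{def:Segre}), then apply Corollary~\ref{cor:SegreDimXToric} to express that part as $\alpha^{\dim(Y)-\dim(V)}[Y]-\sum_{|a|=\dim(V)}g_a(V,Y)\,b^{n-a}$ and match coefficients. The only differences are cosmetic: your coefficient bookkeeping is slightly more explicit, and your remark on positivity via the Hilbert--Samuel leading coefficient is a detail the paper leaves implicit.
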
\begin{proof}
 From Definition \ref{def:eXY} we have that $e_VY$ is the integer coefficient of $[V]$ in $s(V,Y)$. From the definition of the Segre class (Definition \ref{def:Segre}) we see that the $\dim(V)$ part of $s(V,Y)\in A^*(T_\Sigma)$ is an integer multiple of $[V]$ in the same Chow ring. Since $V$ is irreducible, the polynomial in $A^*(T_\Sigma)$ representing $\{s(V,Y)\}_{\dim(V)}$ is an integer multiple of the polynomial representing $[V]$. By Proposition \ref{cor:SegreDimX}, we have that
\[ 
\{s(V,Y)\}_{\dim(V)}=\alpha^{\dim(Y)-\dim(V)}[Y]-\sum_{|a|=\dim(V)} g_a(V,Y) b^{n-a},
\] 
so $e_VY=\frac{y_a-g_{a}(V,Y)}{v_a}$ for any $a$ for which $v_a$ is non-zero (at least one such $a$ must exist since we assume $V$ is non-empty). 
\end{proof}

In the case where $T_\Sigma=\PP^{n_1}\times \cdots \times \PP^{n_m}$ the Chow ring is represented as in \eqref{eq:ChowRingMultiProj}, that is $A^*(\PP^{n_1}\times \cdots \times \PP^{n_m})\cong \ZZ[h_1,\dots, h_m]/\langle h_1^{n_1+1}, \dots,h_m^{n_m+1}\rangle $, hence in Theorem \ref{thm:eX_Y_Toric} we simply replace $b^n$ with $h^n=h_1^{n_1}\cdots h_m^{n_m}$. In the case where $T_\Sigma$ is a single projective space this result has a particularily appealing form. 
\begin{theorem}
Let $Y \subset \pp^n$ be a pure-dimensional subscheme and $X \subset Y$ a non-empty subvariety. Let $d$ be the maximum degree of the generators of $\mathcal{I}_X$ and $\mathcal{I}_Y$.
Then the algebraic multiplicity of $Y$ along $X$ is
\[
e_XY=\frac{\deg(Y)d^{\dim(Y)-\dim(X)}-g_{\dim(X)}(X,Y)}{\deg(X)} \in \ZZ_{>0}.
\] \label{theorem:eXY_PPn}
\end{theorem}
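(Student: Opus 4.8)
The plan is to derive Theorem \ref{theorem:eXY_PPn} as the special case $T_\Sigma = \pp^n$ of the already-proven Theorem \ref{thm:eX_Y_Toric}. In a single projective space the Chow ring is $A^*(\pp^n) \cong \ZZ[h]/\langle h^{n+1}\rangle$, so there is only one nef generator $b_1 = h$, the point class is $b^n = h^n$, and the multi-index $a$ collapses to a single integer $|a| = \dim(X)$. Thus the sum in Theorem \ref{thm:eX_Y_Toric} over all $a$ with $|a| = \dim(V)$ has exactly one term, indexed by $a = \dim(X)$, and I would simply specialize the formula $e_VY = (y_a - g_a(V,Y))/v_a$ to this setting with $V = X$.

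First I would identify the three quantities $v_a$, $y_a$, and $g_a$ appearing in Theorem \ref{thm:eX_Y_Toric}. The coefficient $v_a$ is the coefficient of $[X]$ written in the $h^{n-\dim(X)}$ basis element; since $[X] = \deg(X)\, h^{n - \dim(X)}$ in $A^*(\pp^n)$, we get $v_a = \deg(X)$. Next, the class $\alpha^{\dim(Y)-\dim(X)}[Y]$ must be computed: here $\alpha = d\, h$, where $d$ is the common degree chosen for the $\alpha$-homogeneous generators of $\mathcal{I}_X$, and $[Y] = \deg(Y)\, h^{n-\dim(Y)}$. Multiplying gives $\alpha^{\dim(Y)-\dim(X)}[Y] = d^{\dim(Y)-\dim(X)} \deg(Y)\, h^{n-\dim(X)}$, so that $y_a = \deg(Y)\, d^{\dim(Y)-\dim(X)}$. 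Finally $g_a(X,Y)$ is the single projective degree $g_{\dim(X)}(X,Y)$ in this one-variable grading. Substituting these into the formula of Theorem \ref{thm:eX_Y_Toric} yields exactly the stated expression.

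The one point that needs care, rather than being a deep obstacle, is the role of $d$ as the maximum degree of the generators of \emph{both} $\mathcal{I}_X$ and $\mathcal{I}_Y$, whereas Theorem \ref{thm:eX_Y_Toric} only fixes $\alpha$ as the common multidegree of the $\alpha$-homogeneous generators of $\mathcal{I}_X=\langle f_0,\dots,f_r\rangle$. I would justify the choice $\alpha = d\,h$ by invoking the constructions of \S\ref{subsubsec:HomGens}: we may take $\alpha$-homogeneous generators of $\mathcal{I}_X$ all of degree $d$, where $d$ is at least the maximal generator degree occurring in $\mathcal{I}_X$ and $\mathcal{I}_Y$ (raising the degree of the generators if necessary, which leaves the scheme $X$ unchanged). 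The proof just needs to observe that the quantity $y_a - g_a$ and the resulting ratio are independent of which valid common degree is chosen, so the stated $d$ is admissible and the formula is well-defined. Since Theorem \ref{thm:eX_Y_Toric} is assumed proven, the entire argument is a clean specialization with no genuine difficulty; the only real work is bookkeeping to match notation, and verifying that the single-variable grading makes the multi-index sum collapse to one term.
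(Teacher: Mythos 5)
Your proposal is correct and takes essentially the same approach as the paper: the paper's proof of Theorem \ref{theorem:eXY_PPn} likewise just specializes the argument of Theorem \ref{thm:eX_Y_Toric} to $A^*(\pp^n)\cong\ZZ[h]/\langle h^{n+1}\rangle$, where $[X]=\deg(X)\,h^{n-\dim(X)}$ and $\alpha^{\dim(Y)-\dim(X)}[Y]=\deg(Y)\,d^{\dim(Y)-\dim(X)}\,h^{n-\dim(X)}$, so the multi-index sum collapses to the single projective degree $g_{\dim(X)}(X,Y)$. Your extra bookkeeping about choosing degree-$d$ generators of $\mathcal{I}_X$ via \S\ref{subsubsec:HomGens} is consistent with the paper's conventions and does not constitute a different route.
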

\begin{proof}
In this case the Chow ring is $A^*(\pp^n)\cong \ZZ[h]/\langle h^{n+1} \rangle$. The proof goes through as in the previous result (with $b^n$ replaced by $h^n)$, but in this case the expression for the dimension-$X$ part of $s(X,Y)$ simplifies to
\[ 
\{s(X,Y)\}_{\dim(X)}=\left(\deg(Y)d^{\dim(Y)-\dim(X)}-g_{\dim(X)}(X,Y)\right)h^{n-dim(X)}. 
\]Also note that $[X]=\deg(X)h^{n-\dim(X)}$. The conclusion follows. 
\end{proof}

\section{Gr\"obner-free containment testing for ideals and varieties}\label{sec:GBFreeContainment}
In this section we develop a new algorithm to test if $V\subset W$ for $V,W$ (possibly singular) subvarieties of a smooth projective toric variety $T_\Sigma$. The key step in this algorithm is counting the number of solutions to a zero-dimensional system of polynomial equations, for which there are many Gr\"obner-free symbolic and numeric methods (e.g.~geometric resolutions \cite{lecerf2003computing,Kronecker}, homotopy continuation \cite{hauenstein2018solving,verschelde1999algorithm,BHSW06}, etc.). As in previous sections we freely use the notations and conventions of \S\ref{subsec:notationConvention}.

\subsection{Containment of a subvariety in the singular locus of a variety}\label{subsec:subVarInSingLocus}
In this subsection we employ a result of Samuel \cite{samuel1955methodes} (see also Fulton \cite[Ex.~12.4.5(b)]{fulton2013intersection}) which relates containment of varieties to the algebraic multiplicity studied in \S\ref{subSection:AlgMult1}. This yields an algorithm to test containment of a subvariety in the singular locus of a variety \textit{without computing the singular locus}. Samuel proves the following:
\begin{proposition}[\cite{samuel1955methodes} II \S6.2b]
Let $Z$ be a subvariety of a smooth projective toric variety $T_{\Sigma}$ and suppose that $X$ is a subvariety of $Z$. Then $e_XZ=1$ if and only if $X$ is \textit{not contained in the singular locus of $Z$}. \label{prop:SamualMultSingLocusCont}
\end{proposition}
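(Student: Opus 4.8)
The plan is to reduce this intersection-theoretic statement, which concerns the multiplicity $e_XZ$, to a purely local question about the local ring $\mathcal{O}_{X,Z}$, and then to invoke the classical multiplicity-one criterion of commutative algebra. The entire content is the dictionary between the two pictures; once that is in place the geometry is immediate, because $X$ is irreducible.

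First I would recall the local description of $e_XZ$ recorded just before Definition \ref{def:eXY}: by \cite[Ex.~4.3.4]{fulton2013intersection}, the coefficient of $[X]$ in $s(X,Z)$ equals Samuel's multiplicity $e(\mathcal{M},\mathcal{O}_{X,Z})$ of the maximal ideal $\mathcal{M}=\mathcal{I}_X\cdot\mathcal{O}_{X,Z}$ in the local ring $\mathcal{O}_{X,Z}=(R/\mathcal{I}_Z)_{\mathcal{I}_X}$. Geometrically, $\mathcal{O}_{X,Z}$ is the local ring of $Z$ at the generic point $\xi$ of $X$. Since ${\rm Sing}(Z)$ is closed and $X$ is irreducible, $X\subset{\rm Sing}(Z)$ holds if and only if $\xi\in{\rm Sing}(Z)$, that is, if and only if $\mathcal{O}_{X,Z}$ fails to be a regular local ring. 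The proposition is therefore equivalent to the assertion
\[
  e(\mathcal{M},\mathcal{O}_{X,Z})=1 \iff \mathcal{O}_{X,Z}\ \text{is regular.}
\]

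The remaining step is precisely this multiplicity-one criterion. The implication ``regular $\Rightarrow$ multiplicity $1$'' is elementary, since the associated graded ring of a regular local ring is a polynomial ring, whose Hilbert polynomial has leading coefficient $1$. The converse, that multiplicity $1$ forces regularity, is the nontrivial direction, due to Nagata and Rees, and it is where the hypotheses matter: it requires $\mathcal{O}_{X,Z}$ to be formally equidimensional (quasi-unmixed). Here this is automatic, since $Z$ is a variety, so $\mathcal{I}_Z$ is prime and $\mathcal{O}_{X,Z}$ is a localization of the finitely generated domain $R/\mathcal{I}_Z$ over a field of characteristic zero; such a ring is excellent and its completion is reduced and equidimensional. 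I expect this verification, namely confirming that the equidimensionality hypothesis of the cited theorem is met so that the hard direction applies, to be the only genuine obstacle, the geometric reduction of the previous paragraph being formal. Once it is checked, the displayed equivalence follows and, combined with the characterization of ${\rm Sing}(Z)$, proves the statement; this is exactly the content of \cite[II~\S6.2b]{samuel1955methodes} and \cite[Ex.~12.4.5(b)]{fulton2013intersection}.
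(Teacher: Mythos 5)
The paper gives no proof of this proposition at all: it is quoted directly from Samuel \cite{samuel1955methodes} (with \cite[Ex.~12.4.5(b)]{fulton2013intersection} as an alternate reference), so there is no internal argument to compare against. What you have written is a correct reconstruction of the classical proof behind that citation. Your dictionary step is exactly the equivalence the paper itself records before Definition \ref{def:eXY}: by \cite[Ex.~4.3.4]{fulton2013intersection}, the coefficient of $[X]$ in $s(X,Z)$ equals the Samuel multiplicity $e(\mathcal{M},\mathcal{O}_{X,Z})$ of the maximal ideal of the local ring at the generic point of $X$. The geometric reduction is also sound: since $\mathrm{Sing}(Z)$ is closed and $X$ is irreducible with generic point $\xi$, one has $X\subset\mathrm{Sing}(Z)$ if and only if $\xi\in\mathrm{Sing}(Z)$, i.e.\ if and only if $\mathcal{O}_{X,Z}=\mathcal{O}_{Z,\xi}$ fails to be regular (over a field of characteristic zero the regular and smooth loci agree, so this matches any reasonable definition of $\mathrm{Sing}$). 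The remaining content is then the Nagata--Rees multiplicity-one criterion, and you correctly isolate the hard direction and the fact that it needs a hypothesis on the completion, which you verify using excellence of localizations of affine domains.

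One correction of terminology, which does not break the argument but should be fixed: the hypothesis required for Nagata's theorem is \emph{unmixedness} --- every \emph{associated} prime of the completion has maximal dimension --- not merely formal equidimensionality (quasi-unmixedness), which constrains only the \emph{minimal} primes. The distinction matters: $k[[x,y]]/\langle x^2,xy \rangle$ is formally equidimensional of dimension one and has multiplicity $1$ (its Hilbert--Samuel function is $n+1$), yet it is not regular; the failure is precisely the embedded prime $\langle x,y\rangle$. Fortunately, what you actually verify --- that the completion of $\mathcal{O}_{X,Z}$ is reduced and equidimensional, since $\mathcal{O}_{X,Z}$ is a localization of an affine domain over a field of characteristic zero and hence excellent --- is exactly unmixedness (reducedness kills embedded primes, equidimensionality handles the minimal ones). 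So your proof is complete as written once the hypothesis is named correctly.
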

Combining Proposition \ref{prop:SamualMultSingLocusCont} with the result of Theorem \ref{theorem:eXY_PPn} gives a particularly simple and explicit numerical test of containment in the singular locus when considering a pair of projective varieties. No information about the ideal defining the singular locus is used. 
\begin{corollary}
Let $Z$ be a subvariety of $\PP^n$ and suppose that $X$ is a subvariety of $Z$. Let $d$ be the maximum degree of the generators of $\mathcal{I}_X$ and $\mathcal{I}_Z$. We have that $X$ is \textit{not contained in the singular locus of $Z$} if and only if 
\[
\frac{\deg(Z)d^{\dim(Z)-\dim(X)}-g_{\dim(X)}(X,Z)}{\deg(X)}=1.\label{cor:contSingLocusPn}
\]
\end{corollary}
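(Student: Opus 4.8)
The plan is to obtain this statement as a direct specialization of the two results immediately preceding it. The essential content is already contained in Proposition \ref{prop:SamualMultSingLocusCont} and Theorem \ref{theorem:eXY_PPn}, so the only work is to check that their hypotheses are met in the present setting and then to chain their conclusions together.

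First I would invoke Proposition \ref{prop:SamualMultSingLocusCont}, taking the ambient smooth projective toric variety to be $T_\Sigma = \PP^n$. This converts the geometric containment question into a purely numerical one: $X$ is \emph{not} contained in $\mathrm{Sing}(Z)$ if and only if the algebraic multiplicity $e_XZ$ equals $1$. At this point I should confirm the hypotheses of Samuel's result: $Z$ is a subvariety, hence reduced and irreducible (and in particular pure-dimensional), and $X$ is a subvariety of $Z$, so both are non-empty, exactly as required.

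Next I would substitute the explicit expression for $e_XZ$ furnished by Theorem \ref{theorem:eXY_PPn}. That theorem applies since $Z$ is pure-dimensional and $X$ is a non-empty subvariety, and with $d$ the maximum degree among the generators of $\mathcal{I}_X$ and $\mathcal{I}_Z$ it gives
\[
e_XZ = \frac{\deg(Z)\,d^{\dim(Z)-\dim(X)} - g_{\dim(X)}(X,Z)}{\deg(X)}.
\]
Combining this identity with the equivalence from the previous step yields precisely the stated criterion, since the left-hand side of the displayed equation in the corollary is literally $e_XZ$.

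There is no substantive obstacle here; the proof is a two-step composition of already established results. The closest thing to a subtlety is bookkeeping: one must verify that the pure-dimensionality and non-emptiness hypotheses of Theorem \ref{theorem:eXY_PPn} follow automatically from $Z$ being a variety, and that the quantity $d$ appearing in the corollary is the same maximum-degree quantity used in the multiplicity formula. The genuine computational novelty --- that $g_{\dim(X)}(X,Z)$ can be computed without Gr\"obner bases via Theorem \ref{MainTheorem1_Multi_proj} --- resides entirely in the earlier results and plays no role in establishing the equivalence itself.
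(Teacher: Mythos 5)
Your proposal is correct and matches the paper's own proof, which likewise obtains the corollary immediately by combining Proposition \ref{prop:SamualMultSingLocusCont} with the expression for $e_XZ$ in Theorem \ref{theorem:eXY_PPn}. Your additional hypothesis-checking (pure-dimensionality and non-emptiness following from $Z$ being a variety) is sound bookkeeping that the paper leaves implicit.
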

\begin{proof}
This follows immediately from Proposition \ref{prop:SamualMultSingLocusCont} using the expression for $e_XZ$ given in Theorem \ref{theorem:eXY_PPn}.
\end{proof}
Analogous statements can be written by combining Proposition \ref{prop:SamualMultSingLocusCont} and Theorem \ref{thm:eX_Y_Toric} to obtain a numerical criterion for the containment of a variety $X$ in the singular locus of a subvariety $Y$ of some smooth projective toric variety $T_\Sigma$. In Proposition \ref{propn:SingLocusSubschemeZ} below we also remove the requirement that $X$ is irreducible. As in \S\ref{subsec:SegSubToric} let $b_1,\dots, b_{m} \in A^1(T_{\Sigma})$ be a fixed nef basis for $A^1(T_{\Sigma})$.

\begin{proposition}
Let $Z \subset T_\Sigma$ be an irreducible subscheme with geometric multiplicity $\mathfrak{m}_Z$. Let $X$ be a reduced subscheme of $Z$. Choose an $\alpha$-homogeneous system of generators for $X$ and write \[
[X]=\sum_{|a|=\dim(X)}q_ab^{n-a}, \quad \text{and} \quad \alpha^{\dim(Z)-\dim(X)}[Z]=\sum_{|a|=\dim(X)}z_ab^{n-a}.
\]
Then an irreducible component $V$ of $X$ having $\dim(V)=dim(X)$ is contained in the singular locus of $Z$ if and only if \[
{z_a-g_{a}(X,Z)}>\mathfrak{m}_Z\cdot {q_a} 
\]for some $a$ such that $|a|=\dim(X)$ and $q_a\neq 0$.\label{propn:SingLocusSubschemeZ}

\end{proposition}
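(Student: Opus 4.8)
The plan is to reduce the statement to Samuel's criterion (Proposition \ref{prop:SamualMultSingLocusCont}) by extracting, from the dimension-$X$ part of $s(X,Z)$, the Samuel multiplicity of $Z$ along each top-dimensional component of $X$. First I would write that dimension-$X$ part in two ways. On one hand, Corollary \ref{cor:SegreDimXToric} gives
\[
\{s(X,Z)\}_{\dim(X)}=\sum_{|a|=\dim(X)}\big(z_a-g_a(X,Z)\big)\,b^{n-a}.
\]
On the other hand, since $X$ is reduced and $X\subset Z_{\mathrm{red}}$, Fulton's \cite[Lemma~4.2]{fulton2013intersection} gives $s(X,Z)=\mathfrak{m}_Z\, s(X,Z_{\mathrm{red}})$, and I would argue that the top-dimensional part of $s(X,Z_{\mathrm{red}})$ splits over the top-dimensional irreducible components $V_1,\dots,V_s$ of $X$ as $\sum_i e_{V_i}Z_{\mathrm{red}}\,[V_i]$. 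This is the local nature of the Segre class: near the generic point of $V_i$ the reduced scheme $X$ coincides with $V_i$, so the coefficient of $[V_i]$ is the multiplicity $e_{V_i}Z_{\mathrm{red}}$ of Definition \ref{def:eXY}, and only top-dimensional components contribute in this degree.

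Combining these, and using that $[X]=\sum_i[V_i]=\sum_a q_a b^{n-a}$ because $X$ is reduced, I would subtract $\mathfrak{m}_Z[X]$ from both descriptions to obtain the key identity
\[
\sum_{|a|=\dim(X)}\big(z_a-g_a(X,Z)-\mathfrak{m}_Z q_a\big)\,b^{n-a}
=\mathfrak{m}_Z\sum_{i=1}^{s}\big(e_{V_i}Z_{\mathrm{red}}-1\big)[V_i].
\]
By Proposition \ref{prop:SamualMultSingLocusCont} applied to the pair $V_i\subset Z_{\mathrm{red}}$, each coefficient satisfies $e_{V_i}Z_{\mathrm{red}}-1\ge 0$, with $e_{V_i}Z_{\mathrm{red}}-1>0$ exactly when $V_i\subset\mathrm{Sing}(Z_{\mathrm{red}})$. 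Hence the right-hand side is a non-negative combination of the effective classes $[V_i]$; it vanishes precisely when no top-dimensional component of $X$ lies in $\mathrm{Sing}(Z)$, and is a nonzero effective class precisely when some top-dimensional component does.

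It then remains to translate ``the right-hand side is nonzero'' into the coefficient inequality ``$z_a-g_a(X,Z)>\mathfrak{m}_Z q_a$ for some $a$ with $q_a\ne 0$''. For this I would use that the $b_i$ form a nef, base-point-free basis (Remark \ref{remark:toricBasepointFree}): pairing the effective class $\gamma:=\mathfrak{m}_Z\sum_i(e_{V_i}Z_{\mathrm{red}}-1)[V_i]$ against products of the $b_i$ yields non-negative intersection numbers, and $\gamma\ne 0$ forces such a pairing to be strictly positive for some monomial $b^a$. Since the components $V_i$ entering $\gamma$ are components of $X$, the same $b^a$ meets $X$, which forces $q_a\ne 0$; in the product-of-projective-spaces case this is transparent because $\int b^{n-a'}\cdot b^a=\delta_{a,a'}$, so the basis coefficient of $\gamma$ at $b^{n-a}$ is exactly $\int\gamma\cdot b^a\ge 0$ and the support of $\gamma$ is contained in that of $[X]$. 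I expect this last bookkeeping step to be the main obstacle: in a general toric Chow ring the monomial basis $\{b^{n-a}\}$ is not orthonormal under the intersection pairing and the point-class representative $b^n$ is not unique, so some care is needed to see that an index $a$ detecting $\gamma\ne 0$ can be chosen with $q_a\ne 0$. The other delicate point is justifying the component-wise splitting of the top-dimensional Segre class used above, which rests on the local description of Segre multiplicities at the generic points of the $V_i$.
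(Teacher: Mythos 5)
Your proposal is correct and takes essentially the same approach as the paper's proof: both express $\{s(X,Z)\}_{\dim(X)}$ via Corollary \ref{cor:SegreDimXToric}, reduce to $Z_{\rm red}$ by Fulton's Lemma~4.2, split the top-dimensional part of the Segre class over the irreducible components of $X$ (the paper cites \cite[Example~4.3.4]{fulton2013intersection} for exactly the splitting you argue locally), and then invoke Samuel's criterion (Proposition \ref{prop:SamualMultSingLocusCont}). The coefficient-bookkeeping subtlety you flag at the end is not addressed in the paper either --- its proof simply concludes with ``Applying Corollary \ref{cor:SegreDimXToric} gives the conclusion'' --- so your treatment is, if anything, more explicit about that step.
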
\begin{proof}
 
By \cite[Lemma 4.2]{fulton2013intersection}, we have that $\{s(X,Z)\}_{\dim(X)}=\mathfrak{m}_Z\cdot \{s(X,Z_{\rm red})\}_{\dim(X)}$. Let $X_1,\dots, X_\ell$ be the irreducible components of $X$. From \cite[Example~4.3.4]{fulton2013intersection} we have that 
\begin{align*}
\{s(X,Z_{\rm red})\}_{\dim(X)}&= \sum_{i=1}^l e_{X_i}(Z_{\rm red})[X_i].
\end{align*}
By Proposition \ref{prop:SamualMultSingLocusCont} we have that $e_{X_i}Z_{\rm red}=1$ if and only if $X_i$ is not contained in the singular locus of $Z_{\rm red}$. It follows that 
\[
\{s(X,Z)\}_{\dim(X)} = \mathfrak{m}_Z\cdot \{s(X,Z_{\rm red})\}_{\dim(X)}=\mathfrak{m}_Z\cdot([X_1]+\cdots+[X_\ell])=\mathfrak{m}_Z\cdot[X]
\]
if and only if no $X_i$ is contained in the singular locus of $Z$. Applying Corollary \ref{cor:SegreDimXToric} gives the conclusion. 
\end{proof}
\subsection{Containment of any two varieties}\label{subsec:ContainmentTest}
In this subsection we adapt the method presented in \S\ref{subsec:subVarInSingLocus} to test containment of varieties in general. Unlike in \S\ref{subsec:subVarInSingLocus}, where we were able to study simpler objects than standard methods, this will lead us to construct ideals which may be more complicated to perform the test. However, the methods presented in this section do not require the computation of a Gr\"obner basis and in particular will give a means to test containment of possibly singular varieties using numerical algebraic geometry. 

In the lemma below $T_\Sigma$ again denotes a smooth projective toric variety with Cox ring $R$.
\begin{lemma}
Let $X \subset T_\Sigma$ be a variety defined by an $\alpha$-homogeneous ideal 
$\mathcal{I}_X= \langle f_0,\dots,f_r \rangle$.
Then
\[\{s(X,\Theta)\}_{\dim(X)}=[X]\]
where $\Theta$ is a hypersurface defined by a general $k$-linear combination $\sum \lambda_{j} f_j$.
\label{lemma:dimXSegreisXforIrVarInRandomPlane}
\end{lemma}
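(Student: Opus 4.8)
The plan is to compute the dimension-$\dim(X)$ part of $s(X,\Theta)$ with Corollary \ref{cor:SegreDimXToric} and to compare it with the corresponding part of $s(X,T_\Sigma)$, which must equal $[X]$ because $T_\Sigma$ is smooth. First note that $X\subset\Theta$: each $f_j$ lies in $\mathcal{I}_X$, so the general combination $\sum_j\lambda_j f_j$ does too, and hence $\Theta\supset X$. Write $d=\dim(X)$ and $n=\dim(T_\Sigma)$, so that $\Theta$ is a hypersurface of dimension $N=n-1$ with $[\Theta]=\alpha\in A^1(T_\Sigma)$. Since $T_\Sigma$ is smooth, $\mathrm{Sing}(T_\Sigma)=\emptyset$, so $X\not\subset\mathrm{Sing}(T_\Sigma)$ and Proposition \ref{prop:SamualMultSingLocusCont} gives $e_XT_\Sigma=1$; by Definition \ref{def:eXY} this says precisely $\{s(X,T_\Sigma)\}_{d}=[X]$. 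It therefore suffices to prove that $\{s(X,\Theta)\}_{d}=\{s(X,T_\Sigma)\}_{d}$.

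The key step — and the one I expect to be the main obstacle — is the identity
\[
  g_a(X,\Theta)=g_a(X,T_\Sigma)\qquad\text{for all }a\text{ with }|a|=d .
\]
To see it, unwind Proposition \ref{propn:projective_degreeToric} in each case. For $Z=\Theta$ we have $g_a(X,\Theta)=\deg\big((\Theta\cap L^a\cap W)-X\big)$ with $W=\mathbb{V}(P_1,\dots,P_{N-|a|})$ and the $P_i$ general members of the linear system $\langle f_0,\dots,f_r\rangle$ as in \eqref{eq:Pjdef}; writing $\Theta=\mathbb{V}(P_0)$ for a further general member, $\Theta\cap W=\mathbb{V}(P_0,\dots,P_{N-d})$ is the intersection of $N-d+1=n-d$ general members of the system. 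For $Z=T_\Sigma$ we have $g_a(X,T_\Sigma)=\deg\big((L^a\cap\mathbb{V}(P_1,\dots,P_{n-d}))-X\big)$, again $L^a$ meeting $n-d$ general members minus $X$. Thus both projective degrees are computed from the same datum — the residual, off $X$, of $L^a$ meeting $n-d$ general members of $\langle f_0,\dots,f_r\rangle$ — so they agree; the role of $\Theta$ is simply to supply one of these general hypersurfaces. The subtle point is genericity: by Proposition \ref{propn:projective_degreeToric} (via Kleiman transversality) the degree is independent of the general choices, which is what makes the two counts literally equal.

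Finally, applying Corollary \ref{cor:SegreDimXToric} to $(X,\Theta)$ and to $(X,T_\Sigma)$, and using $\alpha^{N-d}[\Theta]=\alpha^{n-d}=\alpha^{n-d}[T_\Sigma]$ together with the identity above, gives
\[
  \{s(X,\Theta)\}_{d}
  =\alpha^{n-d}-\sum_{|a|=d}g_a(X,\Theta)\,b^{n-a}
  =\alpha^{n-d}-\sum_{|a|=d}g_a(X,T_\Sigma)\,b^{n-a}
  =\{s(X,T_\Sigma)\}_{d}=[X].
\]
One caveat remains: Corollary \ref{cor:SegreDimXToric} requires that no component of $\Theta$ be contained in $X$, which is automatic when $d<N$, since the components of $\Theta$ then have dimension $N>d$. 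In the remaining case $d=N$, where $X$ is itself a hypersurface and $\Theta$ contains $X$ as a reduced component, I would instead argue directly that a general member of the system is smooth at a general point of $X$: at a smooth point $p\in X$ the differentials $df_j(p)$ span the nonzero conormal space, so $\sum_j\lambda_j\,df_j(p)\neq 0$ for general $\lambda$, whence $\mathcal{O}_{X,\Theta}$ is regular, $e_X\Theta=1$, and $\{s(X,\Theta)\}_{d}=[X]$ once more.
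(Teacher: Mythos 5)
Your proof is correct and takes essentially the same route as the paper's: the key identity $g_a(X,\Theta)=g_a(X,T_\Sigma)$, obtained by viewing $\Theta$ as simply one more general member of the linear system $\langle f_0,\dots,f_r\rangle$, followed by Corollary \ref{cor:SegreDimXToric} to equate $\{s(X,\Theta)\}_{\dim(X)}$ with $\{s(X,T_\Sigma)\}_{\dim(X)}$ and Proposition \ref{prop:SamualMultSingLocusCont} to identify the latter with $[X]$. Your separate treatment of the case $\dim(X)=\dim(\Theta)$, where $X$ would be a component of $\Theta$ and the hypothesis of Corollary \ref{cor:SegreDimXToric} fails, is a point of care that the paper's proof passes over silently.
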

\begin{proof}
Let $W_c$ be the scheme defined by general $k$-linear combinations $P_1,\dots, P_c$ as in \eqref{eq:Pjdef}.
Then using the equality in Proposition \ref{propn:projective_degreeToric}, and writing $P_{\dim(\Theta)-|a|+1}=\sum \lambda_{j} f_j$, we have
\begin{align*}
g_a(X,\Theta) &= \deg({\Theta \cap L^a \cap W_{\dim(\Theta)-|a|} - X})\\
&= \deg({L^a \cap W_{\dim(\Theta)-|a|+1} - X})\\
&= \deg({L^a \cap W_{\dim(T_\Sigma)-|a|} - X})\\
&= g_a(X,T_\Sigma).
\end{align*}That is, the projective degrees of $X$ in $\Theta$ are the same as those of $X$ in the ambient space $T_{\Sigma}$. 
By Corollary \ref{cor:SegreDimXToric}, this implies $\{s(X,\Theta)\}_{\dim(X)} = \{s(X,T_\Sigma)\}_{\dim(X)}$, but since $X$ is a variety, $\{s(X,T_\Sigma)\}_{\dim(X)} = [X]$ by Proposition \ref{prop:SamualMultSingLocusCont}.
\end{proof}

\begin{remark}
The result above does not hold for $X$ an arbitrary scheme.  For example, let $X$ be the point-scheme defined by $\langle x^3,xy,y^3 \rangle$ in $\pp^2$.  Then $[X]=5h^2$, but $s(X,\Theta)=6h^2$.
\end{remark}

\begin{theorem}
Let $X=\mathbb{V}(f_0,\dots, f_r)$ be a subvariety and $Y=\mathbb{V}(\omega_1,\dots,\omega_s)$ a reduced subscheme of
$T_\Sigma$ where the given defining polynomials have the same multidegree, and let $\dim(X)\leq \dim(Y)$. 
Let $\sum \lambda_{i} f_i$ and $\sum \gamma_{j} \omega_j$ be general $k$-linear combinations defining 
hypersurfaces $\Theta$ and $\Omega$, respectively.
If $Z:= \Theta \cup \Omega$, then $X\subset Y$ if and only if $e_XZ> 1$.\label{thm:containmentGeneralIrrVar}
\end{theorem}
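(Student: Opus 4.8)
The plan is to compute the algebraic multiplicity $e_X Z$ directly in the local ring of $Z$ along $X$, rather than through the singular-locus criterion of Proposition \ref{prop:SamualMultSingLocusCont}, which does not apply here since $Z = \Theta \cup \Omega$ is reducible. Write $g = \sum_i \lambda_i f_i$ and $w = \sum_j \gamma_j \omega_j$, so that $\Theta = \mathbb{V}(g)$, $\Omega = \mathbb{V}(w)$, and $Z = \mathbb{V}(gw)$ is pure of codimension one (a union of two hypersurfaces). Since each $f_i \in \mathcal{I}_X$ we have $g \in \mathcal{I}_X$, hence $X \subset \Theta \subset Z$ and $e_X Z$ is defined (Definition \ref{def:eXY}). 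By the characterization recalled at the start of \S\ref{subSection:AlgMult1}, $e_X Z$ is the Samuel multiplicity of $\mathcal{O}_{X,Z} = R_{\mathcal{I}_X}/(gw)$; here $R_{\mathcal{I}_X}$ is a regular local ring (a localization of the polynomial Cox ring at the prime $\mathcal{I}_X$) with maximal ideal $\mathfrak{m}_X$, and the multiplicity of such a principal quotient is the $\mathfrak{m}_X$-adic order of the generator. Thus $e_X Z = \mathrm{ord}_{\mathfrak{m}_X}(gw)$.

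Next I would use that $\mathrm{ord}_{\mathfrak{m}_X}$ is additive: the associated graded ring of the regular local ring $R_{\mathcal{I}_X}$ is a polynomial ring and hence a domain, so leading forms multiply and
\[ e_X Z = \mathrm{ord}_{\mathfrak{m}_X}(g) + \mathrm{ord}_{\mathfrak{m}_X}(w). \]
To evaluate the first term, note that $\Theta$ is the hypersurface cut out by a general member of the linear system defining $X$, so Lemma \ref{lemma:dimXSegreisXforIrVarInRandomPlane} gives $\{s(X,\Theta)\}_{\dim X} = [X]$, that is $e_X \Theta = 1$; since $e_X \Theta = \mathrm{ord}_{\mathfrak{m}_X}(g)$ by the same principal-quotient fact, we conclude $\mathrm{ord}_{\mathfrak{m}_X}(g) = 1$. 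Therefore $e_X Z = 1 + \mathrm{ord}_{\mathfrak{m}_X}(w)$, and so $e_X Z > 1$ if and only if $w \in \mathcal{I}_X$, i.e. if and only if $X \subset \Omega$.

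It then remains to trade $\Omega$ for $Y$. If $X \subset Y$ then every $\omega_j$ vanishes on $X$, so $w \in \mathcal{I}_X$ and $X \subset \Omega$ for any choice of the $\gamma_j$. If instead $X \not\subset Y$, some $\omega_{j_0} \notin \mathcal{I}_X$, so the $k$-linear map $(\gamma_j) \mapsto \big(\sum_j \gamma_j \omega_j \bmod \mathcal{I}_X\big)$ to $R/\mathcal{I}_X$ is nonzero; its kernel is a proper subspace, hence for general $\gamma_j$ we obtain $w \notin \mathcal{I}_X$ and $X \not\subset \Omega$. Thus for the general $\Omega$ in the statement, $X \subset \Omega$ if and only if $X \subset Y$, and combining with the previous paragraph gives $X \subset Y \iff e_X Z > 1$.

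The main obstacle is the local multiplicity bookkeeping in the identity $e_X Z = \mathrm{ord}(g) + \mathrm{ord}(w)$: one must confirm that $\mathcal{O}_{X,Z}$ really is a principal quotient of the regular local ring $R_{\mathcal{I}_X}$ so that the multiplicity-equals-order formula and its additivity are legitimate, and that general $\lambda_i, \gamma_j$ keep $g$ and $w$ nonzero, reduced, and without a common factor along $X$ so that $\mathbb{V}(gw)$ carries the intended structure. The conclusion $\mathrm{ord}(g) = 1$ from Lemma \ref{lemma:dimXSegreisXforIrVarInRandomPlane} already certifies that $g$ is reduced at the generic point of $X$, and a characteristic-zero Bertini argument certifies the analogous facts for $w$. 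A minor additional check is the boundary case $\dim X = \dim Y$, where $X$ may be a component of $Y$: there $X \subset Y$ still yields $w \in \mathcal{I}_X$ and hence $e_X Z \geq 2$, so it needs no separate argument.
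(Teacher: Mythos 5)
Your proof is correct, and it takes a genuinely different route from the paper's. The paper argues globally with Segre classes: $e_XZ$ decomposes as a sum of multiplicities over the irreducible components of $Z$ containing $X$, each summand is positive by Theorem \ref{thm:eX_Y_MultiProj}, the summand coming from $\Theta$ equals $1$ by Lemma \ref{lemma:dimXSegreisXforIrVarInRandomPlane}, so $e_XZ>1$ exactly when some component of $\Omega$ contains $X$; the closing step (generality of the $\gamma_j$ makes $X\subset\Omega$ equivalent to $X\subset Y$) is identical in both arguments. You instead localize at the generic point of $X$: since $\mathcal{O}_{X,Z}=R_{\mathcal{I}_X}/(gw)$ is a principal quotient of a regular local ring, its Samuel multiplicity is the order of $gw$, and additivity of order in the associated graded domain gives the sharper statement $e_XZ=1+\mathrm{ord}_{\mathfrak{m}_X}(w)$, from which the theorem is immediate. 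Your route buys an exact formula rather than a dichotomy, uses only elementary local algebra beyond Lemma \ref{lemma:dimXSegreisXforIrVarInRandomPlane}, and handles transparently the degenerate case in which $\Theta$ and $\Omega$ share a component: for instance $X=Y=\mathbb{V}(x)\subset\pp^2$ with both ideals generated by $x^2,xy,xz$, where $g=x\ell_1$ and $w=x\ell_2$ share the factor $x$ for \emph{every} choice of coefficients. In that case your Bertini claim (that $g,w$ have no common factor along $X$) is false, but harmlessly so --- the order computation is unaffected; what the example really shows is that the theorem requires reading $Z=\Theta\cup\Omega$ as the scheme $\mathbb{V}(gw)$, so that a shared component is counted twice, since with the reduced union $\mathbb{V}((g)\cap(w))$ one would get $e_XZ=1$ there despite $X\subset Y$. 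The paper's proof makes the same implicit choice when it writes $e_XZ=e_X\Theta+e_X\Omega$. The one debt in your argument is the identification of Definition \ref{def:eXY} (the coefficient of $[X]$ in $s(X,Z)$, which is the paper's official definition of $e_XZ$) with the Samuel multiplicity of the Cox-ring localization when $Z$ is reducible and non-reduced: the paper recalls this equivalence in the setting of an irreducible scheme, and extending it to pure-dimensional $Z$ needs additivity over components on both sides (Fulton's Lemma 4.2 for Segre classes and the associativity formula for multiplicities); this is at the same level of rigor as the paper's own citations, so it is an acceptable appeal rather than a gap.
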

\begin{proof}
For the forward direction, we know that $e_XZ$ is a sum taken over the irreducible components of $Z$ which contain $X$, and by Theorem \ref{thm:eX_Y_MultiProj} each summand must be positive. 
If $X \subset Y$, then $X \subset \Omega$, and by construction $X \subset \Theta$.
Then $e_X \Omega \geq 1$ and by Lemma \ref{lemma:dimXSegreisXforIrVarInRandomPlane}, $ e_X \Theta=1$, so
\[e_XZ = e_X \Omega+ e_X \Theta > 1.\] 

Now suppose that $e_XZ > 1$.
Then, since $e_X\Theta = 1$, we must have another component containing $X$.  That is, $X \subset \Omega$.
Since $\Omega$ is defined by a general $k$-linear combination of the $\omega_i$'s, each $\omega_i$ must vanish on $X$ and
we can conclude that $X \subset Y$.
\end{proof}

\begin{example}[Containment testing for subvarieties of a product of projective spaces] Work in $\pp_x^2\times\pp_y^2\times\pp_z^2$ with coordinate ring $R=k[x_0,x_1,x_2,y_0,y_1,y_2,z_0,z_1,z_2]$ graded by $A^1(\pp_x^2\times\pp_y^2\times\pp_z^2)$ with generators $h_x$, $h_y$, $h_z$. The irrelevant ideal is $B=\langle x_0,x_1,x_2\rangle \cdot \langle y_0,y_1,y_2 \rangle \cdot \langle z_0,z_1,z_2\rangle$. Let $F$ be a general polynomial of multidegree $h_x+h_y+h_z$ in $R$ and let \begin{align*}
&f_1=- 10x_2y_1z_0 + 2x_1y_2z_0 + 35x_2y_0z_1 - 7x_0y_2z_1 - 25x_1y_0z_2 + 25x_0y_1z_2\\
&f_2=9x_2y_1z_0 - 9x_1y_2z_0 - 4x_2y_0z_1 + 4x_0y_2z_1 + 3x_1y_0z_2 - 3x_0y_1z_2.
\end{align*}
Consider the varieties $X$ and $Y$ in $\pp_x^2 \times \pp_y^2 \times \pp_z^2$ defined by 
\begin{align*}
\mathcal{I}_X&= \langle y_0,y_1,y_2 \rangle \cdot \langle f_1, f_2 \rangle + \langle F \rangle, \\
\mathcal{I}_Y&=\left\langle z_0\cdot f_1-z_1\cdot f_2,F\right\rangle.
\end{align*} 
To test if $X$ is contained in $Y$ using Theorem \ref{thm:containmentGeneralIrrVar}, we define a general $k$-linear combination $p_\Omega$ of a $(h_x+h_y+2h_z)$-homogeneous set of equations defining $Y$, and a general $k$-linear combination $p_\Theta$ of a $(h_x+2h_y+2h_z)$-homogeneous set of equations defining $X$. Let $\Theta=\mathbb{V}(p_\Theta)$ and $\Omega=\mathbb{V}(p_\Omega)$ and define $Z=\Theta \cup \Omega$. We compute $e_XZ$ using these polynomials and the generators of $\mathcal{I}_X$ above in conjunction with Theorem \ref{thm:eX_Y_MultiProj}. In this case $e_XZ=2>1$, hence $X\subset Y$. To obtain the integer $e_XZ$ we must solve one zero-dimensional system defined by multidegree $h_x+2h_y+2h_z$ polynomials. 

Using classical methods, we would verify the ideal containment $\mathcal{I}_Y:B^\infty\subset \mathcal{I}_X:B^\infty$ using Gr\"obner bases. Note that in this case $\mathcal{I}_Y \not\subset \mathcal{I}_X$, and hence to get the correct answer regarding containment as subvarieties of $\pp_x^2\times\pp_y^2\times\pp_z^2$ we \textit{must} compute the $B$-saturated ideals. For this example computing $\mathcal{I}_Y:B^\infty\subset \mathcal{I}_X:B^\infty$ using Gr\"obner bases takes approximately $11.4$ seconds, while computing $e_XZ=2$ takes approximately $2.3$ seconds (both using Macaulay2). In particular for this example, even though we use the more complicated ideal defining $Z$, the improvement comes from the fact that the method of Theorem \ref{thm:eX_Y_MultiProj} `knows' the geometric structure of the ambient space.
\end{example}\begin{wrapfigure}{r}{0.58\textwidth}
\vspace{-1.3em}
\begin{center}
\includegraphics[scale=0.13]{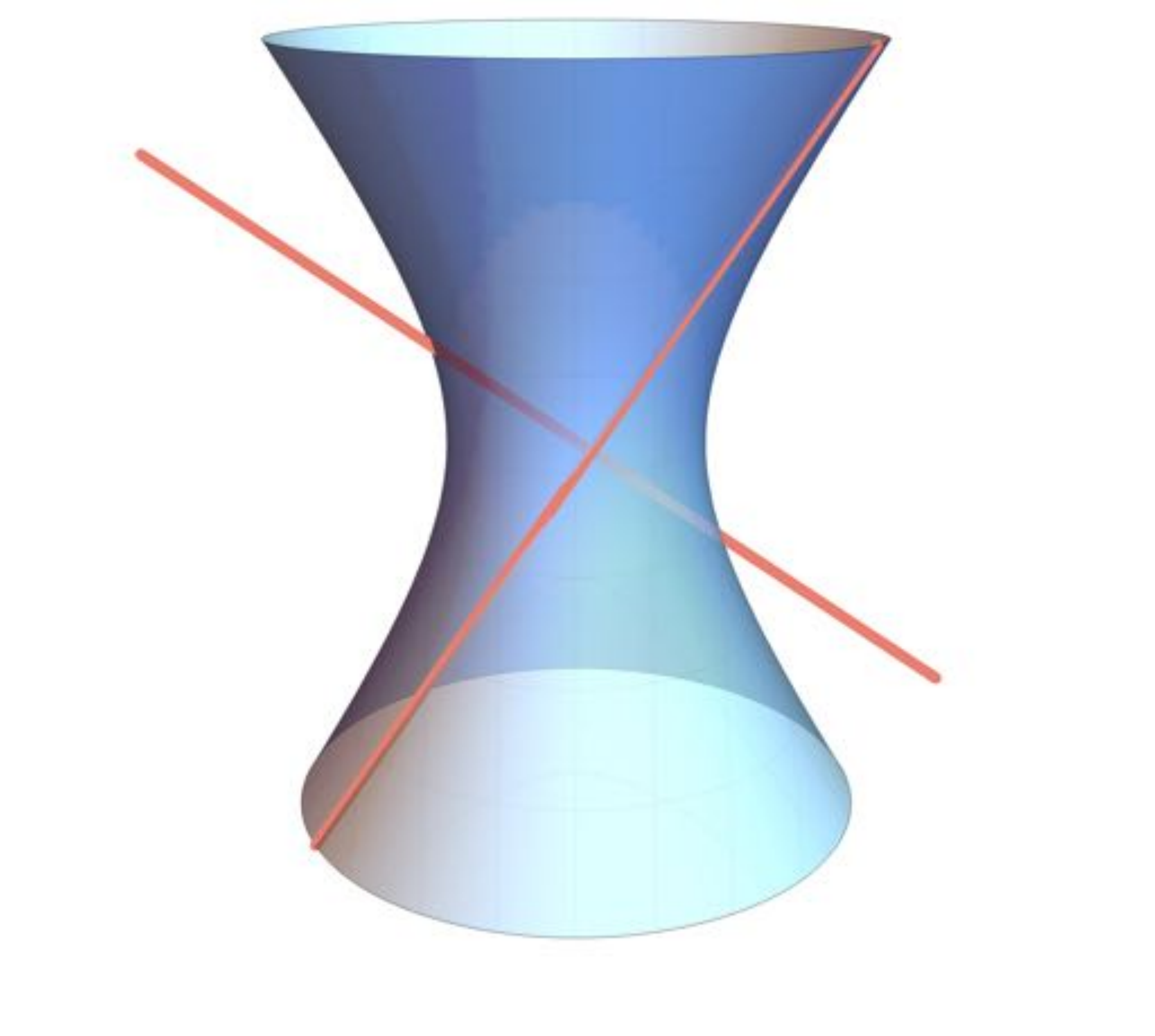}
\end{center}\caption{Two lines and a quadric surface.\label{fig:containment}}
\vspace{-2.5em}
 \end{wrapfigure} 

Now we consider the case where we are given an arbitrary pair of ideals.  What can the techniques of this section say about the containment of their associated algebraic sets?
In Figure \ref{fig:containment} we see a pair of lines, only one of which lies inside the quadric surface $Q$.  If $X = L_1 \cup L_2$ is the pair of lines, we cannot quite test whether $X \subset Q$.  Instead, as the next theorem shows, we can check if there exists $i$ such that $L_i \subset Q$.
\begin{theorem}
Let $X$ and $Y$ be arbitrary non-empty subschemes of $T_\Sigma$ with $Z = \Theta \cup \Omega$ defined as in Theorem \ref{thm:containmentGeneralIrrVar}.
Then a top-dimensional irreducible component $V$ of $X$ is contained in $Y$ if and only if 
\[
\Lambda_{a}(X,Z)\neq \Lambda_{a}(X,\Theta) 
\]
for some $a$ such that $|a|=\dim(X)$.
\label{thm:containmentGeneral}
\end{theorem}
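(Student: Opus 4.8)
The plan is to collapse the two-sided comparison $\Lambda_a(X,Z)$ versus $\Lambda_a(X,\Theta)$ into a single Segre-theoretic quantity attached to $\Omega$, and then to read off containment from its positivity. First I would note that $\Theta$ is cut out by a general $k$-linear combination of the generators of $X$, so $X\subset\Theta\subset Z$; hence both $\Lambda(X,Z)$ and $\Lambda(X,\Theta)$ are defined, and by the first step of the recursion in Theorem \ref{thm:MainSegreToric} (where the sum over $|i|>|a|$ is empty) their coefficients with $|a|=\dim(X)$ agree with those of $\{s(X,Z)\}_{\dim(X)}$ and $\{s(X,\Theta)\}_{\dim(X)}$. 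Writing $\Lambda_a(X,W)$ through \eqref{eq:Lambda} as the coefficient of $b^{n-a}$ in $\alpha^{\dim(W)-\dim(X)}[W]$ minus the projective degree $g_a(X,W)$, and using that $\Theta$, $\Omega$, $Z$ all have dimension $\dim(T_\Sigma)-1$, I would exploit that $Z=\Theta\cup\Omega$ is a union of two hypersurfaces. This gives $[Z]=[\Theta]+[\Omega]$ in $A^1(T_\Sigma)$ and, by a general-position argument (the finite sets $\Theta\cap L^a\cap W$ and $\Omega\cap L^a\cap W$ are disjoint, since $\Theta\cap\Omega$ has codimension two in $T_\Sigma$), the additivity $g_a(X,Z)=g_a(X,\Theta)+g_a(X,\Omega)$. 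Subtracting yields the clean identity
\[
\Lambda_a(X,Z)-\Lambda_a(X,\Theta)=\Lambda_a(X\cap\Omega,\Omega)\qquad(|a|=\dim(X)),
\]
so the theorem reduces to showing that $\{\Lambda(X\cap\Omega,\Omega)\}_{\dim(X)}\neq 0$ precisely when some top-dimensional component of $X$ lies in $\Omega$.

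Second, I would record that $V\subset\Omega$ is equivalent to $V\subset Y$ for an irreducible $V$: since $\Omega=\mathbb{V}(\sum\gamma_j\omega_j)$ with general $\gamma_j$, the form $\sum\gamma_j\omega_j$ vanishes on $V$ exactly when every $\omega_j$ does, i.e. when $V\subset Y$ (the same genericity used in the proof of Theorem \ref{thm:containmentGeneralIrrVar}). For the easy direction, suppose no top-dimensional component of $X$ lies in $\Omega$. Decomposing $\Omega$ into its (reduced, general) irreducible components $\Omega_1,\dots,\Omega_k$ and using additivity of projective degrees over components, each $\Omega_s$ then satisfies the hypotheses of Lemma \ref{propn:reducible_ProjDeg}, whence $\Lambda_a(X\cap\Omega_s,\Omega_s)=0$ for all $a$ with $|a|=\dim(X)$; summing gives $\Lambda_a(X\cap\Omega,\Omega)=0$ for all such $a$.

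The converse is the main obstacle, because it requires a non-cancellation statement. Suppose a top-dimensional component $V$ of $X$ lies in $\Omega$, hence in some $\Omega_s$. Decomposing over the components again, those $\Omega_s$ containing no top-dimensional component of $X$ contribute $0$ as above, while for each $\Omega_s$ that does contain some top-dimensional $V_i$ we have $\dim(X\cap\Omega_s)=\dim(X)$, so the first step of the recursion gives $\{\Lambda(X\cap\Omega_s,\Omega_s)\}_{\dim(X)}=\{s(X\cap\Omega_s,\Omega_s)\}_{\dim(X)}$, a genuine Segre class since $X\cap\Omega_s\subset\Omega_s$. By \cite[Example~4.3.4]{fulton2013intersection} (as in the proof of Proposition \ref{propn:SingLocusSubschemeZ}), this leading term is the effective cycle $\sum_i m_i\,e_{V_i}(\Omega_s)\,[V_i]$ over the top-dimensional $V_i\subset\Omega_s$, whose coefficients are products of a geometric multiplicity and a Samuel multiplicity, each $\geq 1$ and so strictly positive. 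Summing over $s$, the total $\{\Lambda(X\cap\Omega,\Omega)\}_{\dim(X)}=\sum_i\big(\sum_{s:\,V_i\subset\Omega_s}c_{s,i}\big)[V_i]$ is an effective cycle whose coefficient on the given $V$ is strictly positive.

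It then remains to see that a nonnegative, not-identically-zero combination of the distinct effective classes $[V_i]$ cannot vanish in $A_*(T_\Sigma)$: expanding each $[V_i]$ in the monomial basis $\{b^{n-a}\}$ produces nonnegative coefficients, so some $\Lambda_a(X\cap\Omega,\Omega)$ with $|a|=\dim(X)$ is strictly positive, hence nonzero. Combined with the displayed identity this gives $\Lambda_a(X,Z)\neq\Lambda_a(X,\Theta)$ for that $a$, completing the equivalence. I expect the genericity bookkeeping---that $\Theta$ and $\Omega$ share no components, that the $\Omega_s$ are reduced, and that $L^a$ and $W$ are general enough to justify both the additivity of projective degrees and the application of Lemma \ref{propn:reducible_ProjDeg}---to be the only delicate point beyond the positivity input from \cite{fulton2013intersection}.
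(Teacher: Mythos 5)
Your proposal follows essentially the same route as the paper's proof: both sides of the equivalence are analyzed by splitting $Z$ into $\Theta$ and $\Omega$, the components of $\Omega$ containing no top-dimensional component of $X$ are killed by Lemma \ref{propn:reducible_ProjDeg}, and non-cancellation comes from positivity of the leading term of a Segre class, i.e.\ from Samuel multiplicities via \cite[Example~4.3.4]{fulton2013intersection}. If anything, your write-up is more careful than the paper's in two respects: you track individual top-dimensional components of $X$ throughout (the paper's forward direction argues by contradiction from $X_{\rm red}\subset Y$, which is literally weaker than what the stated equivalence requires), and you make the mechanism explicit through the identity $\Lambda_a(X,Z)-\Lambda_a(X,\Theta)=\Lambda_a(X\cap\Omega,\Omega)$, where the paper instead passes through Corollary \ref{cor:SegreDimXToric} and additivity of Segre classes over the components of $Z$.

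Two points need repair, though neither is fatal. First, your final step claims that expanding an effective class $[V_i]$ in the monomial basis $\{b^{n-a}\}$ produces nonnegative coefficients. This is true in $\pp^{n_1}\times\cdots\times\pp^{n_m}$, where $\int h^{a}\cdot h^{n-a'}=\delta_{a,a'}$, so each coordinate is a count of intersection points with a general linear space; but for a general smooth projective toric $T_\Sigma$ the nef monomial basis need not be self-dual under the intersection pairing, so the coordinates of $[V_i]$ are pairings against classes that need not be nef, and nonnegativity is unjustified. Fortunately you do not need it: it suffices that the effective class $\sum_i c_i[V_i]$, with some $c_i>0$, is nonzero in $A_{\dim X}(T_\Sigma)$, which follows by pairing with $H^{\dim X}$ for an ample $H$, giving $\sum_i c_i\deg_H(V_i)>0$; since $\{b^{n-a}\}$ is a basis, a nonzero class has a nonzero coordinate, which is all the theorem requires (this is also the positivity assertion the paper itself makes without further comment). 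Second, the ``genericity'' that $\Theta$ and $\Omega$ share no component is not actually a matter of general choices: it fails precisely when $X$ and $Y$ have a common divisorial component, since such components are fixed components of both linear systems. Your additivity $g_a(X,Z)=g_a(X,\Theta)+g_a(X\cap\Omega,\Omega)$ survives this case because $Z$ is defined by the product of the two defining polynomials, so the geometric multiplicities of common components add and the projective degrees (which count points with geometric multiplicity) add accordingly, exactly as in the paper's proof of Proposition \ref{corr:LambdaDimXReducible}; but the justification should be stated this way rather than by disjointness of the two point sets.
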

\begin{proof}First suppose that $\Lambda_{a}(X,Z)= \Lambda_{a}(X,\Theta )$ for all $a$. This implies that $$\{s(X,Z)\}_{\dim(X)}=\{s(X,\Theta)\}_{\dim(X)}.$$ Suppose, for the sake of contradiction, that $ X_{\rm red}$ is contained in $Y$. Then it follows that $X_{\rm red}$ is also contained in $\Omega$. By Corollary \ref{cor:SegreDimXToric} we would then have that $$\{s(X,Z)\}_{\dim(X)}=\{s(X,\Omega)\}_{\dim(X)}+\{s(X,\Theta)\}_{\dim(X)},$$ but if $X_{\rm red}\subset \Omega$ and $X$ is non-empty then $ \{s(X,\Omega)\}_{\dim(X)}\neq 0$, which is a contradiction since $\{s(X,Z)\}_{\dim(X)}=\{s(X,\Theta)\}_{\dim(X)}$. Hence $X_{\rm red}$ is not contained in $Y$.

Now suppose that there exists an $a$ with $|a|=\dim(X)$ such that $\Lambda_{a}(X,Z)\neq \Lambda_{a}(X,\Theta) $. This implies that $\{s(X,Z)\}_{\dim(X)}\neq \{s(X,\Theta)\}_{\dim(X)}$. 
Hence by Corollary \ref{cor:SegreDimXToric} there must exist some irreducible component $V$ of $X$ such that $\dim(V)=\dim(X)$ and $V_{\rm red}$ is contained in some irreducible component of $Z$ other than $\Theta$.
So $V \subset\Omega$ and the proof proceeds as in the previous theorem.
\end{proof}
In the example below we demonstrate how the result of Theorem \ref{thm:containmentGeneral} can be applied to test if two different ideals have the same radical, without computing the radical. 
\begin{example}[Equality of radical ideals]
Work in $\pp^6$ with coordinate ring $R=k[x_0,\dots,x_6]$ and Chow ring $A^*(\pp^6)=\ZZ[h]/\langle h^7\rangle$. Let $f_1=x_2x_3x_5-5x_6^2x_0+3x_2x_0x_1$ and let $ f_2$ be a general homogeneous polynomial of degree three in $R$. Consider the variety $X$ defined by the ideal $\mathcal{I}_X=\langle f_1,f_2 \rangle$, and the irreducible scheme $Y$ defined by the ideal $\mathcal{I}_Y=\langle f_1^2,f_1f_2,f_2^2 \rangle$. We wish to verify that ${\mathcal{I}_X}=\sqrt{\mathcal{I}_Y}$, or equivalently that $X=Y_{\rm red}$ using Theorem \ref{thm:containmentGeneral}. Let $Z=\mathbb{V} (\lambda_1f_1^2+\lambda_2 f_1f_2+\lambda_3 f_2^2)\cup \mathbb{V}(\lambda_4f_1+\lambda_5 f_2) = \Omega \cup \Theta$ for general $\lambda_i\in k$. Using Theorem \ref{MainTheorem1_Multi_proj} we compute that
\[
\Lambda_{\dim(X)}(X,Z)=27 \quad \text{and}\quad \Lambda_{\dim(X)}(X,\Theta)=9.
\]
Hence by Theorem \ref{thm:containmentGeneral} we have that $X\subset Y$. 
Similarly
\[
\Lambda_{\dim(Y)}(Y,Z)=54 \quad \text{and} \quad \Lambda_{\dim(Y)}(Y,\Omega)=36,
\]
so $Y_{\rm red}\subset X$. It follows that $X=Y_{\rm red}$, and hence, that ${\mathcal{I}_X}=\sqrt{\mathcal{I}_Y}$. Note that we have implicitly used the fact that $X$ and $Y$ are irreducible (by construction). If we had only known a priori that $X$ was a variety, but had no knowledge of the structure of $Y$, we could have only concluded that an irreducible component of $Y$ was equal to $X$, or equivalently that $\mathcal{I}_X$ was a component of a primary decomposition of $\sqrt{\mathcal{I}_Y}$. 

The computation of all the values above using Theorem \ref{MainTheorem1_Multi_proj} takes approximately 2.4 seconds (in Macaulay2 \cite{M2}) and is performed using the ideals $\mathcal{I}_X$ and $\mathcal{I}_Y$ directly; that is we \textit{do not} compute the radical of $\mathcal{I}_Y$. On the other hand we could also verify that ${\mathcal{I}_X}=\sqrt{\mathcal{I}_Y}$ by computing these radicals directly and using Gr\"obner bases to check ideal equality. 
We stopped this computation after 1.5 hours.
In short, we learned geometric information about the varieties associated to ideals without computing radicals.
\end{example}

Finally we give a criterion using the techniques of \S\ref{sec:MainResults} for testing whether an ideal defines the empty set.
\begin{theorem}
Let $B$ be a possibly-empty subscheme of $\pp^n$ defined by the ideal $\mathcal{I}_B=\langle f_0,\dots, f_r\rangle$ and let $d=\max_i(\deg(f_i))$.
If $B = \emptyset$, then $g_0(B,\pp^n) = d^{n}$.
Moreover, $B = \emptyset$ if and only if
the projective degrees $g_i(B,\pp^n)=d^{n-i}$ for all $i$.
\end{theorem}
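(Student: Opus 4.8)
The plan is to reduce both assertions to the single formula relating the projective degrees of $B$ in $\pp^n$ to the Segre class $s(B,\pp^n)$, which already appears inside the proof of Theorem~\ref{MainTheorem2MultiProj}. After $\alpha$-homogenizing $\mathcal{I}_B$ so that every generator has degree $d=\max_i\deg(f_i)$, we have $\alpha=dh$ and $N=\dim\pp^n=n$, and since $\pp^n$ is a variety the intermediate relation in that proof specializes to
\begin{align*}
  g_j(B,\pp^n)
  &=\int (dh)^{n-j}h^{j}-\int (1+dh)^{n-j}h^{j}\frown s(B,\pp^n)\\
  &=d^{n-j}-\int (1+dh)^{n-j}h^{j}\frown s(B,\pp^n),
\end{align*}
using $\int h^{n}=1$. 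Thus each projective degree equals the B\'ezout value $d^{n-j}$ minus a correction governed entirely by $s(B,\pp^n)$, and both implications will follow by reading off this correction.

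For the forward direction, if $B=\emptyset$ then $s(B,\pp^n)=0$ (it lives in $A_*(\emptyset)=0$), so the correction vanishes for every $j$ and $g_j(B,\pp^n)=d^{n-j}$; in particular $g_0(B,\pp^n)=d^n$. I would also record the purely geometric reason: when $B=\emptyset$ the linear system $\langle f_0,\dots,f_r\rangle$ is base-point-free, so by Kleiman transversality \cite{kleiman1974transversality} a general $L^j$ together with $n-j$ general members $P_1,\dots,P_{n-j}$ meet in a reduced zero-dimensional scheme, whence $\deg(L^j\cap W - B)=\deg(L^j\cap W)=d^{n-j}$ by B\'ezout inside $L^j\cong\pp^{n-j}$.

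For the converse I argue by contrapositive. Suppose $B\neq\emptyset$ and set $\delta=\dim B\ge 0$. Expanding $(1+dh)^{n-\delta}$ and using $\int h^{n}=1$, only the dimension-$\delta$ component of $s(B,\pp^n)$ survives the integral $\int (1+dh)^{n-\delta}h^{\delta}\frown s(B,\pp^n)$, whose value is exactly the degree $s_\delta$ of that component. Since the dimension-$\delta$ part of $s(B,\pp^n)$ is the fundamental cycle of the top-dimensional part of $B$ (cf.\ the leading-term computation used in Lemma~\ref{lemma:dimXSegreisXforIrVarInRandomPlane} via Proposition~\ref{prop:SamualMultSingLocusCont}), it is a nonzero effective cycle, so $s_\delta>0$. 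Hence $g_\delta(B,\pp^n)=d^{n-\delta}-s_\delta<d^{n-\delta}$, and not all projective degrees equal $d^{n-j}$. Equivalently, one may observe that the equations $\int (1+dh)^{n-j}h^{j}\frown s(B,\pp^n)=0$ for all $j$ form an upper-triangular unipotent system in the coefficients of $s(B,\pp^n)$, hence force $s(B,\pp^n)=0$ and therefore $B=\emptyset$.

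The step I expect to require the most care is justifying that the dimension-$\delta$ part of $s(B,\pp^n)$ has strictly positive degree, i.e.\ that the leading Segre term of a nonempty (possibly non-reduced, reducible) subscheme is a nonzero effective cycle: for $B$ reduced this is immediate from $\{s(B,\pp^n)\}_{\dim B}=[B]$, and in general it follows from additivity of Segre classes over components together with positivity of the geometric multiplicities $\mathfrak{m}_i$ (see \cite[Ch.~4]{fulton2013intersection}). The remaining inputs---the B\'ezout/transversality count in the base-point-free case and the invertibility of the triangular system---are routine.
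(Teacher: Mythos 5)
Your proof is correct and is essentially the argument the paper intends: the paper's one-line proof simply points to the remark preceding \eqref{eq:Lambda} (if $B=\emptyset$ the linear system is base-point free, so $G(B,\pp^n)=\sum_{i}(dh)^{n-i}$), and your expansion via the relation $g_j(B,\pp^n)=d^{n-j}-\int (1+dh)^{n-j}h^{j}\frown s(B,\pp^n)$ from the proof of Theorem \ref{MainTheorem2MultiProj}, plus positivity of the leading Segre coefficient for the converse, is exactly the machinery that makes both directions of that remark precise. One intermediate claim should be corrected, though it does not affect validity: the dimension-$\delta$ part of $s(B,\pp^n)$ is \emph{not} the fundamental cycle of the top-dimensional part of $B$ in general; its coefficients are the algebraic multiplicities $e_{X_i}\pp^n\geq 1$ of Definition \ref{def:eXY} (Fulton, Ex.~4.3.4), which can exceed the geometric multiplicities---indeed the paper's own example $B=\mathbb{V}(x^3,xy,y^3)\subset\pp^2$ (see the remark after Lemma \ref{lemma:dimXSegreisXforIrVarInRandomPlane}) has $[B]=5h^2$ while the leading Segre term is $6h^2$. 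Since your argument only uses strict positivity $s_\delta>0$, which these multiplicities guarantee, the proof stands as written.
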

\begin{proof}
This follows immediately from the remark made before \eqref{eq:Lambda}.
\end{proof}
\begin{small}

\noindent
{\bf Acknowledgements.}
Martin was partially supported by the Independent Research Fund of Denmark during the preperation of this work.
Corey was partially supported by the Bergen Research Foundation project grant ``Algebraic and topological cycles in tropical and complex geometry.''
\end{small}
\bibliographystyle{alpha}
\bibliography{library}

\medskip
\medskip

\noindent
 {\bf Authors' addresses:}
\small

\smallskip

\noindent\\
{\bf Corey Harris}: \\
Department of Mathematics, University of Oslo\\
P.O.~Box 1053 Blindern, 0316 Oslo, Norway\\
{\em Email address}: {\tt charris@math.uio.no}\\

\smallskip

\noindent 
{\bf Martin Helmer} (\textit{Corresponding Author}): \\
Department of Mathematical Sciences, 
University of Copenhagen \\
Universitetsparken 5, 
DK-2100 Copenhagen, Denmark\\
{\em Email address}: {\tt martin.helmer2@gmail.com}\\
\end{document}